\newtheorem{theorem}{Theorem}[section]
\newtheorem{lemma}[theorem]{Lemma}
\newtheorem{proposition}[theorem]{Proposition}
\newtheorem{corollary}[theorem]{Corollary}
\theoremstyle{definition}
\theoremstyle{remark}
\theoremstyle{remark}
\numberwithin{equation}{section}
\newcommand{\R}{\mathbb{R}}
\newcommand{\eps}{{\varepsilon}}
\newcommand{\Lip}{{\mathrm{Lip}}}
\newcommand{\dist}{{\textup {dist}}}
\newcommand{\de}{\partial}
\newcommand{\weak}{{\rightharpoonup}\,}
\renewcommand{\d}{{\rm d}}
\newcommand{\Id}{{\rm Id}}
\newcommand{\spt}{{\rm spt}\,}
\newcommand{\loc}{\textup{loc}}
\newcommand{\gr}{\textup{G}}
\renewcommand{\and}{\quad \text{and} \quad}
\renewcommand{\div}{\textup{div}}
\newcommand{\cL}{{\mathcal{L}}}
\newcommand{\cH}{{\mathcal{H}}}
\newcommand{\cT}{{\mathcal{T}}}
\newcommand\N{{\mathbb N}}
\newcommand{\ie}{{\textit{i.e.}}}
\newcommand{\eg}{{\textit{e.g.}}}
\def\mc#1{\div\Big(\frac{\nabla {#1}}{\sqrt{1+|\nabla {#1}|^2}}\Big)}
\def\jac#1{\sqrt{1+|\nabla {#1}|^2}}
\title{How a minimal surface leaves a thin obstacle}
\author[M.~Focardi]{Matteo Focardi}
\address{DiMaI, Universit\`a degli Studi di Firenze}
\curraddr{Viale Morgagni 67/A, 50134 Firenze (Italy)}
\email{matteo.focardi@unifi.it}
\author[E.~Spadaro]{Emanuele Spadaro}
\address{La Sapienza Universit\`a di Roma}
\curraddr{Piazzale Aldo Moro 5, 00185 Roma (Italy)}
\email{emanuele.spadaro@mat.uniroma1.it}
\thanks{
E.~S.~has been supported by the ERC-STG Grant n. 759229
HiCoS
``Higher Co-dimension Singularities: Minimal Surfaces and 
the Thin Obstacle Problem''.
E.\ S. has been also partially supported
by the Gruppo Nazionale per l'Analisi Matematica, 
la Probabilit\`a e le loro Applicazioni (GNAMPA) of the 
Istituto Nazionale di 
Alta Matematica (INdAM) through a Visiting Professor Fellowship. 
E.~S.~is very grateful to the DiMaI ``U. Dini'' of the University
of Firenze for the support during the visiting period. M.~F.
is a member of the GNAMPA of INdAM.}
\subjclass[2010]{35R35, 49Q05}
\keywords{Minimal immersion, thin obstacle problem, free boundary, 
2-valued functions}
\date{}
\date{}
\begin{document}
\begin{abstract}
We prove the optimal regularity and a detailed analysis
of the free boundary of the solutions to the thin
obstacle problem for nonparametric minimal surfaces with
flat obstacles. 
\end{abstract}

\maketitle

%
%
\section{Introduction}\label{s:Intro}
The present note focuses on the analysis of
the thin obstacle problem for nonparametric minimal surfaces.
This is a well-known variational problem which has been
extensively considered in the literature,
cf.~the classical works by Nitsche \cite{Nitsche69}, 
Giusti \cite{Giu71, Giu72, GiuCIME, Giu74}, Kinderlehrer \cite{Kind71} and Frehse \cite{Freh75, Freh77}.
In this respect, the vast literature on thin obstacle problems
with quadratic energies, which correspond to the linearization
of the area functional, has to be taken into account. 
Starting with the pioneering contributions by Lewy \cite{Lewy70, Lewy72},
Richardson \cite{Rich78}, Caffarelli \cite{Caf79},
Kinderlehrer \cite{Kind81}, and Ural'tseva \cite{Ural85,Ural86,Ural87},
a renewed impulse towards a deeper understanding 
of the problem has started more recently with the works of Athanasopoulos and 
Caffarelli \cite{AtCa04}, Athanasopoulos, Caffarelli and Salsa 
\cite{AtCaSa08}, Caffarelli, Salsa and Silvestre \cite{CaSaSi08}
and has been then developed by many others
\cite{FoSp16, FoSp17, Guillen14, GaSm14, KRS17} 
etc\ldots\,
we warn the readers that this is only a small excerpt
from the literature on the topic.
To complete the overview on the topic we also mention the parametric approach to minimal surfaces with thin obstacles, which has been started by De~Giorgi (identifying the relaxation of the problem via the introduction of the nowadays called De~Giorgi's measure) and developed in the monograph by De~Giorgi, Colombini and Piccinini \cite{DeGColPic72}, and then in the papers by De~Giorgi \cite{DeG73} and by De~Acutis \cite{Mimma79}. Very recently it has been further extended by 
Fern\'andez-Real and Serra \cite{FeSe17}.

Despite the nonlinear thin obstacle problem naturally arises
in several applications and has attracted the attention
of distinguished mathematicians,
some of the most important questions concerning the
regularity of the solutions remained unsolved for many years.
For partial results in this regards, 
aside from the quoted papers by Nitsche, Giusti, Frehse and Ural'tseva
on nonlinear variational operators,
we mention the more recent contributions by Milakis and Silvestre
\cite{MiSi}, Fern\'andez-Real \cite{Fe16}, Ros-Oton and Serra 
\cite{ROSe} in the fully nonlinear case.

Building upon the works by Frehse \cite{Freh77} and
Ural'tseva \cite{Ural86} together with our previous work \cite{FoSp18}, 
in the present paper we give the first comprehensive analysis
in the relevant geometric setting of nonparametric minimal surfaces with 
thin obstacles, developing
an approach which can be further extended
to more general nonlinear operators.
For the sake of simplicity,
we confine ourselves to the following elementary
formulation of the thin obstacle problem 
for the nonparametric area functional:
given $g\in C^2(\R^{n+1})$
satisfying  $g\vert_{\R^n\times \{0\}} \geq 0$
and $g(x',x_{n+1}) = g(x', -x_{n+1})$, we consider the 
variational problem
\begin{equation}\label{e:ob prob}
\min_{v \in \mathscr{A}_{g}} \quad
\int_{B_1} \sqrt{1+|\nabla v|^2} \,\d x
\end{equation}
where $\mathscr{A}_{g}:=\big\{v\in g\vert_{B_1} + 
W^{1,\infty}_0(B_1):\; v\vert_{B_1'}\geq 0, \, v(x',x_{n+1}) = 
v(x',-x_{n+1})\big\}$.  Here $B_1'=B_1\cap\{x_{n+1}=0\}$,
in addition we set $B_1^+:=B_1\cap\{x_{n+1}>0\}$.
As reported right below, the assumption of flat obstacles allows to solve the problem in the space of Lipschitz functions, 
while for non-flat obstacle the right space to work with is that of functions of bounded variation. Part of the results of this paper 
can be generalized to non-flat and non-zero obstacles (see, e.g., the
techniques in our paper \cite{FoSp18} on the fractional obstacle problem), but at the best of our knowledge a complete analysis in the 
general case is still missing.

Existence and uniqueness of a solution $u$ in the class
$g\vert_{B_1} + W^{1,\infty}_0(B_1)$ has been established
by Giusti \cite{Giu71, Giu72, GiuCIME, Giu74}
(following the analysis of minimal surfaces with classical obstacles
by Giaquinta and Pepe \cite{GiaPe71} -- see also Giaquinta and Modica
\cite{GiaMo75}), showing
that $u$ can be characterized as the weak solution to the system:
\begin{gather}\label{e.signorini bc}
\begin{cases}
\mc{u} = 0 & \text{in }\; B_1^+,\\
\de_{n+1} u \leq 0 \quad\text{and}\quad
u\,\de_{n+1} u = 0 & \text{on }\; B_1'.
\end{cases}
\end{gather}
Lipschitz continuity for $u$ is the best possible global regularity
in $B_1$, as simple examples show. Nevertheless,
the solution is expected to be more regular on both sides of
the obstacle, thus leading to the investigation of
the one-sided regularity on $B_1^+\cup B_1'$.
This is a central question in understanding the qualitative
properties of the solutions to variational inequalities with
thin obstacles and several important results have been achieved
in the last decades.
The first contributions to this issue were given 
by H. Lewy in the two dimensional setting \cite{Lewy70,Lewy72}. 
Lately, continuity of the first derivatives of $u$ taken along 
tangential directions to $B_1'$ in any dimension and one-sided
continuity (up to $B_1'$) for the normal derivative in two dimensions
({\ie} $n=1$) were obtained by Frehse \cite{Freh75,Freh77}
for solutions to very general variational inequalities.
On the other hand, 
for the corresponding problem in the
uniformly elliptic setting, more refined results
on the one-sided regularity are available: in particular, 
the H\"older continuity of the derivatives, firstly established by Richardson \cite{Rich78} in dimension two and by 
Caffarelli \cite{Caf79} in any dimension, is shown by different proofs and in different degrees of generality, see
\cite{Kind81,Ural85,Ural86,Ural87,AtCa04,GaSm14,Guillen14,KRS17}
only to mention few references.

Despite all the mentioned recent achievements, 
for the geometric nonlinear
case of nonparametric minimal surfaces the $C^{1,\alpha}$
one-sided regularity of solutions was not known in general
(except for the two dimensional case
considered by Frehse \cite{Freh75} and more recently by 
Fern\'andez-Real and Serra \cite{FeSe17}\footnote{
After the appeareance of this manuscript, in the second version of the preprint \cite{FeSe17} the authors establish the almost optimal regularity in any dimension, proving that the solutions to the parametric thin obstacle problem for Caccioppoli sets are $C^{1,\sfrac12-\eps}$ regular for every $\eps>0$. This improvement gives a different proof of the non-optimal $C^{1,\sfrac12-\eps}$ regularity provided in this note.
}).
In this paper we establish the first result on the optimal $C^{1,\sfrac12}$ 
regularity (to the best of our knowledge there are no other examples
of optimal regularity for nonlinear variational inequalities with
thin obstacles) and we provide a detailed analysis of
the free boundary of the coincidence set.
Our approach is based on the pioneering analysis by Frehse
\cite{Freh75,Freh77}, by Uralt'seva \cite{Ural85, Ural86, Ural87}
and on our previous analysis of the Signorini problem
\cite{FoSp16, FoSp18}.
Starting from these results, we develop here a blowup analysis for
the study of nonparametric minimal surfaces with
thin obstacle, which can be further extended to other 
nonlinearities. In particular, we do not use the optimal regularity
for the scalar Signorini problem established in \cite{AtCa04}, 
but we can actually reprove it easily adapting the arguments of the present note.

The following is the main result of the paper 
(actually, more refined conclusions
will be shown, cf. the statement of Theorem~\ref{t:main precise}).

\begin{theorem}\label{t.main}
Let $u$ be a solution to the thin obstacle problem
\eqref{e:ob prob} and let $\Gamma(u)$ be its free boundary,
namely the boundary of $\{(x',0)\in B_1':\,u(x',0)=0\}$ in 
the relative topology of $B_1'$. Then, 
\begin{itemize}
\item[(i)] $u\in C^{1,\sfrac12}_{\loc}(B_1^+\cup B_1')$;
\item[(ii)] $\Gamma(u)$ has locally finite $(n-1)$-dimensional Hausdorff measure
and is $\cH^{n-1}$-rectifiable.
\end{itemize}
\end{theorem}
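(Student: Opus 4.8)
The plan is to run a blowup analysis built on an Almgren-type frequency function, adapted to the nonlinear minimal surface operator, following the strategy the authors cite from Frehse, Ural'tseva and their own work on the Signorini and fractional obstacle problems. First I would record the basic regularity already available: by Frehse and Giusti the solution $u$ is globally Lipschitz, the tangential derivatives $\partial_i u$ ($i\le n$) are continuous up to $B_1'$, and $u$ solves the system \eqref{e.signorini bc}. The key point is that the even extension $u$ satisfies, in $B_1$, a variational inequality for the operator $\mc{u}$, and that the error between this operator and the flat Laplacian is quadratically small in $\nabla u$: schematically $\mc{u}=\Delta u + E(\nabla u, D^2u)$ with $|E|\lesssim |\nabla u|^2|D^2u|$. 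Thus after subtracting the (smooth) harmonic-type correction coming from the boundary datum $g$, one reduces to studying a function that, on the scale of small balls, behaves like a solution of the scalar Signorini problem up to a perturbation that is negligible at the relevant frequency.

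Next I would establish the monotonicity, or almost-monotonicity, of the frequency function
\[
N(r,u)=\frac{r\int_{B_r}|\nabla u|^2}{\int_{\partial B_r}u^2},
\]
modulo lower-order terms controlled by the $C^2$ norm of $g$ and by the quadratic error $E$; this is where the argument is genuinely delicate, since one must show that the nonlinear and inhomogeneous contributions to $\tfrac{d}{dr}\log N$ are integrable in $r$, so that $N(0^+,u)$ exists at every free boundary point. One then classifies the admissible frequencies. Exactly as in the scalar Signorini problem, the possible values are $\tfrac32$ and $2m$ for $m\in\N$, the minimal homogeneity being $\tfrac32$; the half-dimensional $\tfrac32$-homogeneous global solutions are the classical $\mathrm{Re}((x_1+i|x_{n+1}|)^{3/2})$-type profiles. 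The compactness needed to pass to blowup limits uses the Lipschitz bound and the quadratic smallness of $E$, which guarantees that rescalings $u_r(x)=u(rx)/(\,r^{-n/2}\|u\|_{L^2(\partial B_r)})$ converge to a solution of the \emph{linear} Signorini problem; the nonlinearity disappears in the limit because $|\nabla u_r|\to 0$ pointwise (as $u$ is $C^1$ with $\nabla u(0)=0$ at a free boundary point after subtracting $g$).

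From the frequency gap $\tfrac32<2$ one gets (i): at every free boundary point the blowup is $\tfrac32$-homogeneous, which by a standard iteration of the frequency estimate and the almost-monotonicity upgrades to the decay $\|u\|_{L^\infty(B_r)}\lesssim r^{3/2}$, and combined with the already-known continuity of the tangential gradient and Schauder estimates for $\mc{\cdot}$ on $B_r^+$ this yields $u\in C^{1,1/2}_{\loc}(B_1^+\cup B_1')$; the optimality is witnessed by the explicit $\tfrac32$-homogeneous solution. For (ii) I would split the free boundary into the set $\Gamma_{3/2}$ of points of frequency $\tfrac32$ (the ``regular'' part) and the remaining points $\Gamma_{2m}$ of even frequency. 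On $\Gamma_{3/2}$, upper semicontinuity of $N(0^+,\cdot)$ together with a nondegeneracy estimate ($\|u\|_{L^\infty(B_r)}\gtrsim r^{3/2}$, from the classification of blowups and the almost-monotonicity of a suitable Weiss-type energy) makes $\Gamma_{3/2}$ relatively open, and an implicit function / Weiss-monotonicity argument shows it is locally a $C^{1}$ (indeed $C^{1,\alpha}$) $(n-1)$-manifold, hence locally $\cH^{n-1}$-finite and rectifiable. For the even-frequency part one invokes the dimension-reduction plus covering argument à la Almgren--Federer, together with the stratification by the dimension of the spine of homogeneous blowups, which gives that each stratum has locally finite $\cH^{n-1}$ measure and is $\cH^{n-1}$-rectifiable; this uses only the existence and homogeneity of blowups and the Whitney-type covering, both of which survive the nonlinear perturbation because of the almost-monotonicity. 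Assembling the pieces gives $\Gamma(u)=\Gamma_{3/2}\cup\bigcup_m\Gamma_{2m}$ with the asserted measure and rectifiability properties.

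The main obstacle, as indicated, is the almost-monotonicity of the frequency (and of the auxiliary Weiss and Monneau-type quantities) in the presence of the genuinely nonlinear operator $\mc{\cdot}$ and the inhomogeneity $g$: one must carefully track the error terms, show they are summable in dyadic scales, and verify that they do not shift the admissible frequencies away from the scalar Signorini values — in particular that no frequency between $\tfrac32$ and $2$ can appear. Everything else (compactness, classification of linear blowups, the Federer dimension-reduction, the regular-set analysis) is by now standard once this quantitative comparison with the linear problem is in place.
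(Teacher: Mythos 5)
Your proposal contains several genuine gaps relative to the paper's argument, the most serious of which concerns optimal regularity.

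\textbf{Optimal $C^{1,\sfrac12}$ from the frequency gap.} You propose to obtain (i) from the lower bound $N(0^+)\geq \tfrac32$ on the frequency plus the classification of blowups, leading to $\|u\|_{L^\infty(B_r)}\lesssim r^{3/2}$ at free boundary points and, via Schauder estimates, to $C^{1,\sfrac12}$. This does not close: the $r^{3/2}$-decay yields $C^{1,\sfrac12-\eps}_{\loc}$ for every $\eps>0$ but not the endpoint exponent. Reaching $\sfrac12$ exactly requires either an epiperimetric-type inequality quantifying the rate of convergence to the $\sfrac32$-homogeneous profile at \emph{every} point (which is delicate and not available off the ``regular'' part of the free boundary), or an entirely different mechanism. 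The paper uses the latter: it observes in Section~\ref{s:optimal} (Proposition~\ref{p:ob vs 2-val}) that the two-valued map $U=\{u,-u\}$ has a stationary graph for the area functional, which is an exact identity and not a linearization, and then invokes the Simon--Wickramasekera theorem on $C^{1,\sfrac12}$-regularity of two-valued minimal graphs. This is the essential step, and it is absent from your outline. The paper explicitly warns that the almost-optimal $C^{1,\sfrac12-\eps}$ was also obtained by a different method (Fern\'andez--Real and Serra), so the endpoint was precisely the open question.

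\textbf{Classification of admissible frequencies.} You assert the frequencies at free boundary points are $\tfrac32$ and $2m$. This is incorrect: as stated in Theorem~\ref{t:main precise}, the classification on the good set is $I_u(x_0,0^+)\in\{2m,\,2m-\sfrac12,\,2m+1\}$ (the three families $\Phi_m$, $\Psi_m$, $\Pi_m$), with an exceptional set $\Sigma(u)$ of Hausdorff dimension at most $n-2$ where the frequency can take other values. The odd integers $2m+1$ come from the $\Pi_m$ profiles and cannot be excluded.

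\textbf{Rectifiability.} You propose a Federer--Almgren dimension-reduction argument for (ii). Dimension reduction gives bounds on the Hausdorff \emph{dimension} of the strata but does not by itself yield local finiteness of $\cH^{n-1}$-measure, and certainly not $\cH^{n-1}$-rectifiability, of the full free boundary. The paper instead establishes a quantitative control of the mean flatness $\beta_\mu$ of the free boundary in terms of the drop of the frequency (Section~\ref{s:mean-flatness}), a rigidity statement for almost-homogeneous solutions (Section~\ref{s:rigidity}), and then feeds these into the Naber--Valtorta discrete Reifenberg theorem and the Azzam--Tolsa rectifiability criterion. These are the tools that actually deliver (ii); they have no counterpart in your outline.

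\textbf{Frequency function setup.} You treat $\mc{u}$ as $\Delta u$ plus a quadratically small error and work with the standard Almgren frequency. The paper's implementation is different and cleaner: exploiting the already proved $C^{1,\sfrac12}$ regularity, it shows that $\vartheta=(1+|\nabla u|^2)^{-\sfrac12}$ is a Lipschitz function (Lemma~\ref{l:thetalip}), so that $u$ minimizes the thin obstacle problem for a uniformly elliptic quadratic energy with Lipschitz coefficients. The frequency is then weighted by $\vartheta$ and almost-monotonicity follows from this structure (Proposition~\ref{p:monotonia+lower}), rather than from controlling a quadratic perturbation of the Laplacian. Note this step \emph{uses} the optimal regularity and cannot precede it, so the logical order in your proposal, which derives regularity from the frequency and the frequency from a perturbation estimate, does not match the paper's architecture and would require a genuinely different source of almost-monotonicity.
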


More in details, concerning the proof of the results we
proceed in several steps.
Complementing Frehse's result \cite{Freh77}, 
we establish first in Section~\ref{s:regolarita'} the one-sided 
$C^1$-smoothness of the normal derivative of the solution $u$.
Then, we show the  H\"older continuity of the first derivatives (one-sided for the normal one) in Section~\ref{s:C1alpha}. 
In doing this we use a penalization argument together with the celebrated De Giorgi's method to prove H\"older regularity,
following the approach outlined by Ural'tseva \cite{Ural86} 
in the strongly elliptic case.  
Optimal regularity then follows by an interesting connection 
with the theory of minimal surfaces highlighted in 
Section~\ref{s:optimal}. More precisely, we show that 
solutions to the thin obstacle problem for the area functional
correspond to two-valued minimal graphs. Given this, we can exploit
the recent results by Simon and Wickramasekera \cite{SiWi16} 
to infer the optimal one-sided $C^{1,\sfrac12}$ regularity.
This association links thin obstacle problems with the program started by Krummel and Wickramasekera \cite{KW13} about the regularity of multiple valued solutions to the minimal surface system. In this regards, the results in \cite{KW13} are mostly concerned with the regularity of harmonic multiple valued functions (see also \cite{DS-memo, FMS-15, DMSV17} for more other results), while their extension to the minimal surface system are not yet known: further investigations in this direction are needed to extend the approach developed here and in \cite{DMSV17,FoGeSp15,FoSp17} to prove the regularity of multiple valued minimal graphs.

In the last section of the paper, we consider
the free boundary analysis, i.e.
the study of the measure theoretic and geometric properties 
of the \textit{free boundary} set $\Gamma(u)$, defined as
the topological boundary in the relative topology of 
$B_1'$ of the \textit{coincidence set} $\Lambda(u) = 
\big\{(x', 0) \in B_1'\,: u(x', 0) =0 \big\}$.
In this respect we follow our recent paper on the Signorini
problem for the fractional Laplacian \cite{FoSp17,FoSp17bis,FoSp18}
and show the $\mathcal{H}^{n-1}$-rectifiability of the free boundary
and the local finiteness of its Hausdorff measure (actually of its
Minkowski content).
In Section~\ref{s:freebdry} we provide the essential key tools
to follow the strategy developed in \cite{FoSp17,FoSp18}. In particular, 
we prove a quasi-monotonicity formula for the Almgren's type
frequency function 
\begin{equation}\label{e:freq intro}
I_u(x_0,r) := 
\frac{r \int \phi\big(\textstyle{\frac{|x-x_0|}{r}}\big)\frac{|\nabla u|^2}{\sqrt{1+|\nabla u|^2}}\d x}
{- \int \phi'\Big(\textstyle{\frac{|x-x_0|}{r}}\Big)\,\frac{1}{|x-x_0|}\frac{u^2}{\sqrt{1+|\nabla u|^2}}\,\d x}
\end{equation}
for $r<1-|x_0|$ and $x_0\in B_1'$ (see Section~\ref{s:frequency}
for the definition of the auxiliary function $\phi$ and the details).

%
%
\section{Preliminaries}\label{s.preliminaries}
Throughout the paper we use the following notation:
for any subset $E \subset \R^{n+1}$ we set
\[
E^\pm := E\cap\big\{x\in\R^{n+1}:\,\pm x_{n+1}>
0\big\}
\and
E' := E \cap \big\{x_{n+1}=0\big\}.
\]
For $x \in \R^{n+1}$ we write $x = (x', x_{n+1})\in \R^{n} 
\times \R$ and
$B_r(x) \subset \R^{n+1}$ denotes the open ball centered
at $x\in \R^{n+1}$ with radius $r>0$ 
(we omit to write the point $x$ if the origin and, 
when there is no source of ambiguity, 
we write $x'$ for the point $(x',0)$).

In what follows we shall use the terminology \emph{solution of the thin obstacle problem}
for a minimizer $u$ of the area funtional on $B_1^+$ 
with respect to its own boundary conditions and additionally satisfying the unilateral obstacle constraint $u|_{B_1'}\geq 0$.

We recall the following two results which will be used in the sequel.
\begin{proposition}\label{p.comparison}
Let $u$ and $v\in W^{1, \infty}(B_1)$ be two solutions to the thin obstacle
problem. 
If $u\vert_{\de B_1} \leq v\vert_{\de B_1}$, then $u\leq v$ on $\bar{B_1}$.
\end{proposition}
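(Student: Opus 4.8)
The plan is to argue by contradiction using the standard penalization/variational comparison argument for the minimal surface operator, respecting the thin obstacle. Suppose the set $\{u>v\}\cap \bar B_1$ is nonempty. Since $u\le v$ on $\partial B_1$, the open set $\Omega:=\{u>v\}$ is compactly contained in $B_1$. The key observation is that on $\Omega$ we have $u>v\ge 0$ on $\Omega'=\Omega\cap B_1'$, so by the complementarity condition in \eqref{e.signorini bc} applied to $u$ we get $\partial_{n+1}u=0$ on $\Omega'$; hence $u$ solves the minimal surface equation $\mc{u}=0$ weakly across $\Omega'$, i.e. in all of $\Omega$ (using the even symmetry to reflect). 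Similarly, on $\Omega'$ we have $v>0$ is false in general — but we only need the \emph{inequality} $\partial_{n+1}v\le 0$ on $\Omega'$ from \eqref{e.signorini bc}, which says $v$ is a weak supersolution of the minimal surface equation on $\Omega$ (again using symmetry: the reflected function has a nonpositive distributional Laplacian-type contribution along $\Omega'$).

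With this in hand, I would test the weak formulations against $\varphi:=(u-v)^+$, which lies in $W^{1,\infty}_0(\Omega)$ and is admissible. Writing $a(p):=p/\sqrt{1+|p|^2}$ for the (monotone, strictly so) vector field, the equation for $u$ and the supersolution inequality for $v$ give
\begin{equation*}
\int_{\Omega}\big(a(\nabla u)-a(\nabla v)\big)\cdot\nabla(u-v)^+\,\d x\le 0.
\end{equation*}
On the set $\{u>v\}$ we have $\nabla(u-v)^+=\nabla u-\nabla v$, and by strict monotonicity of $a$ the integrand $\big(a(\nabla u)-a(\nabla v)\big)\cdot(\nabla u-\nabla v)$ is nonnegative and vanishes only where $\nabla u=\nabla v$. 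Hence $\nabla u=\nabla v$ a.e. on $\{u>v\}$, so $\nabla(u-v)^+\equiv 0$, and since $(u-v)^+\in W^{1,\infty}_0(\Omega)$ it must vanish identically, contradicting $\Omega\ne\emptyset$. This proves $u\le v$ on $\bar B_1$.

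The main obstacle — and the point deserving care — is the passage from the mixed Signorini conditions to clean (super)solution statements \emph{across} the thin set $B_1'$: one must justify that, for the even-in-$x_{n+1}$ extension, the variational inequality satisfied by $u$ on $B_1^+$ together with $\partial_{n+1}u\le 0$ (resp.\ $=0$ on $\{u>0\}$) on $B_1'$ translates into the global distributional (in)equality against nonnegative test functions on $B_1$; this is where the sign condition $g|_{B_1'}\ge 0$ and the structure of $\mathscr A_g$ enter, and it is cleanest to phrase everything through the variational inequality
\begin{equation*}
\int_{B_1}a(\nabla u)\cdot\nabla(w-u)\,\d x\ge 0\qquad\text{for all }w\in\mathscr A_g\text{ with }w\ge u\text{ on }B_1',
\end{equation*}
choosing $w=u-(u-v)^+$ as competitor (legitimate since $(u-v)^+\ge 0$ means $w\le u$, but on $B_1'$ one has $w=\min(u,v)\ge 0$ because $u,v\ge 0$ there, so $w\in\mathscr A_g$), and symmetrically for $v$ with competitor $v+(u-v)^+$. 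Subtracting the two inequalities yields exactly the displayed monotonicity inequality above, and the rest is as described. A secondary (routine) point is checking admissibility of these competitors, i.e.\ that they share the boundary datum $g$ on $\partial B_1$ — which holds because $(u-v)^+$ has compact support in $B_1$.
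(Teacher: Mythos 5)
Your proof is correct. The paper itself does not spell out the argument; it simply cites the comparison principle in Giusti's book (Chapter 1, Lemma 1.1 of \cite{Giusti03}), which is precisely the lattice/truncation argument for convex variational problems. That lemma runs on the identity $F(u\wedge v)+F(u\vee v)=F(u)+F(v)$ for integrands depending only on the gradient, combined with uniqueness of minimizers, whereas you run the equivalent first-order version: plug the competitors $\min(u,v)\in\mathscr A_{g_u}$ and $\max(u,v)\in\mathscr A_{g_v}$ into the variational inequalities for $u$ and $v$, subtract, and use strict monotonicity of $p\mapsto p/\sqrt{1+|p|^2}$ on bounded sets. These are two standard faces of the same argument, so I would call your route essentially the one the paper invokes, just written out.

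Two small remarks. First, your displayed variational inequality states the test class as ``$w\in\mathscr A_g$ with $w\ge u$ on $B_1'$''; this is a slip --- the admissible class is simply $\mathscr A_g$, i.e.\ $w\ge 0$ on $B_1'$ --- but you then verify exactly that $\min(u,v)\ge 0$ on $B_1'$, so the substance is right. Second, the first paragraph (even reflection, $u$ solving the minimal surface equation across $\Omega'$ because $u>0$ there, $v$ a distributional supersolution because $\partial_{n+1}v\le 0$) is also a viable route, but it requires justifying the distributional inequality across the thin set; as you yourself observe, it is cleaner to bypass this and argue directly from the variational inequalities, which is what your second paragraph does and what actually carries the proof.
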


The proof is a direct consequence of the comparison principle for
minimal surfaces (cf. \cite[Chapter 1, Lemma 1.1]{Giusti03}).

The second result we need is due to Frehse \cite{Freh77}.
In order to state it, we introduce the following
general formulation: let
$F:\R^{n+1}\times \R\times \R^{n+1} \to \R$ be a smooth function
(we denote its variables by $(x,z, p)$) and consider the corresponding functional
\[
\mathcal{F}(u):=\int_{B_1} F(x,u(x), \nabla u(x))\, \d x.
\]
We assume that the Hessian matrix $\big(\frac{\de^2 F}{\de p_i\de p_j}\big)_{i,j=1, \ldots, n+1}$ of $F$ 
is uniformly elliptic ({\ie} uniformly positive definite) and bounded.
The thin obstacle problem related to $F$ is then obtained by minimizing $\mathcal{F}$ among all functions in $\mathcal{A}_g$.

\begin{theorem}[\cite{Freh77}]\label{t.frehse}
Under the assumptions above on $F$, the Lipschitz solutions $u$ to the 
corresponding thin obstacle problems satisfy: 
\begin{itemize}
\item[(i)] if $n=1$, then $u \in C^1(B_1^+\cup B_1')$ with
\[
|\nabla u (x) - \nabla u(y)| \leq \omega_0(|x-y|) \quad \forall\;
x,y\in B_1^+\cup B_1'
\]
where $\omega_0(t)= C|\log t|^{-q}$ with $q\geq 0$ is any constant and $C>0$;

\item[(ii)] if $n\geq 2$, then the tangential derivatives 
$\de_i u \in C^0(B_1^+\cup B_1')$ for $i \in\{1, \ldots, n\}$ with
\[
|\de_i u (x) - \de_i u(y)| \leq \omega_1(|x-y|) \quad \forall\;
x,y\in B_1^+\cup B_1',
\]
where $\omega_1(t) = C|\log t|^{-q(n)}$ with $q(n) \in (0, \frac{2}{(n+1)^2-2n-2})$
and $C>0$.
\end{itemize}
\end{theorem}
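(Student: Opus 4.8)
We would prove Theorem~\ref{t.frehse} by combining interior elliptic regularity for the linearised operator with an even-reflection reformulation of the Signorini conditions and a De~Giorgi-type energy iteration that produces the (logarithmic) modulus of continuity.

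\emph{Setup and reduction to $B_1'$.} Since only symmetric competitors are admissible we may assume $F$ even in $(x_{n+1},p_{n+1})$, so that, $u$ being even and Lipschitz, the linearised operator $\mathcal Lv:=\div\!\big(A(x)\nabla v\big)$, with $A(x):=D^2_pF\big(x,u(x),\nabla u(x)\big)$, is uniformly elliptic with bounded measurable coefficients and $u$ solves $\mathcal Lu=0$ in $B_1^+\cup B_1^-$. De~Giorgi--Nash--Moser theory together with Schauder bootstrapping give $u\in C^\infty(B_1^+)$, so the whole issue is the behaviour of $\nabla u$ as one approaches $B_1'$. On the relatively open non-contact set $\{u>0\}\cap B_1'$ the Signorini conditions force $\de_{n+1}u\equiv0$, so the even reflection of $u$ solves an equation of the same type across that part of $B_1'$ (the mixed co-normal coefficients of the reflected operator vanish on $\{x_{n+1}=0\}$) and is smooth there; on the interior of the contact set $u$ vanishes, and one checks that the tangential derivatives extend continuously by $0$. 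Hence the only delicate region is a neighbourhood of the free boundary $\Gamma(u)$.

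\emph{Tangential derivatives and reflection.} Fix $i\le n$ and set $w:=\de_iu$ (equivalently, work with the tangential difference quotients $\tau_{h,i}u$, which solve uniformly elliptic equations of the same structure). Then $\mathcal Lw=0$ in $B_1^+$, $w$ is bounded and even in $x_{n+1}$, and --- the key point --- the positive and negative parts $w^{\pm}$ extend by even reflection to \emph{non-negative, bounded subsolutions} $\widetilde{w^{\pm}}$ of (the reflected) $\mathcal L$ in the whole ball $B_1$: on $\{u>0\}\cap B_1'$ one has $\de_{n+1}w=\de_i\de_{n+1}u=0$, so the reflected functions have zero co-normal derivative there; on the contact set $w^{\pm}$ vanish and, since $w^{\pm}\ge0$, the induced ``crease'' along $B_1'$ has the favourable sign (here the sign constraint is essential). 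In addition $w^{+}w^{-}\equiv0$.

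\emph{Oscillation decay and the conclusion.} Applying the one-sided De~Giorgi estimates for subsolutions (local maximum principle, oscillation lemma) to $\widetilde{w^{\pm}}$ and using the complementarity $w^{+}w^{-}=0$ to feed, at every scale, the smallness of one part into a decay estimate for $w=w^{+}-w^{-}$, one iterates on the oscillation of $w$ over the half-balls $B_r^+(x_0)$, $x_0\in B_1'$. Because for a general $F$ the coefficients $A(x)$ are merely bounded measurable, only purely energetic estimates are available and no geometric (Hölder) decay can be expected; instead the gain degrades at each step and, after summing the series, one is left with $\operatorname{osc}_{B_r^+(x_0)}w\le C\,|\log r|^{-q}$, with $q>0$ governed by the ellipticity ratio and by the Sobolev exponent of the ambient dimension $n+1$ --- which is consistent with the threshold $q(n)<\tfrac{2}{n^{2}-1}$ in (ii) and with the fact that only a logarithmic, not a Hölder, modulus survives. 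When $n=1$ the free boundary is a discrete set of points, near each of which $B_1'$ splits into one arc of contact and one of non-contact; analysing $u$ there as the solution of a mixed Dirichlet--Neumann problem in the half-disk, and using in addition the sign constraints $u\ge0$ and $\de_{n+1}u\le0$, one upgrades the conclusion to continuity of the \emph{normal} derivative as well (again with a logarithmic modulus), giving the full $C^1$ statement in (i).

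\emph{Main obstacle.} The crux is the last step, i.e.\ quantifying the interplay of $w^{+}$ and $w^{-}$ near $\Gamma(u)$: the boundary condition obeyed by $w$ switches between Dirichlet ($w=0$ on the contact set) and Neumann ($\de_\nu w=0$ off it) along an a priori completely uncontrolled interface, so a modulus of continuity must be extracted purely from one-sided (sub/supersolution) estimates together with $w^{+}w^{-}=0$, all the while tracking the dependence of the constants on the dimension carefully enough to reach the threshold in (ii).
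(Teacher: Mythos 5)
Note first that the paper does not prove this statement: Theorem~\ref{t.frehse} is quoted directly from Frehse \cite{Freh77} and used as a black box in what follows (e.g.\ in the proofs of Lemmas~\ref{l.barrier} and \ref{l.blowup} and of Proposition~\ref{p.C1}). So there is no internal proof to compare your proposal against; I can only assess it as a reconstruction of Frehse's argument.

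As such a reconstruction your sketch has the right flavour — linearise, pass to tangential derivatives/difference quotients, exploit the complementary Dirichlet/Neumann conditions they inherit on $B_1'$, reflect across $\{x_{n+1}=0\}$, and accept a logarithmic rather than H\"older modulus — but two points would have to be repaired before it could be called a proof. First, for $n=1$ you assert that $\Gamma(u)$ is a discrete set of points, so that near any free boundary point $B_1'$ locally splits into one contact arc and one non-contact arc; this is not available a priori (the contact set is merely closed, and $\Gamma(u)$ could in principle be Cantor-like), and Frehse's two-dimensional argument is set up precisely so as \emph{not} to presuppose any topological structure on $\Gamma(u)$. Second, your explanation of why only a logarithmic modulus is reachable — that the coefficients $A(x)$ are merely bounded and measurable, ``so no geometric (H\"older) decay can be expected'' — is not correct: De~Giorgi--Nash--Moser theory does produce genuine H\"older decay for sub/supersolutions with bounded measurable coefficients, so the roughness of $A$ is not the limiting factor. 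The logarithm comes instead from a Widman hole-filling estimate run in the presence of the uncontrolled Dirichlet/Neumann interface on $B_1'$, i.e.\ precisely the difficulty you isolate under ``Main obstacle''; you identify the right issue at the end but misattribute the cause earlier, and the quantitative step converting the hole-filling gain into the explicit threshold $q(n)<\tfrac{2}{n^2-1}$ (which you correctly rewrite from $\tfrac{2}{(n+1)^2-2n-2}$) is asserted rather than derived.
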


%
%
\section{$C^{1}$ regularity}\label{s:regolarita'}
The existence, uniqueness and the Lipschitz 
regularity of the solutions
to the variational problem \eqref{e:ob prob} have been studied in 
\cite{Giu71,Giu72,GiuCIME}.
In this section we show that the solutions to the thin 
obstacle problem have one-sided continuous derivative.
In two dimension, this result is due to Frehse \cite{Freh77}
for general nonlinear variational inequalities.
In higher dimensions, this is not known in this generality
and here we provide a proof for the specific case
of the area functional.

\begin{proposition}\label{p.C1}
Let $u\in W^{1,\infty}(B_1)$ be a solution to the thin
obstacle problem. Then, $u
\in C^1(B_1^+\cup B_1')$.
\end{proposition}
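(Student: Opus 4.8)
The plan is to upgrade the tangential regularity from Frehse's Theorem~\ref{t.frehse} to full $C^1$ regularity up to $B_1'$ by obtaining continuity of the normal derivative $\de_{n+1}u$. The key observation is that, by the symmetry $u(x',x_{n+1})=u(x',-x_{n+1})$ and the interior minimal surface equation $\mc{u}=0$ in $B_1^+$, the solution is a classical (analytic) solution of the minimal surface equation in $B_1^+$, so the only issue is continuity of $\nabla u$ at points of $B_1'$. There the tangential derivatives $\de_i u$, $i=1,\dots,n$, are already continuous in $\overline{B_1^+}\cap B_1$ by Theorem~\ref{t.frehse}(ii) (and by the two-dimensional statement (i) when $n=1$); moreover on $B_1'$ the complementarity condition $\de_{n+1}u\le 0$, $u\,\de_{n+1}u=0$ forces $\de_{n+1}u=0$ on the interior of the coincidence set $\Lambda(u)$ (where $u\equiv 0$), while on $B_1'\setminus\Lambda(u)$ the function $u$ is positive and hence, by real-analyticity of minimal graphs and the even reflection, $\de_{n+1}u=0$ there as well. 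So the candidate boundary value of $\de_{n+1}u$ on all of $B_1'$ is identically $0$, and the whole problem reduces to showing
\[
\de_{n+1}u(x)\longrightarrow 0 \quad\text{as } x\to x_0\in B_1',\ x\in B_1^+ .
\]

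The second step is to prove this limit by a barrier/comparison argument combined with the minimal surface equation. First I would normalize: fix $x_0\in B_1'$ and work in a small half-ball $B_\rho^+(x_0)$. Since $u\ge 0$ on $B_\rho'(x_0)$ and $u$ is Lipschitz with $\nabla' u$ small and continuous, on $B_\rho^+(x_0)$ the function $v(x):=u(x)-u(x',0)$ is nonnegative on the flat part $\{x_{n+1}=0\}$ after subtracting the (known, continuous) tangential profile; more precisely, set $w(x',x_{n+1}) := u(x',x_{n+1})$ and use that $w$ solves a uniformly elliptic linear equation (the linearization of the minimal surface operator around $u$, whose coefficients $a_{ij}(x)=\big(\delta_{ij}(1+|\nabla u|^2)-\de_i u\,\de_j u\big)/(1+|\nabla u|^2)^{3/2}$ are continuous on $\overline{B_\rho^+}$ because $\nabla' u$ is continuous and $\de_{n+1}u$ is bounded). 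Differentiating the equation in the $x_{n+1}$ direction shows that $h:=\de_{n+1}u$ itself solves a linear uniformly elliptic equation $\div(A\nabla h)=f$ in $B_\rho^+$ with continuous leading coefficients $A$ and bounded right-hand side $f$ coming from the $x_{n+1}$-derivatives of the $a_{ij}$ (which in turn involve second derivatives of $u$, controlled interiorly); on $B_\rho'(x_0)$ we know $h\le 0$. By the boundary regularity theory for such equations (De Giorgi–Nash–Moser together with boundary barriers at the flat portion where $h\le 0$), and using that $h$ is odd in $x_{n+1}$ so that its odd reflection solves a global equation with a distributional right-hand side supported on $B_\rho'$ of a sign compatible with $h\le 0$ there, one obtains a modulus of continuity for $h$ up to $B_\rho'$ with boundary value $0$.

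More concretely, I would run a contradiction-and-compactness argument: if $\de_{n+1}u$ does not tend to $0$ at some $x_0\in B_1'$, take a sequence $y_k\to x_0$, $y_k\in B_1^+$, with $\de_{n+1}u(y_k)\ge\delta>0$ (the case $\le-\delta$ being symmetric by the odd reflection and the sign constraint), rescale $u_k(x) := \big(u(x_0+r_k x)-\ell_k(x)\big)/r_k$ with $r_k=2\,\textup{dist}(y_k,B_1')$ and $\ell_k$ the affine tangential part, and pass to the limit. The limit $u_\infty$ is a solution of the thin obstacle problem for the Laplacian (since the coefficients $a_{ij}\to\delta_{ij}$ as $\nabla u$ is continuous, in fact $|\nabla' u|$ could a priori be nonzero but the quadratic blow-up kills its contribution at leading order — here one uses Theorem~\ref{t.frehse} to get continuity, hence the right scaling) on a half-space, with $\de_{n+1}u_\infty\ge\delta>0$ at an interior point near the boundary, contradicting the known $C^1$ regularity (indeed the known one-sided continuity, even just in the scalar Signorini case which we may invoke or reprove) of solutions to the Signorini problem for the Laplacian, for which $\de_{n+1}=0$ on the thin space. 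The main obstacle is precisely the passage to the limit and the identification of the blow-up equation: one must ensure the rescaled coefficients converge and that the odd part of $u$ in $x_{n+1}$ (carrying $\de_{n+1}u$) scales correctly and converges to a nontrivial Signorini solution; controlling the error terms generated by the non-constant leading coefficients $a_{ij}(x_0+r_k x)$, which are only continuous and not Hölder a priori at this stage, is the delicate point and is handled by freezing coefficients at $x_0$ and using that $\nabla' u$ has a modulus of continuity from Theorem~\ref{t.frehse}.
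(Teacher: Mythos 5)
Your plan breaks down at the first step, and the error propagates through the rest. You claim that the complementarity condition $\de_{n+1}u\le 0$, $u\,\de_{n+1}u=0$ on $B_1'$ ``forces $\de_{n+1}u=0$ on the interior of the coincidence set $\Lambda(u)$,'' so that the boundary value of $\de_{n+1}u$ is identically $0$ on $B_1'$. This is false: where $u=0$ the product $u\,\de_{n+1}u$ vanishes trivially, and the normal derivative is only constrained to be $\leq 0$. Generically $\de_{n+1}u$ is strictly negative on the interior of $\Lambda(u)$ (the graph detaches from the obstacle with a nonzero one-sided slope), so the even reflection $u(x',x_{n+1})=u(x',-x_{n+1})$ has a genuine Lipschitz crease there. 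The condition $\de_{n+1}u=0$ holds only on $B_1'\setminus\Lambda(u)$ and, by continuity from that side, on $\Gamma(u)$. The reduction the paper actually makes is to continuity of $\de_{n+1}u$ at points of $\Gamma(u)$: away from $\Gamma(u)$, $u$ is a smooth solution of a one-sided Neumann or one-sided Dirichlet problem for the minimal surface equation, and $\de_{n+1}u$ is automatically continuous up to $B_1'$ (with whatever boundary value, not $0$).

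Because of this, your contradiction argument does not give a contradiction. If $y_k\to x_0$ with $x_0$ in the interior of $\Lambda(u)$ and $\de_{n+1}u(y_k)\le -\delta<0$, nothing is wrong; that is exactly what happens. And even restricting to $x_0\in\Gamma(u)$, the step where you invoke the ``known $C^1$ regularity of solutions to the Signorini problem for the Laplacian'' cannot close the argument: the limit profiles you must exclude are of the form $u_\infty(x)=-a\,x_{n+1}$ for $a>0$ (extended evenly as $-a|x_{n+1}|$), and these are perfectly $C^1$ on the half-space, so $C^1$ regularity of the linearized problem does not rule them out. What kills $a>0$ is the observation that a nontrivial negative normal derivative would force $x_0$ into the interior of $\Lambda(u)$, contradicting $x_0\in\Gamma(u)$; the paper proves this via the explicit barrier construction of Lemma~\ref{l.barrier} and the comparison principle (Proposition~\ref{p.comparison}), combined with Frehse's tangential estimates (Theorem~\ref{t.frehse}(ii)) in Lemma~\ref{l.blowup} to show that $\nabla' u_\infty\equiv 0$ and hence $u_\infty$ is one of the linear profiles. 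Your proposal also omits the second part of the paper's argument: once the blowup at $\Gamma(u)$ is shown to vanish, one still needs to upgrade that to continuity at nearby points, which the paper does via the dichotomy (the ball $B_{t_k}(y_k)$ either misses $\Lambda(u)$ or sits inside it) and the smooth convergence of even/odd reflections of $u$ as solutions of the interior minimal surface equation.
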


For the proof of the proposition we start with the
following two lemmas.

\begin{lemma}\label{l.barrier}
For every $a>0$ there exists $\eps>0$
such that the solution $w_\eps:B_1\to \R$ to the thin obstacle problem 
with boundary value $g_\eps(x) = -a |x_{n+1}| + \eps$
satisfies
\begin{equation}\label{e.barriera}
w_\eps\vert_{B_{\sfrac34}'} \equiv 0.
\end{equation}
\end{lemma}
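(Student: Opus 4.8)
The plan is to construct an explicit comparison barrier, namely a radial (or cylindrically symmetric) minimal graph that vanishes on a large portion of the thin plane, and then calibrate the parameter $\eps$ so that the solution $w_\eps$ is squeezed against $0$ on $B'_{3/4}$. First I would exploit the structure of \eqref{e.signorini bc}: on $B_1'$ we have $u\,\de_{n+1}u=0$ and $\de_{n+1}u\le 0$, so wherever $w_\eps>0$ on the thin plane it is an \emph{even} solution of the minimal surface equation with $\de_{n+1}w_\eps=0$, i.e.\ it extends to a genuine solution of $\mc{w_\eps}=0$ across that part of $B_1'$. Conversely, on the coincidence set $w_\eps=0$. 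Thus the coincidence set is nonempty exactly when the minimal extension would have to go negative, and I want to choose $\eps$ small enough to force this on all of $B'_{3/4}$.

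Concretely, I would first treat the \emph{obstacle-free} minimal graph $v$ over $B_1$ (extended evenly, so it genuinely solves $\mc v=0$ in $B_1$) with the same boundary datum $g_\eps(x)=-a|x_{n+1}|+\eps$. By the comparison principle for minimal surfaces (Proposition~\ref{p.comparison}, or \cite[Chapter~1, Lemma~1.1]{Giusti03}) and the maximum principle, $v$ lies below the harmonic-type competitor and, crucially, $v|_{B_1'}\le C\eps - c\,a$ for dimensional constants $c,C>0$ (the $-c\,a$ coming from the fact that the boundary datum is $-a$ times the distance-to-the-plane-type cone, whose minimal-graph solution is strictly negative in the interior of the plane, uniformly away from $\de B_1$, while the additive $\eps$ contributes at most $\eps$ by comparison with the constant $\eps$). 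Hence for $\eps<\eps_0(a,n)$ one gets $v|_{B'_{3/4}}<0$ strictly. Since $w_\eps\ge 0 \ge$ is \emph{not} forced — rather, by Proposition~\ref{p.comparison} applied on $B_1^+$ with the same boundary data, $w_\eps\le$ any supersolution and $w_\eps\ge 0$ on $B_1'$ by admissibility — I would compare $w_\eps$ with $v$ on $B_1^+$: they share boundary data on $\de B_1^+\setminus B_1'$, and on $B_1'$ one has $w_\eps\ge 0> v$, so $w_\eps\ge v$ in $B_1^+$, which gives no contradiction directly. The right comparison is the \emph{other} direction: I build a \emph{supersolution} $\psi$ on $B_1^+$ with $\psi\ge g_\eps$ on $\de B_1^+\setminus B_1'$, $\psi= 0$ on $B'_{3/4}$, $\psi\ge 0$ on $B_1'$, and such that $\psi$ together with its even reflection is a supersolution of the Signorini system; then Proposition~\ref{p.comparison} (or its one-sided variant) yields $w_\eps\le \psi$, hence $w_\eps|_{B'_{3/4}}\le 0$, and combined with the constraint $w_\eps\ge 0$ this forces \eqref{e.barriera}.

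The key step, and the main obstacle, is therefore the construction of this supersolution $\psi$ with a prescribed zero set $B'_{3/4}$ on the thin plane. I would look for $\psi$ of the form $\psi(x)=\max\{0,\,\Phi(|x'|)\,\text{-type expression}\}$ built from the explicit one-dimensional profile of the minimal surface equation: on $B_1^+$, away from $B'_{3/4}$, take $\psi$ to be (a constant multiple, in the parameter $\eps$, of) the catenoid-like solution $\Psi_\eps$ of $\mc{\Psi_\eps}=0$ that is nonnegative, vanishes with vertical contact along the cylinder $\{|x'|=3/4\}\times\{x_{n+1}=0\}$, and is increasing in $x_{n+1}$, so that $\de_{n+1}\psi\ge 0$ where $\psi=0$ — which is exactly the sign needed for the even reflection of $\psi$ to be a Signorini \emph{super}solution (the reflected normal derivative is $\le 0$ there, so $\psi\,\de_{n+1}\psi=0$ and $\de_{n+1}\psi\le 0$ fail for $w_\eps$ but hold in the supersolution sense for $\psi$). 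One then checks $\psi\ge g_\eps$ on the spherical part of $\de B_1^+$: since $g_\eps\le \eps$ everywhere and $\psi\ge c'\eps$ there (scaling the catenoid profile linearly in $\eps$ and using that it is bounded below by a positive constant on $\de B_1\cap\{x_{n+1}\ge\delta\}$, while near $B_1'$ the datum $g_\eps$ is itself $\le\eps$ and small), this holds for $\eps$ small relative to $a$ and $n$. Finally apply comparison: $w_\eps\le\psi$ on $\bar B_1^+$, so $0\le w_\eps|_{B'_{3/4}}\le\psi|_{B'_{3/4}}=0$, proving \eqref{e.barriera}. The delicate points are (a) making the vertical-contact catenoid profile genuinely a supersolution across $B'_{3/4}$ in the variational (Signorini) sense, which requires care at the free-boundary circle $\{|x'|=3/4\}$, and (b) the boundary inequality on $\de B_1\cap\{x_{n+1}>0\}$ near the equator, where both $\psi$ and $g_\eps$ are small and one needs the $-a|x_{n+1}|$ term of $g_\eps$ to help rather than the $+\eps$ term to hurt; both are handled by a quantitative choice $\eps=\eps(a,n)$, shrinking $\eps$ as $a$ grows.
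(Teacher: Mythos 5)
Your approach is genuinely different from the paper's, and as presented it has a gap at its crux. The paper argues as follows: by uniqueness and the radial symmetry of the boundary datum $g_\eps$, the solution $w_\eps$ depends only on $(|x'|,x_{n+1})$, so the problem reduces to a 2D thin obstacle problem to which Frehse's Theorem~\ref{t.frehse}(i) gives uniform one-sided $C^1$ control on the annulus $B_{\sfrac34}^+\setminus B_{\sfrac14}^+$. Since the comparison principle (Proposition~\ref{p.comparison}) shows $w_\eps\to -a|x_{n+1}|$ uniformly as $\eps\downarrow 0$, the equicontinuity upgrades this to $C^1$ convergence on the annulus, whence $\de_{n+1}w_\eps\leq -\sfrac a2$ there for $\eps$ small; by the Signorini conditions this forces $B_{\sfrac34}'\setminus B_{\sfrac14}'\subset\Lambda(w_\eps)$. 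The remaining disk is then handled by comparing, on $B_{\sfrac12}$, with the trivially admissible linear solution $-\sfrac a2\,|x_{n+1}|$: one checks $\phi_\eps\leq -\sfrac a2\,t$ on $\de B_{\sfrac12}$ and applies Proposition~\ref{p.comparison} again. Only genuine solutions are ever compared, and the only barrier used is linear.

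Your plan instead requires constructing a nontrivial supersolution $\psi$ with prescribed contact set exactly $\overline{B_{\sfrac34}'}$. Two things are missing. First, the barrier itself: you posit a nonnegative minimal graph $\Psi_\eps$ over $B_1^+$, increasing in $x_{n+1}$ and vanishing with degenerate (tangential) contact along the circle $\{|x'|=\sfrac34,\ x_{n+1}=0\}$, but you give no formula and no existence argument. Such an explicit profile is not at hand (the catenoid is not a graph near its neck, and in dimensions $n\geq 2$ the rotationally symmetric minimal surface ODE does not produce a graph with a prescribed degenerate zero circle); and checking that the even reflection of $\max\{0,\Psi_\eps\}$ is a supersolution of the Signorini system across the free-boundary circle is precisely the difficulty you flag in point (a) but do not resolve — it is essentially equivalent to the lemma itself. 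Second, Proposition~\ref{p.comparison} compares two \emph{solutions}; a comparison with a Signorini supersolution would need to be formulated and justified. The paper avoids both issues by leveraging the symmetry reduction to 2D together with Frehse's regularity, a route your proposal does not take.
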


\begin{proof}
From the uniqueness of the solutions to the 
obstacle problems \eqref{e:ob prob}
and the radial symmetry
of the boundary value $g_\eps(x', x_{n+1}) = g_\eps(y', x_{n+1})$
if $|x'| = |y'|$, we deduce that
$w_\eps(x) = \phi_\eps(|x'|, x_{n+1})$
for some function $\phi_\eps: B_1\subset \R^2 \to \R$.
Moreover, from the regularity of $w_\eps$ (see, e.g., \cite[Theorem 4]{Giu74}) and from its variational
characterization, it follows that
$\phi_\eps$ is locally Lipschitz and solves
the variational problem
\begin{equation}\label{e.problema bidim}
\phi_\eps \in \text{argmin}_{\phi\in \mathcal{C}} \int_{B_1}
\sqrt{1 + |\nabla \phi(\rho,t)|^2}
\,\rho^{n-1}\, \d \rho \d t
\end{equation}
with
\[
\mathcal{C}:=\big\{
\phi\vert_{\de B'_1}\geq 0 \quad \text{and}\quad
\phi(\rho,t) = -a|t| + \eps \quad \forall\; (\rho, t)\in \de B_1
\big\}.
\]
In particular, from Theorem \ref{t.frehse} (i)
it follows that where the integrand is uniformly elliptic,
the solutions $\phi_\eps$ have uniform continuity bounds 
on their derivatives. Thus, in particular, 
\[
\vert
\nabla \phi_\eps(x) - \nabla \phi_\eps (y) \vert
\leq \omega_0(|x-y|)
\quad \forall\; x,y \in {B_{\sfrac34}^+\setminus B_{\sfrac14}^+},
\]
where $\omega_0$ is the modulus of continuity
in Theorem \ref{t.frehse} (i).
In particular, from Proposition \ref{p.comparison} it follows that 
$w_\eps$ converge in $C^1(B_{\sfrac34}^+\setminus B_{\sfrac14}^+)$
to $w_\infty(x) := - a x_{n+1}$ and
\begin{equation}\label{e:derivata negativa0}
\lim_{\eps \to 0}\|\de_{n+1}w_\eps 
+a\|_{L^{\infty}(B_{\sfrac34}^+\setminus B_{\sfrac14}^+)}
= 0.
\end{equation}
We then infer that there exists $\eps_0>0$ such that
\[
\de_{n+1} w_\eps (x)
\leq - \sfrac{a}2
\quad\forall\; \eps \in (0, \eps_0), \;\forall\;
x \in B_{\sfrac34}^+\setminus B_{\sfrac14}^+, 
\]
and in view of Theorem~\ref{t.frehse} (i)
\begin{equation}\label{e:derivata negativa}
\de_{n+1} w_\eps (x',0^+)
:= \lim_{t\to 0^+}\frac{w_\eps(x',t) - w_\eps(x',0)}{t}
\leq - \sfrac{a}2
\end{equation}
for $\eps \in (0, \eps_0)$
and $x' \in B_{\sfrac34}'\setminus B_{\sfrac14}'$.
Recalling the Euler--Lagrange equations associated
to the thin obstacle problem \eqref{e.signorini bc},
this implies that $B_{\sfrac34}'\setminus B_{\sfrac14}' 
\subset \Lambda(w_\eps)$
for all $\eps<\eps_0$.

We need only to show that $B_{\sfrac14}'\subset
\Lambda(w_\eps)$ if $\eps$
is suitably chosen. To this aim we show that, 
for $\eps$ sufficiently small, we have that
\begin{equation}\label{e.maggiorazione al bordo}
\phi_\eps (\rho, t) \leq - \frac{a}{2}\, t \quad 
\forall\; (\rho,t) \in
\de B_{\sfrac12}.
\end{equation}
Indeed, given for granted the last inequality, 
the comparison principle for the solutions to the thin
obstacle problem in Proposition \ref{p.comparison},
yields 
that $w_\eps (x) \leq - \frac{a}{2}\,|x_{n+1}|$
for every $x\in \bar{B}_{\sfrac12}$, from which 
$B_{\sfrac14}'\subset \Lambda(w_\eps)$ readily follows.
In order to show \eqref{e.maggiorazione al bordo}, 
we notice that by \eqref{e:derivata negativa0}
\[
\phi_\eps(\rho,t) \leq - \frac{a}{2}\,t \quad 
\quad\forall\;\eps \in (0, \eps_0),\;
\forall\;\rho \in (\sfrac14,\sfrac12)
\;\text{and}\;\forall\;
t\in (0, \sfrac{\sqrt{5}}{4}), 
\]
where we used that $(x',t)\in B_{\sfrac34}^+\setminus B_{\sfrac14}^+$
if $|x'| \in (\sfrac14,\sfrac12)$ and 
$t\in (0, \sfrac{\sqrt{5}}{4})$.
Moreover, since $\phi_\eps$ converges to $-a t$ in $B_{\sfrac34}^+
\setminus B_{\sfrac14}^+$, we also infer that there exists
$\eps_1>0$ such that
\[
\phi_\eps (\rho, t) \leq -a t + \frac{a}{8}\leq
- \frac{a}{2}\, t\;
\quad\forall\;\eps \in (0, \eps_1),\;
\; \forall\; (\rho,t) \in
\de B_{\sfrac12}^+, \; t\geq \sfrac14. 
\]
Putting the two estimates together, we deduce that
\eqref{e.maggiorazione al bordo} holds
for every $\eps < \min\{\eps_0, \eps_1\}$,
thus concluding the lemma.
\end{proof}

We prove next an auxiliary result.

\begin{lemma}\label{l.blowup}
Let $u\in W^{1,\infty}(B_1)$ be a solution to the thin
obstacle problem \eqref{e:ob prob}.
Then, for any sequence of points $z_k\in \Gamma(u)$
and of radii $t_k\downarrow 0$ (with $t_k\leq 1-|z_k|$),
the functions
\[
u_k (x) := \frac{u(z_k+t_k x)}{t_k}
\]
converge to $0$ uniformly on $\bar B_1$.
\end{lemma}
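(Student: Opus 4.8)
The plan is to exploit the scale invariance of the thin obstacle problem together with the barrier constructed in Lemma~\ref{l.barrier}, and to combine it with the uniform Lipschitz bound on $u$. First, I observe that each $u_k$ is a solution to the thin obstacle problem on the ball $B_{(1-|z_k|)/t_k}\supset B_1$ (by the invariance of the area functional under the rescaling $x\mapsto z_k+t_kx$ followed by division by $t_k$, which preserves both the minimality in $B_1^+$, the even symmetry, and the sign constraint $u_k|_{B_1'}\geq 0$), and that moreover $0\in\Gamma(u_k)$ since $z_k\in\Gamma(u)$. Since $u$ is globally Lipschitz, the $u_k$ are equi-Lipschitz on $\bar B_1$ with the same constant $L:=\Lip(u)$, and since $u_k(0)=0$ we have $\|u_k\|_{L^\infty(\bar B_1)}\leq L$. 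Hence, up to a subsequence, $u_k\to u_\infty$ uniformly on $\bar B_1$, where $u_\infty$ is Lipschitz, even in $x_{n+1}$, satisfies $u_\infty|_{B_1'}\geq 0$, and $u_\infty(0)=0$. It remains to show $u_\infty\equiv 0$; since the limit is the same for every subsequence, this will give the full convergence.

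To identify the limit, I would argue that $u_\infty\geq 0$ everywhere, not just on $B_1'$. Indeed, $0$ is a free boundary point of $u_k$, so for each $k$ there are points of $\Lambda(u_k)$ arbitrarily close to the origin, and one can then run the standard Signorini-type argument: $u_\infty$ is a solution to the thin obstacle problem on $B_1$ (minimality passes to the uniform limit of equi-Lipschitz minimizers by a standard lower-semicontinuity plus competitor-construction argument, using that the area integrand is convex and that boundary values converge), with $0\in\Lambda(u_\infty)$, and by the evenness it satisfies the Signorini conditions \eqref{e.signorini bc}. The key point is that a nonnegative-on-$B_1'$, even, harmonic-type (minimal) function with a nontrivial negative part would force $\de_{n+1}u_\infty<0$ somewhere on $B_1'$, hence $u_\infty=0$ on a neighborhood there, propagating; but I think the cleanest route is the direct barrier comparison of the next step, which sidesteps classifying $u_\infty$.

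The decisive step, and the one I expect to be the main obstacle, is the upper bound $u_\infty\leq 0$, for which I would use Lemma~\ref{l.barrier} after a rescaling. Fix $\delta>0$; I want to show $u_k\leq \delta$ on $\bar B_{\sfrac12}$ for $k$ large. Since $u_k(0)=0$ and $0$ is a free boundary point, near the origin $u_k$ touches zero, so on $\de B_{\sfrac34}$ one has $u_k\leq L\cdot\sfrac34$; more importantly, by the interior regularity already available and the fact that $\Lambda(u_k)$ meets every neighborhood of $0$, one gets that $u_k$ is small on a suitable sphere. Concretely, pick $a>0$ with $a/2>2L$ and let $\eps>0$ be as in Lemma~\ref{l.barrier}, so that the solution $w_\eps$ with data $-a|x_{n+1}|+\eps$ vanishes on $B_{\sfrac34}'$; after rescaling $w_\eps$ by a small factor $\lambda$ and using that $u_k$ has a zero arbitrarily close to $0$ together with the Lipschitz bound, one arranges $u_k|_{\de B_\rho}\leq (\lambda w_\eps)|_{\de B_\rho}$ on an appropriate sphere $\de B_\rho$ with $\rho\leq \sfrac34$, whence Proposition~\ref{p.comparison} gives $u_k\leq \lambda w_\eps\leq \lambda\eps$ on $B_\rho'$ and then, by the same comparison on the upper half-ball, $u_k\leq\lambda\eps$ throughout $B_{\rho/2}$. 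Sending $k\to\infty$ and then $\lambda,\eps\to 0$ forces $u_\infty\leq 0$ on a fixed small ball; combined with $u_\infty\geq 0$ this gives $u_\infty\equiv 0$ there, and a covering/propagation argument (or simply rerunning the above with $0$ replaced by any interior free boundary point of $u_\infty$, of which there are plenty since $u_\infty$ vanishes on a relatively open set) upgrades this to $u_\infty\equiv 0$ on all of $B_1$. The delicate bookkeeping will be choosing the rescaling factor $\lambda$ and the radius $\rho$ uniformly in $k$, so that the barrier comparison can be set up on a sphere that lies well inside the domain of definition $B_{(1-|z_k|)/t_k}$ of $u_k$; this is where the hypothesis $t_k\leq 1-|z_k|$ and the equi-Lipschitz bound are essential.
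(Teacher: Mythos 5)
Your setup is correct (equi-Lipschitz rescalings, $0\in\Gamma(u_k)$, uniform subconvergence to a solution $u_\infty$), but the argument then departs from the paper's route in a way that leaves two genuine gaps, and it is precisely the structural input the paper extracts from Theorem~\ref{t.frehse}(ii) that is missing.

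First, the claim ``$u_\infty\geq 0$ everywhere'' is false in general: the sign constraint is only on $B_1'$, and the blowup profiles that actually arise here, namely $u_\infty(x)=-a\,|x_{n+1}|$ with $a\geq 0$, are strictly negative off the thin set while satisfying the full Signorini conditions \eqref{e.signorini bc}. Your heuristic ``negative part $\Rightarrow$ $\de_{n+1}u_\infty<0$ somewhere on $B_1'$ $\Rightarrow$ $u_\infty=0$ nearby, propagating'' yields no contradiction: it is entirely consistent with $u_\infty=-a|x_{n+1}|$, $a>0$. So there is no lower bound $u_\infty\geq 0$ to combine with the barrier.

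Second, the barrier comparison cannot be set up from the information you have. Two problems: (a) $\lambda w_\eps$ (scalar multiple) is not a solution, since the area integrand is not homogeneous; the only rescaling that preserves solutions is a dilation $x\mapsto \lambda\,w_\eps(x/\lambda)$, whose boundary datum on $\partial B_\lambda$ is $-a|x_{n+1}|+\lambda\eps$. (b) More fundamentally, to invoke Proposition~\ref{p.comparison} on $\partial B_\rho$ one needs in particular $u_k\leq$ barrier at the poles $(0,\pm\rho)$, where the barrier takes a value $\approx -a\rho$. With your choice $a>4L$, this is far below $-L\rho\leq u_k(0,\pm\rho)$, which follows from $u_k(0)=0$ and the Lipschitz bound. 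So the inequality fails at the poles, not by a bookkeeping margin but structurally: a Lipschitz function vanishing at the center cannot lie below a barrier that descends faster than its Lipschitz constant allows. Choosing $a$ small does not help either, since then the comparison fails near the equator, where the barrier is $\approx\lambda\eps$ while $u_k$ may be as large as $L\lambda$.

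What is missing is the key observation of the paper: by Theorem~\ref{t.frehse}(ii), $\nabla'u_k$ has modulus of continuity $\omega_1(t_k|\cdot|)$, which degenerates as $t_k\downarrow 0$; since $\nabla'u_k(0)=0$, one gets $\nabla'u_k\to 0$ uniformly, hence $\nabla'u_\infty\equiv 0$, hence $u_\infty$ is a one-variable solution, i.e.\ $u_\infty=-a\,x_{n+1}$ on $\bar B_1^+$. Only after this classification is the barrier comparison sound: uniform convergence $u_k\to -a|x_{n+1}|$ forces $u_k|_{\partial B_1}\leq g_\eps|_{\partial B_1}$ for $k$ large, and then Lemma~\ref{l.barrier} plus Proposition~\ref{p.comparison} give $u_k\equiv 0$ on $B'_{\sfrac34}$, contradicting $0\in\Gamma(u_k)$. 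Without the one-dimensionality of the blowup, your comparison simply cannot be arranged.
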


\begin{proof}
The functions $u_k$ are equi-Lipschitz continuous
(with $\Lip(u_k)\leq \Lip(u)$) and are 
solutions to the thin obstacle problem with $\underline{0}\in \Gamma(u_k)$.
Therefore, up to passing to a subsequence
(not relabeled for convenience),
$u_k$ converges uniformly on $\bar B_1$ to a function $u_\infty$
which is itself a solution to the thin obstacle problem.
We need now to prove that $u_\infty\equiv 0$.

We start noticing that, in view of Theorem~\ref{t.frehse} (ii),
we have
\begin{equation}\label{e:ukequi}
|\nabla' u_k(x) - \nabla' u_k(y)| =
|\nabla' u(t_k x+z_k) - \nabla' u(t_k y +z_k)| 
\leq \omega_1(t_k |x-y|),
\end{equation}
where $\nabla' = (\de_1, \ldots, \de_n)$ denotes the horizontal
gradient.
Thus, by \eqref{e:ukequi} and since $\nabla' u_k(\underline{0}) = \underline{0}$, 
$\|\nabla' u_k\|_{\infty}$ converge to $0$.
Being $\nabla' u_k$ equi-continuous (with modulus of continuity 
$\omega_1$), we then infer that $\nabla' u_k$
converges to $\nabla' u_\infty$ uniformly and
$\nabla' u_\infty \equiv \underline{0}$, {\ie}
$u_\infty$ is a function depending exclusively on the 
variable $x_{n+1}$. By direct computation one 
can show that the only solutions depending on one 
variable are the linear functions of the form
\[
u_\infty (x) = -a x_{n+1}\quad\text{on $\bar B_1^+$, for some $a\geq 0$}.
\]
The thesis is then reduced to proving that $a=0$.
Assume that $a>0$: let $\eps>0$ be the
constant in Lemma \ref{l.barrier} and notice that,
since $u_k$ converges to $u_\infty= -a x_{n+1}$ uniformly on $\bar B_1^+$,
it must be $u_k\vert_{\de B_1} \leq w_\eps\vert_{\de B_1}$ definitively, 
where $w_\eps$ is the solution to the thin obstacle problem with 
boundary value $g_\eps(x) = -a|x_{n+1}| +\eps$.
By the comparison principle of Proposition~\ref{p.comparison}
$u_k\vert_{B_1} \leq w_\eps\vert_{B_1}$ for $k$
sufficiently large,
which in turn by Lemma \ref{l.barrier} leads to $u_k\vert_{B_{\sfrac34}'} \equiv 0$.
This is a contradiction to $0 \in \Gamma(u_k)$, thus establishing
that $a=0$.

Finally, since we have shown that any
convergent subsequence of $u_k$ is uniformly converging to $0$,
we conclude that the whole sequence $u_k$ converges uniformly
to $0$ on $\bar B_1$.
\end{proof}

\begin{proof}[Proof of Proposition \ref{p.C1}]
By Frehse's Theorem~\ref{t.frehse},
we need only to prove that the normal derivative $\de_{n+1} u$ 
is a continuous function in $B_1^+\cup B_1'$.
Moreover, since $\de_{n+1}u$ is analytic in $B_1^+\cup B_1'
\setminus \Gamma(u)$,
we have only to check its continuity at points of the free
boundary $\Gamma(u)\subseteq B_1'$.

Without loss of generality, we can assume that
$\underline{0}\in\Gamma(u)$ and we begin with showing that $u$ 
is differentiable at $\underline{0}$ with zero normal derivative:
\begin{equation}\label{e.derivata normale zero}
\lim_{t\to 0^+}\frac{u(0,t)}{t} = 0.
\end{equation}
We apply Lemma \ref{l.blowup} to any sequence 
$(t_k)_{k\in \N}$ with $t_k\downarrow 0$ and $z_k=\underline{0}$
for all $k$: the functions 
$u_k (x) = t_k^{-1}u(t_k x)$ converge uniformly to $0$ 
in $\bar B_1$. In particular,
\[
\lim_{k\to \infty}\frac{u(0,t_k)}{t_k} =
\lim_{k\to \infty} u_k(e_{n+1}) = 0.
\]
From the arbitrariness 
of the sequence $(t_k)_{k\in \N}$,
\eqref{e.derivata normale zero} in turn follows.

Next we prove the $\de_{n+1} u$ is continuous in $\underline{0}\in\Gamma(u)$.
Let $y_k \in B_1^+\cup \big(B_1'\setminus \Gamma(u)\big)$ be
a sequence of points converging to $\underline{0}$.
Let $t_k := \dist\big(y_k,\Gamma(u)\big)=|y_k-z_k|\to 0$,
with $z_k \in \Gamma(u)$.
Therefore $B_{t_k}(y_k)\cap \Gamma(u) =\emptyset$, and 
either $B_{t_k}(y_k)\cap \Lambda(u) =\emptyset$,
in which case we set $v(x) := u(x)$ for all $x\in B_{t_k}(y_k)$,
or $B_{t_k}(y_k)\cap B_1'\subseteq \Lambda(u)$ and we set 
\[
v(x) :=
\begin{cases}
u(x) & \text{if $x_{n+1}\geq 0$,}\\
-u(x) & \text{if $x_{n+1}<0$.}
\end{cases}
\]
In both cases $v$ is a solution to the minimal surface equation
in $B_{t_k}(y_k)$ 
(indeed, $u$ solves the minimal surface equation
in $B_{t_k}^+(y_k)$ either with null Neumann or with null Dirichlet 
boundary conditions on $B_{t_k}(y_k)\cap B_1'$, respectively;
therefore $v$ is readily regognized to be a solution in both cases).
Set $\tau_k:= 2\,|y_k-z_k|$ and let $v_k: B_1 \to \R$ 
be given by 
\[
v_k(x):=\frac{v(z_k+\tau_k\,x)}{\tau_k}.
\]
By Lemma \ref{l.blowup}, $v_k$ is uniformly converging to $0$.
Moreover, by possibly passing to a further subsequence, 
we can assume that $p_k:=\frac{y_k-z_k}{\tau_k} \to p\in
\de B_{\sfrac12}$.
Since, the functions $v_k$
are solutions of the minimal surface equation in $B_{\sfrac12}(p)$
and they are converging uniformly to $0$,
the regularity theory for the minimal surface equation
implies that the convergence is in fact smooth.
In particular, in both cases discussed above we get
\[
\lim_{k\to \infty}\de_{n+1}v(y_k) = 
\lim_{k\to \infty}\de_{n+1} v_k(p_k) = 0,
\]
thus concluding the continuity of $\de_{n+1}u$ at $\underline{0}$.
\end{proof}

%
%
\section{$C^{1,\alpha}$ regularity}\label{s:C1alpha}

This section is devoted to show the one-sided 
$C^{1,\alpha}(B_1^+\cup B_1')$ regularity.
To this aim, we need to consider approximate solutions
produced by the method of penalization.

\subsection{The penalized problem}
Let $g\in C^{2}(\R^{n+1})$ be a fixed boundary value for 
\eqref{e:ob prob} and let $u\in W^{1,\infty}(B_1)$ be
the unique solution to the thin obstacle problem.
For the rest of the section, we set $L:=\Lip(u)$.

We start off considering the following penalized problem:
let $\beta,\chi\in C^\infty(\R)$ be such that
\[
|t|-1\leq |\beta(t)|\leq |t|\quad\forall\;t\leq 0,\quad
\beta(t) = 0 \quad\forall\;t\geq 0,\quad
\beta'(t)\geq 0 \quad\forall\;t\in\R\,,
\]
\[
\chi(t) =
\begin{cases}
0 & \text{for } \; t\leq L,\\
\frac{1}{2}\,(t-2L)^2& \text{for } \; t > 3L,
\end{cases}
\quad \chi''(t) \geq 0\quad\forall\;t\in \R.
\]
For every $\eps>0$ set $\beta_\eps(t) := \eps^{-1}
\beta(\sfrac{t}\eps)$
and we introduce the energy 
\begin{equation*}
\mathscr{E}_\eps(v) := \int_{B_1} \left(\jac{v} +
\chi(|\nabla v|)\right) \, \d x 
+ \int_{B_1'}F_\eps(v(x',0))\,\d x',
\end{equation*}
where $F_\eps(t) :=2\int^t_0 \beta_\eps(s) \,\d s$.
Since the energy $\mathscr{E}_\eps$ is strictly 
convex and quadratic, there exists a unique minimizer $u_\eps\in
g+W^{1,2}_0(B_1)$.
Moreover, from the symmetry of $g$, it follows that
$u_\eps$ is also even symmetric with respect to $x_{n+1}$.

The Euler--Lagrange equation satisfied by $u_\eps$ is then
given by
\begin{equation}\label{e.beta eq}
\int_{B_1^+} 
A(\nabla u_\eps) \cdot \nabla \eta
\, \d x + 
\int_{B_1'}\beta_{\eps}(u_\eps)\;\eta\;\d x' = 0
\qquad \forall\; \eta \in H^1_0(B_1),
\end{equation}
with $A:\R^{n+1} \to \R^{n+1}$ being the vector field
\[
A(p):=\left((1+|p|^2)^{-\sfrac12}+
\chi'(|p|)\,|p|^{-1}\right)\,p.
\]
Note that for $|p|\leq L$ the second addend is actually null.
 
The following lemma establish the connection between the solutions
of the penalized problems and the solution to the thin obstacle problem.

\begin{lemma}\label{l.convergenza a sol}
Let $g\in C^2(\R^{n+1})$ be even symmetric with respect
to $x_{n+1}$ and $g\vert_{\R^n\times\{0\}}\geq 0$.
Then, the minimizers $u_\eps$ of $\mathscr{E}_\eps$ on $g+W^{1,2}_0(B_1)$
converge weakly in $W^{1,2}$ as $\eps$ goes to $0$ to the
solution $u$ to the thin obstacle problem \eqref{e:ob prob}.
\end{lemma}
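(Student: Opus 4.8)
The plan is to combine a standard energy/compactness argument with the variational characterization of $u$ via Proposition 3.1, while carefully accounting for the extra penalization terms $\chi(|\nabla v|)$ and $F_\eps$. First I would establish uniform $W^{1,2}$ bounds on the family $(u_\eps)$. Testing the Euler--Lagrange equation \eqref{e.beta eq} with $\eta = u_\eps - g$ (which lies in $H^1_0(B_1)$), using that $\beta_\eps(u_\eps)\,(u_\eps - g)\le \beta_\eps(u_\eps)\,u_\eps\le 0$ on $B_1'$ since $g\ge 0$ there and $\beta_\eps$ is supported on the negatives with $\beta_\eps(t)\,t\ge 0$, together with the ellipticity lower bound $A(p)\cdot p\ge c|p|^2/\sqrt{1+|p|^2}$, yields $\int_{B_1}|\nabla u_\eps|^2/\sqrt{1+|\nabla u_\eps|^2}\,\d x\le C$ with $C$ independent of $\eps$; splitting into the regions $\{|\nabla u_\eps|\le 1\}$ and $\{|\nabla u_\eps|>1\}$ this upgrades to an $L^2$ bound on $\nabla u_\eps$. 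Hence, up to a subsequence, $u_\eps\weak u_0$ in $W^{1,2}(B_1)$ for some $u_0\in g+W^{1,2}_0(B_1)$, and by Rellich $u_\eps\to u_0$ strongly in $L^2(B_1)$ and in $L^2(B_1')$ (using the trace compactness).

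Next I would identify the limit $u_0$ as an admissible competitor and then as the minimizer. To see $u_0\vert_{B_1'}\ge 0$: from $\sup_\eps\mathscr E_\eps(u_\eps)\le\mathscr E_\eps(g)=\mathcal F(g)+\int_{B_1}\chi(|\nabla g|)<\infty$ (note $F_\eps(g)=0$ on $B_1'$) and the nonnegativity of the area and $\chi$ terms, we get $\int_{B_1'}F_\eps(u_\eps)\le C$; since $F_\eps(t)=2\int_0^t\beta_\eps\ge \eps^{-1}(|t|-\eps)_+^2$-type lower bound for $t<0$, the uniform bound forces $\int_{B_1'}(u_\eps^-)^2\to 0$, whence $u_0\ge 0$ a.e.\ on $B_1'$; evenness in $x_{n+1}$ passes to the limit, so $u_0\in\mathscr A_g$. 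For minimality I would use lower semicontinuity: for any competitor $v\in\mathscr A_g$, minimality of $u_\eps$ for $\mathscr E_\eps$ gives
\begin{equation*}
\int_{B_1}\jac{u_\eps}\,\d x\le\int_{B_1}\bigl(\jac{u_\eps}+\chi(|\nabla u_\eps|)\bigr)\d x\le\mathscr E_\eps(v)=\int_{B_1}\bigl(\jac v+\chi(|\nabla v|)\bigr)\d x+\int_{B_1'}F_\eps(v),
\end{equation*}
and since $v\vert_{B_1'}\ge 0$ we have $F_\eps(v)=0$ on $B_1'$, while $\chi(|\nabla v|)=0$ because $|\nabla v|\le\Lip(u)=L$ is false in general — here I must instead compare against the specific minimizer $u$ itself rather than an arbitrary $v$, since only $\Lip(u)\le L$ is known. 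So the cleaner route: take $v=u$, obtaining $\int_{B_1}\jac{u_\eps}\le\int_{B_1}\jac u$ (both $\chi$ and $F_\eps$ terms vanish on the right); by weak lower semicontinuity of the area functional, $\int_{B_1}\jac{u_0}\le\liminf_\eps\int_{B_1}\jac{u_\eps}\le\int_{B_1}\jac u$. Since $u_0\in\mathscr A_g$ and $u$ is the \emph{unique} minimizer in this class, we conclude $u_0=u$.

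The main obstacle I anticipate is the bookkeeping around the penalization term $\chi(|\nabla v|)$: it is tempting but incorrect to assume the limit competitor has gradient bounded by $L$, so the minimality comparison must be run against $u$ (whose Lipschitz bound is exactly $L$, making $\chi(|\nabla u|)\equiv 0$) rather than a general admissible function; this is what makes the argument close. A secondary technical point is making the coercivity estimate genuinely $\eps$-uniform and converting control of $\int|\nabla u_\eps|^2/\jac{u_\eps}$ into an honest $W^{1,2}$ bound, which is routine via the two-region split but should be stated. Finally, since the subsequential limit is always the unique $u$ independently of the subsequence, the full family $u_\eps\weak u$ as $\eps\to 0$, as claimed.
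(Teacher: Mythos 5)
Your overall structure matches the paper's proof — compactness, trace limit nonnegativity, minimality of the limit, and uniqueness — but there are two substantive gaps and a sign error in the route you take.

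First, the uniform $W^{1,2}$ bound. You test \eqref{e.beta eq} with $\eta = u_\eps - g$ and claim the boundary contribution satisfies $\beta_\eps(u_\eps)(u_\eps-g)\leq \beta_\eps(u_\eps)u_\eps \leq 0$. Both inequalities are reversed: since $\beta_\eps\leq 0$ everywhere and $g\geq 0$ on $B_1'$, one has $\beta_\eps(u_\eps)u_\eps\geq 0$ and $-\beta_\eps(u_\eps)g\geq 0$, so $\beta_\eps(u_\eps)(u_\eps-g)\geq 0$. (The sign that is actually true is the one you need, so this is repairable, but as written the argument fails.) More seriously, the coercivity you invoke, $A(p)\cdot p\geq c\,|p|^2/\sqrt{1+|p|^2}$, is degenerate: for $|p|>1$ the right-hand side grows only linearly, so the two-region split gives an $L^1$, not $L^2$, bound on $\nabla u_\eps$. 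The quadratic bound genuinely requires the extra term $\chi'(|p|)|p|$, which is quadratic for $|p|>3L$; you never invoke it. The paper sidesteps all of this with the elementary observation $t^2\leq C(1+\chi(t))$, combined with $\mathscr{E}_\eps(u_\eps)\leq\mathscr{E}_\eps(u)=\int_{B_1}\jac u\,\d x$ (using $|\nabla u|\leq L$ so $\chi(|\nabla u|)\equiv 0$ and $u\vert_{B_1'}\geq 0$ so $F_\eps(u)=0$), which is both simpler and avoids the Euler--Lagrange equation entirely.

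Second, the identification of the limit. You conclude ``$u_0\in\mathscr{A}_g$ and $u$ is the unique minimizer in this class, so $u_0=u$.'' But $\mathscr{A}_g$ is contained in $g+W^{1,\infty}_0(B_1)$, and the weak $W^{1,2}$ limit $u_0$ is a priori only in $g+W^{1,2}_0(B_1)$ — nothing in the compactness argument gives $\nabla u_0\in L^\infty$. The paper handles this by working in the enlarged admissible set $\mathcal{B}_g:=\{w\in g+W^{1,2}_0(B_1): w\vert_{B_1'}\geq 0\}$ and noting explicitly that $u$ is still the unique minimizer of the area over $\mathcal{B}_g$ (``this follows from an approximation argument''). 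Your comparison $\int\jac{u_0}\leq\int\jac u$ is correct as far as it goes, but by itself it only gives $u_0=u$ if you already know $u$ minimizes over the class that actually contains $u_0$; that extension from $\mathscr{A}_g$ to $\mathcal{B}_g$ is the missing ingredient. Once these two points are addressed, the rest (Chebyshev argument for $u_0\vert_{B_1'}\geq 0$ and the Urysohn step for full-family convergence) is sound and agrees with the paper.
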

\begin{proof}
From the definition of $\chi$ one readily verifies that
there exists a constant $C>0$ such that
$t^2\leq C(1 + \chi(t))$ for every $t\geq0$.
Thus, it follows that the approximate solutions $u_\eps$
have equi-bounded Dirichlet energy:
\begin{align*}
\int_{B_1} |\nabla u_\eps|^2\, dx &\leq 
C\cL^{n+1}(B_1) + C\int_{B_1} \chi(|\nabla u_\eps|)\, dx
\leq C\cL^{n+1}(B_1) + C\mathscr{E}_\eps(u_\eps)\\
& \leq C\cL^{n+1}(B_1) + C\mathscr{E}_\eps(u)
= C\cL^{n+1}(B_1) + C\int_{B_1}\jac{u}\,dx.
\end{align*}
Then, up to extracting a subsequence (not relabeled), 
there exists a function $u_0\in g + W^{1,2}_0(B_1)$ 
such that $u_\eps$ converges  to $u_0$ in $L^{2}(B_1)$
and the trace $u_\eps\vert_{B_1'}$ converges 
to $u_0\vert_{B_1'}$ in $L^{2}(B_1')$.

We next show that $u_0\vert_{B_1'}\geq 0$.
Recalling that $F_\eps$ is positive and monotone decreasing, 
we have by Chebyshev inequality
\[
F_\eps(-\delta)\,\cL^n\big(\{u_\eps< -\delta \}
\cap B_1'\big) \leq 
\int_{B_1'}F_\eps(u_\eps)\,\d x \leq \mathscr{E}_\eps(u_\eps) 
\leq\int_{B_1}\sqrt{1+|\nabla u|^2}\,\d x.
\]
Since $F_\eps(t)\uparrow\infty$ as $\eps\downarrow 0$
for all $t<0$ and $u_\eps\vert_{B_1'}\to u_0\vert_{B_1'}$
in $L^{2}(B_1')$, we conclude that
\[
\cL^n\big(\{u_0< -\delta \}\cap B_1'\big) = 0 \quad \forall \;\delta>0,
\]
which implies $u_0\vert_{B_1'}\geq 0$, 
{\ie}~$u_0\in\mathcal{B}_g$ where 
\[
\mathcal{B}_g := \Big\{w \in g+W_0^{1,2}(B_1) :
w\vert_{B_1'}\geq 0 \Big\}.
\]
Furthermore, $u_0$ is the unique minimizer in $\mathcal{B}_g$ 
of the energy $\mathscr{F}:\,W^{1,2}(B_1)\to[0,\infty)$
defined by
\[
\mathscr{F}(w):= \int_{B_1}\left(\jac{w} +
\chi(|\nabla w|)\right) \, \d x\,.
\]
Indeed, by convexity of $\mathscr{F}$, for every 
$w \in \mathcal{B}_g$ we have that
\begin{align*}
\mathscr{F}(u_0) & \leq
\liminf_{\eps\to0^+}
\mathscr{F}(u_\eps) \leq 
\liminf_{\eps\to0^+} \mathscr{E}_{\eps}(u_{\eps})
\leq
\liminf_{\eps\to0^+} \mathscr{E}_{\eps}(w)
= \mathscr{F}(w),
\end{align*}
since $\mathcal{B}_g\subset g + W_0^{1,2}(B_1)$
and $F_\eps(w) = 0$ for all $w\in \mathcal{B}_g$.
To conclude, we only need to notice that the unique
minimizer of $\mathscr{F}$ on $\mathcal{B}_g$ is exactly
the solution to the thin obstacle
problem $u$.
Indeed, $\mathcal{A}_g\subseteq \mathcal{B}_g$
and for every $w\in \mathcal{B}_g$ we have that
\begin{align*}
\mathscr{F}(u)&=\int_{B_1}\jac{u}\,dx \leq 
\int_{B_1}\jac{w}\,dx \leq \mathscr{F}(w),
\end{align*}
where we used that $\chi(|\nabla u|)\equiv 0$
and that $u$ is a minimizer of the thin obstacle problem
for the area functional among all competitors in 
$\mathcal{B}_g$, and not only in $\mathcal{A}_g$ 
(this follows from an approximation argument).

Finally, being the solution to the Signorini problem unique,
by Urysohn property 
we conclude that the whole family $(u_\eps)_{\eps>0}$
converges to $u$.
\end{proof}

\subsection{$W^{2,2}$ estimate}
Next we show that the solution to the penalized problem, 
as well as the solution 
to the thin obstacle problem, possess second derivatives in $L^2(B_1^+)$.
The proof is at all analogous to the standard $L^2$-theory
for quasilinear equations: we report it for readers convenience.

We recall the standard notation of the difference quotient 
\[
\tau_{h,i}f(x) := h^{-1} \big(f(x+he_i) - f(x) \big),
\]
if $x\in\{y\in B_1:\, y+he_i\in B_1\}$ and 
$\tau_{h,i}f(x):=0$ otherwise, 
where $f: B_1 \to \R$ is any measurable function and $e_i$ a coordinate 
vector, $i\in\{1,\ldots, n+1\}$. 
\begin{proposition}\label{p.H2}
The solutions $u_\eps$ to the penalized 
problems \eqref{e.beta eq}  for every $\eps>0$
and the solution $u$ to the thin obstacle problem satisfy the following property:
there exists a constant $C=C(n,L)>0$ such that, 
if either $v= u_\eps$ or $v= u$, then 
\begin{gather}\label{e.H2}
\int_{B_r^+(x_0)} |\nabla^2 v|^2\,\d x \leq \frac{C}{r^2}
\int_{B_{2r}^+(x_0)} |\nabla' v|^2\,\d x
\quad\forall\; x_0\in B_1^+\cup B_1',\;
\forall \; 0<r<\textstyle{\frac{1-|x_0|}{2}}\,.
\end{gather} 
\end{proposition}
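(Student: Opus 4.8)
The plan is to prove the estimate first for the penalized solutions $u_\eps$, with a constant independent of $\eps$, and then deduce the case $v=u$ by letting $\eps\downarrow0$. It is enough to treat $x_0\in B_1'$: when $B_{2r}(x_0)\subset B_1^+$ the bound is the classical interior $W^{2,2}$ estimate for the minimal surface equation, and the intermediate case follows by a routine covering. Throughout I use that the vector field $A=\nabla G$, with $G(p):=\sqrt{1+|p|^2}+\chi(|p|)$, has differential $DA=D^2G$ which is \emph{uniformly} elliptic and bounded on all of $\R^{n+1}$, with constants $0<\lambda\le\Lambda$ depending only on $n$ and $L$: this is precisely the role of the correction $\chi$, since $D^2\sqrt{1+|p|^2}$ degenerates as $|p|\to\infty$ but is uniformly convex on $\{|p|\le3L\}$, while for $|p|>3L$ the explicit form of $\chi$ gives $D^2(\chi(|p|))\ge\tfrac13\,\Id$; the upper bound on $DA$ is immediate. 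I also record that, testing \eqref{e.beta eq} with $\eta\in H^1_0(B_1^+)$, one gets $\div A(\nabla u_\eps)=0$ weakly in $B_1^+$, whence $u_\eps\in C^\infty(B_1^+)$ by the standard interior regularity theory for quasilinear uniformly elliptic equations.

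The heart of the matter is a tangential difference-quotient estimate. Fix $x_0\in B_1'$, $0<r<(1-|x_0|)/2$, and a radial cut-off $\zeta\in C_c^\infty(B_{3r/2}(x_0))$ with $\zeta\equiv1$ on $B_r(x_0)$ and $|\nabla\zeta|\le C/r$. For $i\in\{1,\dots,n\}$ and $|h|$ small, test \eqref{e.beta eq} with the admissible function $\eta=\tau_{-h,i}\big(\zeta^2\,\tau_{h,i}u_\eps\big)\in H^1_0(B_1)$. Since translations in the tangential direction $e_i$ map $B_1^+$ and $B_1'$ into themselves, a discrete integration by parts in both the bulk and the boundary integral gives
\[
\int_{B_1^+}\tau_{h,i}\big[A(\nabla u_\eps)\big]\cdot\nabla\big(\zeta^2\tau_{h,i}u_\eps\big)\,\d x
=-\int_{B_1'}\tau_{h,i}\big[\beta_\eps(u_\eps)\big]\,\zeta^2\,\tau_{h,i}u_\eps\,\d x'\le 0,
\]
the sign being a consequence of the monotonicity of $\beta_\eps$: $\tau_{h,i}[\beta_\eps(u_\eps)]$ and $\tau_{h,i}u_\eps$ always have the same sign. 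Writing $\tau_{h,i}[A(\nabla u_\eps)]=B_h\,\nabla\tau_{h,i}u_\eps$ with $B_h(x)=\int_0^1 DA\big(\nabla u_\eps(x)+s\,h\,\tau_{h,i}\nabla u_\eps(x)\big)\,\d s$ (symmetric, $\lambda\,\Id\le B_h\le\Lambda\,\Id$), expanding $\nabla(\zeta^2\tau_{h,i}u_\eps)$, and absorbing the cross term by Young's inequality, one obtains
\[
\int_{B_r^+(x_0)}|\nabla\tau_{h,i}u_\eps|^2\,\d x
\le\frac{C}{r^2}\int_{B_{2r}^+(x_0)}|\tau_{h,i}u_\eps|^2\,\d x
\le\frac{C}{r^2}\int_{B_{2r}^+(x_0)}|\nabla'u_\eps|^2\,\d x,
\]
with $C=C(n,L)$, the last step being the elementary inequality $\|\tau_{h,i}f\|_{L^2}\le\|\de_i f\|_{L^2}$. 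Letting $h\to0$ gives $\int_{B_r^+(x_0)}|\nabla\de_i u_\eps|^2\le\tfrac{C}{r^2}\int_{B_{2r}^+(x_0)}|\nabla'u_\eps|^2$ for every $i\le n$, which controls every second derivative of $u_\eps$ but $\de_{n+1}\de_{n+1}u_\eps$. This last one is recovered from the equation: expanding $\div A(\nabla u_\eps)=0$ pointwise in $B_1^+$, the coefficient of $\de_{n+1}\de_{n+1}u_\eps$ equals $\de_{p_{n+1}}A_{n+1}(\nabla u_\eps)\ge\lambda$, so that $|\de_{n+1}\de_{n+1}u_\eps|\le C\sum_{(i,j)\ne(n+1,n+1)}|\de_i\de_j u_\eps|$ on $B_1^+$ with $C=C(n,L)$, and integrating over $B_r^+(x_0)$ yields \eqref{e.H2} for $v=u_\eps$ with a constant $C=C(n,L)$ independent of $\eps$.

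To pass to the limit, recall from the proof of Lemma~\ref{l.convergenza a sol} that $u_\eps\rightharpoonup u$ in $W^{1,2}(B_1)$ and $\mathscr{F}(u_\eps)\le\mathscr{E}_\eps(u_\eps)\le\mathscr{E}_\eps(u)=\mathscr{F}(u)$, so that $\mathscr{F}(u_\eps)\to\mathscr{F}(u)$; since $\mathscr{F}(w)=\int_{B_1}G(\nabla w)$ with $G$ uniformly convex, this upgrades the convergence to $\nabla u_\eps\to\nabla u$ strongly in $L^2(B_1)$. Hence $\int_{B_{2r}^+(x_0)}|\nabla'u_\eps|^2\to\int_{B_{2r}^+(x_0)}|\nabla'u|^2$, while the uniform bound just proved gives $\nabla^2u_\eps\rightharpoonup\nabla^2u$ in $L^2(B_r^+(x_0))$; lower semicontinuity of the $L^2$-norm then yields \eqref{e.H2} for $v=u$ with the same constant.

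The only genuinely non-routine point is the boundary integral on $B_1'$ generated by the difference-quotient test function: this is exactly where the penalization pays off, the monotonicity of $\beta_\eps$ conferring on it the favourable sign needed to discard it, and it is also the reason only tangential increments are used, the normal second derivative being then obtained algebraically from the equation. The rest is the classical $L^2$-theory for quasilinear equations, reproduced here for completeness.
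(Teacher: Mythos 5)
Your proof is correct and follows essentially the same route as the paper's: tangential difference quotients $\eta=\tau_{-h,i}(\zeta^2\tau_{h,i}u_\eps)$ with $i\le n$, the favourable sign of the boundary term from the monotonicity of $\beta_\eps$, recovery of $\de_{n+1}^2 u_\eps$ from the non-degeneracy of $\de_{p_{n+1}}A^{n+1}$, and passage to the limit in $\eps$. The one place where you are actually more careful than the paper is the final limit: the paper simply invokes Lemma~\ref{l.convergenza a sol} (which only asserts weak $W^{1,2}$ convergence), whereas you correctly note that to pass the right-hand side $\int_{B_{2r}^+}|\nabla' u_\eps|^2$ to $\int_{B_{2r}^+}|\nabla' u|^2$ one needs strong $L^2$ convergence of $\nabla u_\eps$, and you supply the standard argument (convergence of the strictly convex energies $\mathscr{F}(u_\eps)\to\mathscr{F}(u)$ together with weak convergence upgrades to strong convergence of gradients). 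This is a genuine, if minor, gap-filling of the paper's terse phrasing rather than a different proof.
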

\begin{proof}
The result is classical if $x_0\in B_1^+$ and
$B_r(x_0)\subset\subset B_1^+$. 
We shall prove only the case in which $x_0\in B_1'$,
and the general case follows by a covering argument.
Without loss of generality we may assume $x_0=\underline{0}$. 

We provide first an estimate for the horizontal derivatives of the weak gradient 
of $u_\eps$. Let $\zeta \in C_c^1(B_{2r})$, $2r<1$, be a test function with 
$\zeta\equiv 1$ in $B_r$ and $|\nabla \zeta|\leq C\,r^{-1}$
for some dimensional constant $C>0$.
We test \eqref{e.beta eq} with 
$\eta := \tau_{-h, i} \big(\zeta^2 \,\tau_{h,i} u_\eps \big)$, 
with $|h|<1-2r$ and $i\in\{1,\ldots, n\}$.
For convenience, in the following computation we omit to write the index $i\in\{1,\ldots, n\}$ 
in the notation of the difference quotients. We start off noticing that the first addend in \eqref{e.beta eq} 
rewrites as 
\begin{align}\label{e: test}
\int_{B_1^+} A(\nabla u_\eps) \cdot \nabla \eta \,\d x &=
-\int_{B_1^+} \tau_h\big(A(\nabla u_\eps)\big)
\cdot \nabla \big(\zeta^2 \,\tau_{h} u_\eps \big)\,\d x,
\end{align}
where we used the basic integration by parts formula for discrete derivatives
\[
\int (\tau_hf)\, \varphi\,\d x 
=- \int f\,(\tau_{-h}\varphi)\,\d x\quad 
\text{ $\forall f, \varphi$ measurable, $\varphi$ having compact support}\,.
\]
We now compute as follows: set
\[
\psi(t) := A\big((1-t)\nabla u_\eps(x)+t 
\nabla u_\eps(x+h e_i)\big);
\]
then,
\begin{align*}
\tau_h\big(A(\nabla u_\eps)\big)
& =\frac 1h \int_0^1\psi'(t)\,\d t
= \int_0^1 \nabla {A}\big( (1-t)\nabla u_\eps(x)+t 
\nabla u_\eps(x+h e_i)\big)\,\d t \, \tau_h(\nabla u_\eps)\\
& = : \mathbb{A}_\eps^{h}(x) \tau_h(\nabla u_\eps).
\end{align*}
Note that there exist constants $0<\lambda <\Lambda$
(depending on $L=\Lip(u)$)
such that 
\[
\lambda\,\Id_{n+1}\leq 
\mathbb{A}_\eps^{h}(x)
\leq \Lambda\,\Id_{n+1}
\quad\forall\; x\in B_1^+,
\]
because 
\begin{align*}
\nabla {A}(p) &=
\nabla\left(\frac{p}{\sqrt{1+|p|^2}} +
\chi'(|p|)\,\frac{p}{|p|}\right)\\
&=
\frac{\Id_{n+1}}{(1+|p|^2)^{\sfrac32}}+
\big((1+|p|^2)^{-\sfrac32} + \chi'(|p|)\,|p|^{-3}\big)
\big(|p|^2\, \Id_{n+1} - p\otimes p\big)
+\chi''(|p|)\, \frac{p\otimes p}{|p|^2}
\end{align*}
is uniformly elliptic and bounded.
Therefore, we can rewrite \eqref{e: test} as 
\begin{align*}
\int_{B_1^+} A(\nabla u_\eps) \cdot \nabla \eta \,\d x
& =-\int_{B_1^+} \mathbb{A}_\eps^{h}\,\tau_h(\nabla u_\eps)
\cdot \nabla \big( \zeta^2\,\tau_{h} u_\eps \big)\,\d x\\
& =-\int_{B_1^+} \Big(\zeta^2\,\mathbb{A}_\eps^{h}\,\tau_h(\nabla u_\eps)
\cdot \tau_h(\nabla u_\eps)\,
+2\zeta\,(\tau_{h} u_\eps) \,
\mathbb{A}_\eps^{h}\,\tau_h(\nabla u_\eps)
\cdot \nabla \zeta\Big)\,\d x.
\end{align*}
On the other hand, by the monotonicity of $\beta_\eps$ the second addend
in \eqref{e.beta eq} is non-positive. Indeed, being $\beta_\eps$ increasing, we have
\begin{align*}
&\int_{B_1'} \beta_\eps(u_\eps) \tau_{-h}\big(\zeta^2\,\tau_{h} u_\eps\big)\,\d x' =
-\int_{B_1'} \tau_{h}\big(\beta_\eps(u_\eps)\big) (\tau_{h} u_\eps)\, 
\zeta^2\,\d x'\\
&=-\int_{B_1'} \frac{\beta_\eps(u_\eps(x'+he_i))-\beta_\eps(u_\eps(x'))}{h} 
\,\frac{u_\eps(x'+he_i) - u_\eps(x')}{h} \,\zeta^2\,\d x'\leq 0.
\end{align*}
Thus, from \eqref{e.beta eq} we infer that 
\begin{align*}
\int_{B_1^+} \Big(\mathbb{A}_\eps^{h}\,\tau_h(\nabla u_\eps)
\cdot \tau_h(\nabla u_\eps)\, \zeta^2
+2\zeta\,\tau_{h}(u_\eps)\,
\mathbb{A}_\eps^{h}\,\tau_h(\nabla u_\eps)
\cdot \nabla \zeta\Big)\,\d x\leq 0.
\end{align*}
Hence, in view of Cauchy-Schwarz inequality and of the ellipticity of $\mathbb{A}_\eps^{h}$
we conclude that
\begin{align*}
\int_{B_1^+} |\tau_h (\nabla u_\eps)|^2\, \zeta^2\,\d x
\leq 4\frac{\Lambda}{\lambda}\int_{B_1^+} 
|\tau_{h} u_\eps|^2\,|\nabla \zeta|^2\,\d x.
\end{align*}
The latter estimate implies that $\nabla u_\eps$
has weak $i$-th derivative in $L^2(B_r^+)$,
for all $i\in\{1,\dots,n\}$, $r<\sfrac12$, with 
\begin{equation}\label{e.stima finale W22}
\int_{B_r^+}|\de_i(\nabla u_\eps)|^2\,\d x 
\leq \frac{C}{r^2} \int_{B_{2r}^+} |\partial_i u_\eps|^2\,\d x,
\end{equation}
for a constant $C>0$ depending only on $L$. 

To conclude the proof for $v=u_\eps$ it suffices to prove that $\de_{n+1}u_\eps$ 
has $(n+1)$-th weak derivative in $B_1^+$. 
Writing $A(p)= (A^1(p),\ldots, A^{n+1}(p))$, 
we have that
\begin{align*}
\de_jA^i(\nabla u_\eps) \de_{ij} u_\eps =0.
\end{align*}
Moreover, $\lambda\leq \de_{n+1}A^{n+1}(p) \leq \Lambda$ for every
$p\in \R^{n+1}$, from which we deduce that
\begin{equation}\label{e.derivata normale}
\de_{n+1}^2u_\eps =
\frac{1}{\de_{n+1}A^{n+1}(\nabla u_\eps)}
\sum_{(i,j)\neq(n+1,n+1)} \de_jA^i(\nabla u_\eps) \de^2_{i,j} u_\eps
\in L^2_\loc(B_1^+)\,.
\end{equation}
Hence, from \eqref{e.stima finale W22} 
and the fact that $\nabla A$ is bounded,
we get the estimate 
 \begin{equation}\label{e.stima n+1 W22-2}
 \int_{B_r^+}|\de_{n+1}(\nabla u_\eps)|^2\,\d x 
 \leq C\sum_{i=1}^n 
 \int_{B_r^+}|\nabla (\de_i u_\eps)|^2\,\d x \leq 
 \frac{C}{r^2} \int_{B_{2r}^+} |\nabla' u_\eps|^2\,\d x\,,
 \end{equation}
with $C=C(n,L)>0$.
Being estimates \eqref{e.stima finale W22} and \eqref{e.stima n+1 W22-2} uniform in $\eps$, 
in view of Lemma~\ref{l.convergenza a sol}, we can pass to the limit as $\eps\downarrow 0$ 
and infer that the same estimates hold for $u$ as well.
\end{proof}

\subsection{$C^{1,\alpha}$ estimate}
Next we prove that the minimizer $u$ of the Signorini problem
has weak derivatives in suitable De Giorgi classes on the flat 
part of the boundary. Here, we do follow the approach by Ural'tesva \cite{Ural86}
in conjunction with the one-sided continuity of the derivatives shown
in Proposition~\ref{p.C1}. In particular, the latter result 
is instrumental to establish the ensuing estimate \eqref{e.DG} for $\pm\de_{n+1}u$.

\begin{proposition}\label{p.C1alfa}
Let $u$ be the solution to the thin obstacle problem, 
then for some constant $C=C(n,L)>0$ the
function $v=\pm \partial_i u$, $i\in\{1,\ldots, n+1\}$,
satisfies for all $k\geq 0$
\begin{equation}\label{e.DG}
\int_{B_r^+(x_0) \cap \{v >k\}}|\nabla v|^2\,\d x
\leq \frac{C}{r^2} \int_{B_{2r}^+(x_0)}(v- k)_+^2\,\d x
\quad \forall\; x_0\in B_1', \;0<r<\textstyle{\frac{1-|x_0|}2}.
\end{equation}
\end{proposition}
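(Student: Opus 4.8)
The plan is to establish the Caccioppoli-type inequality \eqref{e.DG} by testing the Euler--Lagrange system with a truncated test function, treating the tangential derivatives $v = \pm\de_i u$, $i\in\{1,\dots,n\}$, and the normal derivative $v = \pm\de_{n+1}u$ somewhat separately, and by passing through the penalized problems \eqref{e.beta eq} where all quantities are smooth. Concretely, I would fix $x_0\in B_1'$, reduce to $x_0=\underline 0$, and pick a cutoff $\zeta\in C_c^1(B_{2r})$ with $\zeta\equiv 1$ on $B_r$ and $|\nabla\zeta|\le C/r$. Working with $u_\eps$, one differentiates the equation in the direction $e_j$ (for $j\in\{1,\dots,n\}$ this is legitimate since the problem is translation invariant in the tangential directions and $u_\eps$ has the $W^{2,2}$ regularity of Proposition~\ref{p.H2}): setting $w=\de_j u_\eps$, the function $w$ solves $\int_{B_1^+}\mathbb A_\eps(\nabla u_\eps)\nabla w\cdot\nabla\eta\,\d x + \int_{B_1'}\beta_\eps'(u_\eps)\,w\,\eta\,\d x' = 0$ for all $\eta\in H^1_0(B_1)$, where $\mathbb A_\eps=\nabla A(\nabla u_\eps)$ is uniformly elliptic and bounded (as computed in the proof of Proposition~\ref{p.H2}), and $\beta_\eps'\ge 0$. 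Testing with $\eta=\zeta^2 (w-k)_+$ and using that $\beta_\eps'(u_\eps)\,w\,(w-k)_+\ge -k\,\beta_\eps'(u_\eps)\,(w-k)_+$ — wait, more cleanly: the boundary term is $\int_{B_1'}\beta_\eps'(u_\eps)\,\de_j u_\eps\,\zeta^2(\de_j u_\eps - k)_+\,\d x'$, and since for $j$ tangential $\beta_\eps(u_\eps)$ vanishes wherever $u_\eps\ge 0$ while on $\{u_\eps<0\}$ one has $\de_{n+1}u_\eps$-type structure — I would instead note that the cleanest route for tangential derivatives is that the boundary integrand, after the standard manipulation $\int \beta_\eps'(u_\eps)(\de_j u_\eps)^2\zeta^2 \ge 0$ modulo a $k$-term controlled by the monotonicity, has a favorable sign, so it can be dropped; then Cauchy--Schwarz and absorption yield $\int_{B_r^+\cap\{w>k\}}|\nabla w|^2 \le \frac{C}{r^2}\int_{B_{2r}^+}(w-k)_+^2$. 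Passing $\eps\downarrow 0$ via Lemma~\ref{l.convergenza a sol} and the uniform $W^{2,2}$ bound gives the claim for $v=\pm\de_i u$, $i\le n$.

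The genuinely delicate case is the normal derivative $v=\pm\de_{n+1}u$, since one cannot simply differentiate the boundary condition in the $x_{n+1}$ direction, and the sign of the boundary term is not immediately manifest. Here the key input is Proposition~\ref{p.C1}: $\de_{n+1}u$ extends continuously to $B_1^+\cup B_1'$, and on $B_1'$ it satisfies $\de_{n+1}u\le 0$ with $\de_{n+1}u=0$ on $\Lambda(u)$, while $u>0$ (hence by the equation $\de_{n+1}u=0$) precisely on $B_1'\setminus\Lambda(u)$ — thus $\de_{n+1}u\le 0$ everywhere on $B_1'$, so $(\de_{n+1}u - k)_+$ and $(-\de_{n+1}u - k)_+$ have traces on $B_1'$ that vanish on large parts of it. For $v=-\de_{n+1}u$ and $k\ge 0$: the function $(v-k)_+$ is supported where $v>k\ge 0$, i.e. where $-\de_{n+1}u>0$, which on $B_1'$ forces $u=0$, i.e. the support of the trace lies in $\Lambda(u)$, where $u$ satisfies a Dirichlet condition; reflecting, $v$ solves a pure minimal-surface-type equation across that portion and the boundary term literally disappears. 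For $v=+\de_{n+1}u$: on $B_1'$ one has $v\le 0\le k$, so $(v-k)_+\equiv 0$ on $B_1'$ and the boundary term again vanishes. In either case one is left with an interior Caccioppoli inequality for a solution of a linear uniformly elliptic equation with a one-sided obstacle effect that has been shown to be inert on the support of the test function, and \eqref{e.DG} follows by the same testing-and-absorption as above. The honest technical point to be careful about is justifying the test function: $(v-k)_+\zeta^2$ need not be admissible at $\eps>0$ because $v=\pm\de_{n+1}u_\eps$ does not obviously have the sign structure on $B_1'$ that the limit $u$ has; so I would carry out the argument for the normal derivative directly on $u$, using the $W^{2,2}$ regularity of $u$ from Proposition~\ref{p.H2}, the Euler--Lagrange system \eqref{e.signorini bc}, and the continuity from Proposition~\ref{p.C1}, rather than through the penalization.

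The main obstacle, then, is precisely this handling of the boundary term for the normal derivative: making rigorous that $(\pm\de_{n+1}u - k)_+$ has a trace on $B_1'$ supported in a region where the effective boundary condition is Dirichlet (so the term is zero) or is identically zero there, and that this test function is admissible in the weak formulation obtained by differentiating \eqref{e.signorini bc} tangentially within $B_1^+$. Once that is in place, the remaining steps — differentiating the equation in tangential directions to get a linear system with uniformly elliptic, bounded coefficients $\mathbb A_\eps$ (or $\mathbb A:=\nabla A(\nabla u)$ in the limit), testing with $\zeta^2(v-k)_+$, using $|\nabla\zeta|\le C/r$, Cauchy--Schwarz, and absorbing $\int \zeta^2|\nabla v|^2$ into the left side — are entirely routine and parallel the computation already performed in the proof of Proposition~\ref{p.H2}. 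I would also record, for use in the De Giorgi iteration of the next subsection, that the same inequality holds with $B_r^+(x_0)$, $B_{2r}^+(x_0)$ replaced by any pair of concentric balls $B_\rho^+(x_0)\subset B_\sigma^+(x_0)$ with $\rho<\sigma<1-|x_0|$ and constant $C/(\sigma-\rho)^2$, which is the form in which \eqref{e.DG} will be applied.
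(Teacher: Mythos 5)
Your outline is correct and, for the tangential derivatives $v=\pm\de_i u$, $i\le n$, it coincides with the paper's proof: differentiate the penalized Euler--Lagrange equation \eqref{e.beta eq} in a tangential direction, test with $\zeta^2(\de_i u_\eps - k)_+$, observe that the boundary term $\int_{B_1'}\beta_\eps'(u_\eps)\,\de_iu_\eps\,(\de_iu_\eps-k)_+\,\zeta^2\,\d x'\ge 0$ (because $\beta_\eps'\ge 0$ and the factor $\de_iu_\eps(\de_iu_\eps-k)_+$ is pointwise nonnegative for $k\ge0$), drop it, and absorb; then pass $\eps\downarrow0$ using Proposition~\ref{p.H2}.

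For the normal derivative your route genuinely diverges from the paper's. The paper does \emph{not} abandon the penalization: it tests the penalized equation with a hybrid test function $\zeta_\delta=\phi^2\gamma_\delta(-\de_{n+1}u-k)$ built from the \emph{limit} $u$, not from $u_\eps$, where $\gamma_\delta$ is a smooth truncation of the positive part, and then lets $\eps\downarrow0$ before $\delta\downarrow0$. This keeps the weak formulation clean (it lives at the $\eps$-level) while still exploiting the sign structure available only in the limit. Two points you gloss over which are exactly where the work happens in the paper. First, for $k>0$ one needs not merely that $\{\de_{n+1}u<-k\}\cap B_1'$ lies in $\Lambda(u)$, but that it is \emph{compactly} contained in $\Lambda(u)$ (so that $\zeta_\delta$ is supported away from the free boundary, where $u$ is a genuine solution of the Dirichlet problem and smooth up to $B_1'$); this is precisely where the continuity of $\de_{n+1}u$ from Proposition~\ref{p.C1} together with $\de_{n+1}u=0$ on $\Gamma(u)$ is used. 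Second, "the boundary term literally disappears" needs the concrete identity: after rewriting the boundary contribution via \eqref{e.beta prob} it becomes $\int_{B_1'}A'(\nabla u)\cdot\nabla'\zeta_\delta\,\d x'$, and this vanishes because $\nabla'u=0$ a.e.~on $\Lambda(u)\cap\spt\zeta_\delta$ (trace in $W^{1,2}$), hence $A'(\nabla u)=0$ there. Finally, the case $k=0$ is not covered by your argument as stated (the closure of $\{\de_{n+1}u<0\}\cap B_1'$ can reach $\Gamma(u)$); the paper recovers it by letting $k\downarrow0$ in the inequality proved for $k>0$. Your proposal to bypass the penalization and differentiate \eqref{e.signorini bc} directly in the normal direction is plausible in spirit but would need a careful justification of the resulting weak formulation for $\de_{n+1}u$ (the Signorini boundary condition does not differentiate cleanly across the free boundary), which is exactly the friction the paper's hybrid construction is designed to avoid.
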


\begin{proof}
We start off writing the equation satisfied by the horizontal derivatives
of the solution to the penalized problem \eqref{e.beta eq} and by testing it 
with $\eta=\de_i \zeta$, $i\in\{1,\ldots, n\}$, for $\zeta\in W^{2,2}(B_1)$ 
even symmetric with respect to $x_{n+1}$ and
$\textrm{spt}\zeta\cap\partial B_1=\emptyset$:
\begin{align}\label{e.test C1alfa}
0&=\int_{B_1^+} \de_i\left(
A(\nabla u_\eps)\right)
\cdot \nabla \zeta\,\d x+ \int_{B_1'}\de_i[\beta_\eps(u_\eps)]\,\zeta\,\d x'\notag\\
& = 
\int_{B_1^+} 
\nabla A(\nabla u_\eps)\, \nabla (\de_i u_\eps) \cdot \nabla \zeta\,\d x
+ \int_{B_1'}\beta_\eps'(u_\eps) \de_iu_\eps\,\zeta\,\d x'.
\end{align}
Note that \eqref{e.test C1alfa} makes sense as soon as 
$\zeta\in W^{1,2}(B_1^+)$ with  $\spt\zeta\cap(\partial B_1)^+=\emptyset$, 
thanks to the integrability estimates
in Proposition \ref{p.H2}.
Therefore, as $u_\eps\in W^{2,2}(B_1^+)$ we can choose 
$\zeta_\eps:= (\de_i u_\eps - k)_+\, \phi^2$ for $k\geq 0$ and having 
fixed $\phi\in C^1_c(B_1)$, because
$\zeta_\eps\in W^{1,2}(B_1^+)$ with $\spt\zeta_\eps\cap(\partial B_1)^+=\emptyset$.
With this choice at hand, note then that 
\begin{align}\label{e.beta'}
\int_{B_1'}\beta_\eps'(u_\eps) \de_iu_\eps\,\zeta_\eps\,\d x'
& = \int_{B_1'}\beta_\eps'(u_\eps)
\de_iu_\eps\,(\de_i u_\eps - k)_+\, \phi^2\,\d x'
\geq 0.
\end{align}
For what concerns the remaining terms, we recall that
$\nabla \zeta_\eps = \phi^2\nabla (\de_i u_\eps)
\chi_{\{\de_i u_\eps >k\}}
+ 2\phi(\de_i u_\eps - k)_+\nabla \phi$.
Therefore, we have that
\begin{align*}
0&\geq 
\int_{B_1^+\cap \{\de_i u_\eps >k\}} 
\phi^2 \nabla A(\nabla u_\eps)\, \nabla (\de_i u_\eps) \cdot
\nabla (\de_i u_\eps)\, \d x\\
&
\quad+\int_{B_1^+} 2\phi(\de_i u_\eps - k)_+
\nabla A(\nabla u_\eps)\, \nabla (\de_i u_\eps) \cdot \nabla \phi
\,\d x.
\end{align*}
Then, a standard argument implies
\begin{align*}
\int_{B_1^+ \cap \{\de_i u_\eps >k\}}&
\phi^2|\nabla (\de_iu_\eps)|^2\,\d x  \leq 
4\frac{\Lambda}{\lambda}
\int_{B_1^+}(\de_i u_\eps - k)_+^2
\,|\nabla \phi|^2\,\d x
\end{align*}
In particular, for every $k\geq 0$ 
and for every $x_0\in B_1'$ and 
$0<2r<1-|x_0|$
if $\phi\in C^1_c(B_{2r}(x_0))$ and $\phi\equiv 1$ 
on $B_r(x_0)$ with $|\nabla\phi|\leq \sfrac{C}r$
\begin{equation}\label{e.DG eps}
\int_{B_r^+ \cap \{\de_i u_\eps >k\}}
|\nabla (\de_iu_\eps)|^2\,\d x
\leq \frac{C}{r^2} \int_{B_{2r}^+}(\de_i u_\eps - k)_+^2
\,\d x\,,
\end{equation}
for some $C=C(L)>0$. In exactly the same way, 
by testing \eqref{e.beta eq} with 
$\zeta_\eps:= (-\de_i u_\eps - k)_+\, \eta^2$, we derive the analogous estimate
\begin{equation}\label{e.DG- eps}
\int_{B_r^+ \cap \{\de_i u_\eps <-k\}}|\nabla (\de_iu_\eps)|^2\,\d x
\leq \frac{C}{r^2} \int_{B_{2r}^+}(-\de_i u_\eps - k)_+^2\,\d x\,,
\end{equation}
for all $k\geq 0$ and $i\in\{1,\ldots, n\}$.
Estimate \eqref{e.DG} for $\pm\partial_i u$, with $i=1,\dots, n$, follows at once 
by passing to the limit as $\eps\downarrow 0$ in \eqref{e.DG eps} and \eqref{e.DG- eps}, 
respectively.

For what concerns the partial derivative in direction $n+1$,
we test the equation
\eqref{e.test C1alfa} with $\eta=\de_{n+1}\zeta$,
for $\zeta\in W^{2,2}(B_1^+)$ 
with $\textrm{spt}\zeta\cap(\partial B_1)^+=\emptyset$:
\begin{align}\label{e.test C1alfa n+1}
0=&\int_{B_1^+}\de_{n+1}
\left(A(\nabla u_\eps)\right)\cdot \nabla \zeta\,\d x
+\int_{B_1'} A(\nabla u_\eps)\cdot \nabla \zeta\,\d x'
- \int_{B_1'}\beta_\eps(u_\eps)\,\de_{n+1}\zeta\,\d x'\notag\\
=& 
\int_{B_1^+}\de_{n+1}
\left(A(\nabla u_\eps)\right)\cdot \nabla \zeta\,\d x
+\int_{B_1'} A'(\nabla u_\eps)\cdot \nabla' \zeta\,\d x',
\end{align}
where we set $A'(p) := (A^1 (p), \ldots, A^n(p))$.
The last equality holds thanks to Euler-Lagrange 
condition induced by \eqref{e.beta eq}:
\begin{equation}\label{e.beta prob}
\begin{cases}
\div\big(A(\nabla u_\eps)\big) = 0
& \textup{in }\;B_1^+,\\
A^{n+1}(\nabla u_\eps)
= \beta_\eps\big(u_\eps\big)
& \textup{on }\;B_1'.
\end{cases}
\end{equation}
For $0<k\leq\|\de_{n+1}u \|_{L^\infty(B_1^+)}$ set
\[
\zeta_{\delta} := \phi^2 \gamma_\delta(-\de_{n+1}u- k),
\]
where $\delta>0$ will be suitably chosen,
$\gamma_\delta\in C^\infty(\R)$ is an increasing function 
such that $\gamma_\delta(t)=0$ for $t\leq 0$, $\gamma_\delta(t)>0$ for $t>0$,
$\gamma_\delta(t)=t-\delta$ for $t\geq 2\delta$, 
$|\gamma_\delta'(t)|\leq1$ (such a function can be easily exhibited),
and 
$\phi\in C^\infty_c(B_{2r}(x_0))$, $\phi|_{B_r(x_0)}=1$,
$|\nabla\phi|\leq \sfrac{C}r$.
We use 
$\de_{n+1}u\in C^0(B_1^+\cup B_1')$ (cf. Proposition~\ref{p.C1})   
to infer that for $k>0$ 
the set $B_1'\cap \{\de_{n+1}u < -k\}$ is an
open set with compact closure in $\Lambda(u)$
(recall that $\partial_{n+1}u=0$ on $B_1'\setminus\Lambda(u)$).
This implies that, if $\delta>0$ is sufficiently small,
$\zeta_\delta\in C^\infty(B_1^+)$ with 
$\textrm{spt}\zeta_\delta\cap(\partial B_1)^+=\emptyset$.
Indeed, $u\in C^\infty(B_{r}^+(y_0))$
for all $y_0\in  B_1'\cap\spt\zeta_\delta$
and $r<\dist\big(B_1'\cap \{\de_{n+1}u \leq -k\}, 
B_1'\cap \{\de_{n+1}u =0\}\big)$, 
being $u$ itself minimum of the area
problem with null Dirichlet boundary conditions on 
$B_{r}'(y_0)$.
Taking $\zeta=\zeta_\delta$ we evaluate each addend in 
\eqref{e.test C1alfa n+1} separately. To begin with, 
the first term rewrites as
\begin{align*}
I^{\eps,\delta}:=& \int_{B_1^+}
\de_{n+1}\left(A(\nabla u_\eps)\right)\cdot \nabla \zeta_\delta\,\d x
= 2\int_{B_1^+} \phi\, \gamma_\delta (-\de_{n+1}u-k)
\nabla A(\nabla u_\eps) \nabla(\de_{n+1}u_\eps)
\cdot \nabla \phi\, \d x\\
&\quad - \int_{B_1^+} \phi^2\gamma_\delta'(-\de_{n+1}u-k) 
\nabla A(\nabla u_\eps) \nabla(\de_{n+1}u_\eps)
\cdot \nabla (\de_{n+1}u)\,\d x.
\end{align*}
Taking the limits as $\eps\downarrow 0$ in each term above,
since $\nabla A$ is a Lipschitz function and $\nabla u_\eps \to
\nabla u$ in $L^2$ and $\nabla(\de_{n+1}u_\eps)\weak
\nabla(\de_{n+1}u)$ in $L^2$, we conclude that 
\begin{align*}
\lim_{\eps\to 0} I^{\eps, \delta} &=
2\int_{B_1^+} \phi\, \gamma_\delta (-\de_{n+1}u-k)
\nabla A(\nabla u) \nabla(\de_{n+1}u)
\cdot \nabla \phi\, \d x\\
&\quad - \int_{B_1^+} \phi^2\gamma_\delta'(-\de_{n+1}u-k) 
\nabla A(\nabla u) \nabla(\de_{n+1}u)
\cdot \nabla (\de_{n+1}u)\,\d x.
\end{align*}
Moreover, since $\gamma_{\delta}(-\de_{n+1} u -k )
\to(-\de_{n+1} u - k)_+$ strongly in $W^{1,2}(B_1^+)$
as $\delta \downarrow 0$, then we infer
\begin{align*}
\lim_{\delta\downarrow 0}\lim_{\eps\downarrow 0}I^{\eps,\delta}=&
2\int_{B_1^+} \phi\, (-\de_{n+1}u-k)_+
\nabla A(\nabla u) \nabla(\de_{n+1}u)
\cdot \nabla \phi\, \d x\\
&\quad - \int_{B_1^+\cap \{\de_{n+1}u\leq -k\}}
\phi^2 \nabla A(\nabla u) \nabla(\de_{n+1}u)
\cdot \nabla (\de_{n+1}u)\,\d x.
\end{align*}
Similarly, to deal with the second addend in 
\eqref{e.test C1alfa n+1} we argue as follows: 
as $\nabla u_\eps\to \nabla u$ strongly in 
$L^{2}_{\textup{loc}}(B_1')$ by Proposition~\ref{p.H2} 
and the compactness of the trace operator, 
the Lipschitz continuity of $A'$
implies for all $\delta>0$ that 
\begin{align*}
\lim_{\eps\downarrow 0} 
\int_{B_1'} A'(\nabla u_\eps)\cdot \nabla'\zeta_\delta\,\d x'
=\int_{B_1'} A'(\nabla u)\cdot \nabla'\zeta_\delta=0\,.
\end{align*}
In the last equality we have used that
$B_1'\cap\spt\zeta_\delta\subset\subset\Lambda(u)$,
and being (the trace of) $u$ in $W^{1,2}(B_1')$ by Proposition~\ref{p.H2}, 
then $\nabla'u=0$ $\cL^n$ a.e. on 
$B_1'\cap\spt\zeta_\delta$, so that 
$A'(\nabla u)=0$ $\cL^n$ a.e. on 
$B_1'\cap\spt\zeta_\delta$.

Hence, by using the ellipticity of $\nabla A$ we infer that
for every $k> 0$, by H\"older's inequality 
\begin{equation}\label{e.DG n+1}
\int_{B_r^+ \cap \{\de_{n+1} u <-k\}}
|\nabla (\de_{n+1}u)|^2\,\d x
\leq \frac{C}{r^2} \int_{B_{2r}^+}(-\de_{n+1} u - k)_+^2
\,\d x\,.
\end{equation}

Clearly, \eqref{e.DG n+1} holds for $k= 0$ by letting 
$k\downarrow 0$ in the inequality itself, and also for 
$k>\|\partial_{n+1}u\|_{L^\infty(B_1^+)}$ being 
trivial in those cases.
The case of $\de_{n+1}u$ is treated similarly.
\end{proof}

We are now ready to establish the claimed
one-sided $C^{1,\alpha}$ regularity of $u$:
the argument follows closely Ural'tesva \cite[Lemmata~2, 3]{Ural86}
and Giaquinta and Giusti \cite{GiaGiu75}.

\begin{corollary}
Let $u$ be the solution to the thin obstacle problem,
then $u\in C^{1,\alpha}_{loc}(B_1^+\cup B_1')$
for some $\alpha\in(0,1)$.
\end{corollary}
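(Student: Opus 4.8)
The plan is to deduce the H\"older continuity of $\nabla u$ from the De Giorgi class estimate \eqref{e.DG} of Proposition~\ref{p.C1alfa} together with the one-sided $C^1$ regularity of Proposition~\ref{p.C1}, following the scheme of Ural'tseva \cite[Lemmata~2,~3]{Ural86} and Giaquinta--Giusti \cite{GiaGiu75}. First I would localise the problem near the free boundary: away from $\Gamma(u)$, i.e.\ on $B_1^+\cup\big(B_1'\setminus\Gamma(u)\big)$, the function $u$ is smooth, since at a point of $B_1'\setminus\overline{\Lambda(u)}$ it solves the minimal surface equation with homogeneous Neumann datum and at a point of the interior of $\Lambda(u)$ with homogeneous Dirichlet datum, and in both cases its even (respectively odd) reflection across $B_1'$ is an analytic solution of the minimal surface equation, while in the interior of $B_1^+$ the equation is uniformly elliptic because $|\nabla u|\le L$. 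Hence it suffices to exhibit, at each $x_0\in\Gamma(u)$, a H\"older modulus for $\nabla u$ on a neighbourhood of $x_0$ with exponent and constant depending only on $n$ and $L$ and locally uniform in $x_0$. The decisive extra information is that $\nabla u(x_0)=\underline{0}$ at every free boundary point: $\partial_{n+1}u(x_0)=0$ is \eqref{e.derivata normale zero} (after translating $x_0$ to the origin), and $\nabla'u(x_0)=\underline{0}$ follows because $u|_{B_1'}\geq 0$ vanishes at $x_0$ (by continuity and $\Gamma(u)\subset\overline{\Lambda(u)}$) while $u|_{B_1'}\in C^1(B_1')$ by Theorem~\ref{t.frehse}, so that $x_0$ is an interior minimum point of $u|_{B_1'}$.

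Next I would recast the derivatives as functions lying in De Giorgi classes on the whole ball $B_1$. For $i\in\{1,\dots,n\}$ the derivative $\partial_i u$ is even in $x_{n+1}$; for $i=n+1$ I let $V$ denote the even reflection of $\partial_{n+1}u|_{B_1^+}$ across $B_1'$, which coincides with $\partial_{n+1}u$ on $B_1^+\cup B_1'$. In either case $V\in W^{1,2}_{\loc}(B_1)$ by the $W^{2,2}$ estimate of Proposition~\ref{p.H2}. Reflecting \eqref{e.DG} (applied both to $\partial_iu$ and to $-\partial_iu$) across $B_1'$, and using that $|\nabla V|^2$ and $(V-k)_+$ are even in $x_{n+1}$, one obtains, for every $k\geq 0$,
\[
\int_{B_r(x_0)\cap\{V>k\}}|\nabla V|^2\,\d x\;\leq\;\frac{C}{r^2}\int_{B_{2r}(x_0)}(V-k)_+^2\,\d x,\qquad x_0\in B_1',
\]
and the same inequality with $V$ replaced by $-V$; on balls $B_{2r}(x_0)\subset\subset B_1^\pm$ the full analogue valid for all $k\in\R$ is the classical interior Caccioppoli inequality for the linear, uniformly elliptic, continuous coefficient equation solved there by $\pm\partial_i u$. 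Thus $V$ belongs to the De Giorgi class for \emph{nonnegative} levels near $B_1'$, and to the ordinary De Giorgi class in the interior of $B_1^\pm$.

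Finally I would run De Giorgi's oscillation decay iteration centred at a free boundary point $x_0\in\Gamma(u)$. The restriction $k\geq 0$ is harmless precisely because $V(x_0)=0$: indeed $\inf_{B_r(x_0)}V\leq 0\leq\sup_{B_r(x_0)}V$ for every $r$, so the truncation levels entering the iterations that bound $\sup_{B_r(x_0)}V$ and $\sup_{B_r(x_0)}(-V)$ from above may all be taken nonnegative, which makes the one-sided inequalities for $V$ and for $-V$ applicable. This produces a geometric decay $\operatorname{osc}_{B_{r/2}(x_0)}V\leq\theta\,\operatorname{osc}_{B_r(x_0)}V$ with some $\theta=\theta(n,L)<1$, hence $\operatorname{osc}_{B_r(x_0)}V\leq C\,r^\alpha$, and since $V(x_0)=0$ also $|V(x)|\leq C\,|x-x_0|^\alpha$ for $x$ near $x_0$, with $\alpha=\alpha(n,L)$ and $C=C(n,L)$. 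Combining this with the smoothness of $u$ away from $\Gamma(u)$ through a routine covering argument yields $\nabla u\in C^{0,\alpha}_{\loc}(B_1^+\cup B_1')$, that is $u\in C^{1,\alpha}_{\loc}(B_1^+\cup B_1')$. The real obstacle, and the only point at which the structure of the obstacle problem is genuinely used rather than soft elliptic theory, is exactly this interplay between the one-sided validity of \eqref{e.DG} (available only for $k\geq 0$) and the vanishing of $\nabla u$ on $\Gamma(u)$; the remaining ingredients are the standard De Giorgi iteration and covering machinery.
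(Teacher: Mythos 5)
Your strategy starts from the same ingredients as the paper (the one-sided De~Giorgi class estimate \eqref{e.DG}, the $C^1$ regularity of Proposition~\ref{p.C1}, and the vanishing of $\nabla u$ on $\Gamma(u)$), but the De~Giorgi iteration step contains a genuine gap that the paper's argument is specifically designed to circumvent. Knowing $V(x_0)=0$, and hence $\inf_{B_r(x_0)}V\leq 0\leq\sup_{B_r(x_0)}V$, is a pointwise statement and does not give the \emph{measure-theoretic} density estimate that De~Giorgi's oscillation decay requires. To make the iteration contract one needs a bound of the type $\big|\{V>k_0\}\cap B_r\big|\leq\gamma\,|B_r|$ at the starting level $k_0$, and when the median level $k_0=\tfrac12(\sup+\inf)$ is negative it cannot be used (only $k\geq 0$ is allowed by \eqref{e.DG}), while at the substitute level $k_0=0$ no density bound for $\{V>0\}$ is available. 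The paper resolves precisely this point through the dichotomy on the density of $\Lambda(u)$ in $B'_{\rho_j}(x_0)$: either $\Lambda(u)$ occupies at least half of $B'_{\rho_j}(x_0)$, in which case $\partial_iu=0$ on a set of half density for $i\leq n$ (whence the Poincar\'e inequality for $(\partial_iu-k)_+$ and the contraction at this scale for the tangential derivatives), or the complement does, in which case $\partial_{n+1}u=0$ on a set of half density (giving the contraction for $\partial_{n+1}u$). Nothing in your proposal supplies this density information, so the claimed geometric decay $\operatorname{osc}_{B_{r/2}(x_0)}V\leq\theta\,\operatorname{osc}_{B_r(x_0)}V$ does not follow.

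There is a second, smaller gap downstream. Even after the dichotomy argument the paper does not obtain oscillation decay for all the derivatives at every free boundary point: at each scale only one of the two families (tangential or normal) contracts, and after a pigeonhole interleaving over the dyadic scales one concludes H\"older decay for $\partial_{n+1}u$ only (the implication ``tangential decay $\Rightarrow$ normal decay'' via \eqref{e.stima n+1 W22-2} and Morrey goes one way). That is why the paper then passes through the H\"older continuity of the co-normal derivative on $B_1'$ and runs a Campanato-type comparison argument against solutions $w$ of Dirichlet/Neumann problems for the minimal surface equation, producing the decay estimates \eqref{e:decayubdry0}--\eqref{e:decayubdry}. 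Your ``routine covering argument'' would be adequate if one had uniform H\"older decay of the full gradient at every point of $\Gamma(u)$, but that is exactly what neither the paper's nor your version of the De~Giorgi step delivers, so the Campanato stage cannot be dispensed with.
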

\begin{proof}
By standard results in elliptic regularity we have that 
$u\in C^{\infty}(B_1^+)$. 
Let $x_0\in B_1'$, $\rho\in(0,1-|x_0|)$
and $\rho_j:=2^{-j}\rho$, $j\geq 0$.
We start off considering the case
\begin{equation}\label{e:Lambdau dens}
\cL^n(\Lambda(u)\cap B_{\rho_j}'(x_0))\geq
\sfrac12\, \cL^n(B_{\rho_j}'(x_0)).
\end{equation}
Then for all $i\in\{1,\ldots, n\}$ we also get
\begin{equation*}
\cL^n(\{\partial_iu=0\}\cap B_{\rho_j}'(x_0))\geq \sfrac12\, 
\cL^n(B_{\rho_j}'(x_0)).
\end{equation*}
Let 
$i\in\{1,\ldots, n\}$ be fixed and
set $k_{j}:=\frac12(\max_{B_{\rho_{j}}'(x_0)} \de_i u+
\min_{B_{\rho_{j}}'(x_0)} \de_i u)$.
Without loss of generality, we can assume that $k_j\geq0$
(if this is not the case, we consider $-\de_iu$).
Then, 
\[
\cL^n\big(\{ \de_i u\leq k_{j}\}\cap B_{\rho_{j}}'(x_0)\big)
\geq \sfrac12\, \cL^n\big(B_{\rho_{j}}'(x_0)\big).
\]
By Proposition~\ref{p.H2}, a contradiction argument yields 
that the Poincar\'e type inequality
\[
\|( \de_i u-k)_+\|_{L^2(B_1^+)}\leq 
C\|\nabla( \de_i u-k)_+\|_{L^2(B_1^+)} \quad \forall\; k\geq k_j\geq 0,
\]
for some constant $C=C(n)>0$. Hence, by taking into account 
\eqref{e.DG} in Proposition~\ref{p.C1alfa},
the usual De Giorgi's argument can be run to conclude that 
\begin{equation}\label{e:osc decay i}
 \text{osc}_{B_{\rho_{j+1}}^+(x_0)}(\partial_iu)\leq
 \kappa\, \text{osc}_{B_{\rho_{j}}^+(x_0)}(\partial_iu)
\end{equation}
for all $i\in\{1,\ldots, n\}$, where $\kappa\in(0,1)$ depends only on $L$ 
(cf. \cite[Lemma~7.2]{Giusti03}).

On the other hand, if \eqref{e:Lambdau dens} does not hold, then by virtue 
of the (ambiguous) boundary conditions in \eqref{e.signorini bc} 
\[ 
\cL^n(\{\partial_{n+1}u=0\}\cap B_{\rho_{j}}'(x_0))\geq
\sfrac12 \cL^n(B_{\rho_{j}}'(x_0)).
\]
Note that $\partial_{n+1}u\vert_{B_1'}\leq 0$,
therefore
$k_{n+1}:=\frac12(\max_{B_{\rho_{j}}'(x_0)}(-\de_{n+1} u)+
\min_{B_{\rho_{j}}'(x_0)}(-\de_{n+1} u))\geq 0$.
Thus arguing as above, in view of \eqref{e.DG}
we conclude that 
\begin{equation}\label{e:osc decay n}
\text{osc}_{B_{\rho_{j+1}}^+(x_0)}(\partial_{n+1}u)
\leq \kappa\, \text{osc}_{B_{\rho_{j}}^+(x_0)}(\partial_{n+1}u)
\end{equation}
where $\kappa\in(0,1)$ depends only on $L$.

By means of estimates \eqref{e:osc decay i} and \eqref{e:osc decay n}, 
we next show that for some constant $C=C(L)>0$ and for all $r\in(0,1-|x_0|)$ 
\begin{equation}\label{e:decay osc}
\text{osc}_{B_{r}^+(x_0)}(v)\leq C\,r^\alpha\,,
\end{equation}
either for $v=\partial_iu$ for all $i\in\{1,\ldots,n\}$, 
or for $v=\partial_{n+1}u$.
With this aim, fix $N\in\N$ and consider the radii $\rho_j$ for 
$0\leq j\leq 2N-1$. Clearly, we can find (at least) $N$ radii 
$\rho_{j_h}$, $h=1,\ldots,N$, such that one between \eqref{e:osc decay i} and 
\eqref{e:osc decay n} holds for all such $h$'s. In particular, we infer 
that for all $1\leq h\leq N$ 
\[
\text{osc}_{B_{\rho_{j_{h+1}}}^+(x_0)}(v)\leq \kappa\, \text{osc}_{B_{\rho_{j_h}}^+(x_0)}(v)\,,
\]
with the function $v$ being equal either to $\partial_{n+1}u$ 
or to $\partial_iu$, in the latter case any $i\in\{1,\ldots, n\}$ works. 
Thus, iteratively, we conclude that 
\[
\text{osc}_{B_{\rho_{2N}}^+(x_0)}(v)\leq
\text{osc}_{B_{\rho_{j_{N+1}}}^+(x_0)}(v)\leq \kappa^{N+1}
\text{osc}_{B_{\rho}^+(x_0)}(v).
\]
Therefore, if $r\in(0,\rho)$ let $N\in\N$ be 
such that $r\in[\rho_{2N+1},\rho_{2N})$ 
we conclude then that
\[
 \text{osc}_{B_{r}^+(x_0)}(v)\leq
\text{osc}_{B_{\rho_{2N}}^+(x_0)}(v)\leq 
\big(\sfrac r\rho\big)^{\sfrac{|\log_2\kappa|}2}
\text{osc}_{B_{\rho}^+(x_0)}(v)=C\,r^\alpha.
\]

Actually, the last inequality always holds true for $\partial_{n+1}u$.
Indeed, considering the level $k=0\vee\min_{B_r^+(x_0)}v$ in 
Proposition~\ref{p.C1alfa}, with $v=\pm\partial_iu$ and $i\in\{1,\ldots, n\}$, 
from \eqref{e:decay osc} we infer that 
\[
\int_{B_r^+(x_0)}|\nabla v|^2\,\d x\leq C\,r^{n-1+2\alpha}.
\]
Hence, if \eqref{e:decay osc} holds for $v=\partial_iu$ for all $i\in\{1,\ldots, n\}$, 
then by using the estimate deriving from \eqref{e.stima n+1 W22-2} as 
$\eps\downarrow 0+$ and the latter inequality we conclude that
\[
\int_{B_r^+(x_0)}|\nabla(\partial_{n+1}u)|^2\,\d x\leq C\,r^{n-1+2\alpha}.
\]
In turn, Morrey's theorem implies that
\[
\text{osc}_{B_{r}^+(x_0)}(\partial_{n+1}u)\leq C\,r^\alpha.
\]
Hence, in any case we have shown that $\partial_{n+1}u
\in C^{0,\alpha}_{loc}(B_1')$.
In particular, we can infer that the co-normal derivative of $u$
is H\"older continuous in $B_1'$ in view of the boundary 
conditions in \eqref{e.signorini bc}:
\begin{equation*}
\frac{\de_{n+1}u}{\jac{u}} = 
\frac{\de_{n+1}u}{\sqrt{1+|\de_{n+1}u|^2}}
 \in C^{0,\alpha}_{loc}(B_1').
\end{equation*}
Note that the co-normal derivative is zero on $B_1'\setminus\Lambda(u)$.

We next use interior regularity and boundary regularity for the Dirichlet 
problem for the minimal surface equation together with an ad-hoc argument 
to infer that $u\in C^{1,\beta}_{\loc}(B_1^+\cup B_1')$
for some $\beta=\beta(n,L)\in(0,\alpha)$, recalling that $L=\Lip(u)$. 
For the sake of simplicity we show that $u\in C^{1,\beta}\big(B_{\sfrac34}^+ \cup B_{\sfrac34}'\big)$.
Let  $x_0\in B_{\sfrac 34}'$ and $r\in(0,\sfrac14)$.
If  $B_r'(x_0)\subseteq \Lambda(u)$,  we conclude by the regularity theory for the Dirichlet problem for uniformly elliptic equations (cf. \cite{GiaGiu75}) that for some $\beta>0$
\begin{equation}\label{e:decayubdry0}
\Phi(x_0,\rho):=
\int_{B_\rho^+(x_0)}|\nabla u-(\nabla u)_{B_\rho^+(x_0)}|^2dx
\leq C\Big(\frac \rho r\Big)^{n+2\beta}\Phi(x_0,r),
\end{equation}
provided that $\rho<r$, where $(v)_{E}:=\fint_Ev(x)\,\d x$ denotes the average of a function $v$ in the set $E$.
Instead, if there exists $z\in \Gamma(u) \cup (B_{r}'(x_0)\setminus \Lambda(u))$, then we show that
%
\begin{equation}\label{e:decayubdry}
\Phi(x_0,\rho)
\leq C\Big(\frac \rho r\Big)^{n+2\beta}\Phi(x_0,r)
+C[g]_{C^{0,\alpha}(B_{\sfrac 34})}^2r^{n+2\alpha},
\end{equation}
provided that $4\rho<r$.
Note that \eqref{e:decayubdry} and 
\cite[Lemma 7.3]{Giusti03} yield for all $\rho<r\leq\sfrac14$
\begin{equation}\label{e:decayphibdry}
\Phi(x_0,\rho)\leq C\Big(\frac 1{r^{n+2\beta}}
\Phi(x_0,r)+[g]_{C^{0,\alpha}(B_{\sfrac 34})}^2\Big)\rho^{n+2\beta},
\end{equation}
with $C=C(n,L,\alpha,\beta)>0$.

With the aim of proving \eqref{e:decayubdry}, let $w$ be the solution of 
\[
\begin{cases}
\div\Big(\frac{\nabla w}{\jac{w}}\Big)=0 & B_r^+(x_0),\\
\de_{n+1}w
= 0 & B_r'(x_0), \\
w=u & (\partial B_r(x_0))^+.
\end{cases}
\]
The existence of $w$ is guaranteed by an even reflection across 
$B_r'(x_0)$ of the boundary datum and by applying classical results on the 
existence of minimal surfaces with given Dirichlet boundary conditions 
(cf. \cite[Chapter 1]{Giusti03}).
By simple triangular inequalities, we have
\begin{align}\label{e:campanato} 
\Phi(x_0,r)
\leq 6\int_{B_r^+(x_0)}|\nabla u-\nabla w|^2dx
+4\int_{B_r^+(x_0)}|\nabla w-(\nabla w)_{B_r^+(x_0)}|^2dx.
\end{align}
We estimate the right hand side in \eqref{e:campanato} starting 
with the first addend. With this aim test \eqref{e.signorini bc} with 
$u-w\in H^1_0(B_r(x_0))$ to deduce that 
\[
\int_{B_r^+(x_0)}
\Big(\frac{\nabla u}{\jac{u}}-\frac{\nabla w}{\jac{w}}\Big)
\cdot\nabla(u-w)\d x+\int_{B_r'(x_0)}g(x')(u(x')-w(x'))\d x'=0
\]
where we have set $g:=\frac{\de_{n+1}u}{\sqrt{1+|\de_{n+1}u|^2}}$. 
Recall that $g\in C^{0,\alpha}_{\loc}(B_1')$ and $g(z)=0$.
In particular,
by the Divergence theorem we get 
\begin{align}\label{e:g holder}
(1+L^2&)^{-\sfrac32}\int_{B_r^+(x_0)}|\nabla(u-w)|^2\d x\leq
\int_{B_r'(x_0)}|g(x')-g(z)||u(x')-w(x')|\d x'\notag\\
&\leq [g]_{C^{0,\alpha}(B_{\sfrac 34}')}
(2r)^\alpha\int_{B_r'(x_0)}|u(x')-w(x')|\d x'\notag\\
&\leq [g]_{C^{0,\alpha}(B_{\sfrac 34})}(2r)^\alpha
\int_{B_r^+(x_0)}\div(|u-w|e_{n+1})\d x\notag\\
&\leq [g]_{C^{0,\alpha}(B_{\sfrac 34})}(2r)^\alpha
\int_{B_r^+(x_0)}|\nabla(u-w)|\d x\notag\\
&\leq 2^\alpha\omega_n^{\sfrac12}[g]_{C^{0,\alpha}(B_{\sfrac 34})}r^{\alpha+\sfrac n2}\|\nabla(u-w)\|_{L^2(B_r^+(x_0))}\,.
\end{align}
Hence, for some constant $C=C(n,L)>0$ we deduce that
\begin{equation}\label{e:nablaumenow}
\int_{B_r^+(x_0)}|\nabla(u-w)|^2\d x\leq 
C[g]_{C^{0,\alpha}(B_{\sfrac 34})}^2r^{n+2\alpha}.
\end{equation}
For the second term, we note that $w\in W^{2,2}(B_\rho^+(x_0))$
for every $\rho<r$ by arguing as in  Proposition~\ref{p.H2}. 
Moreover, if $i\in\{1,\ldots, n\}$ the function $\de_iw$
is a solution of 
\begin{equation}\label{e:pdedeiw}
\begin{cases}
\div(\mathbb{B}(x)\nabla(\de_iw))=0 & B_r^+(x_0),\\
\de_iw=0 & B_r'(x_0),
\end{cases}
\end{equation}
where the measurable matrix field $\mathbb{B}$ is given by
\[
\mathbb{B}(x):=\frac{\mathrm{Id}}{\jac{w}}
-\frac{\nabla w\otimes\nabla w}{(1+|\nabla w|^2)^{\sfrac32}}.
\]
By the a priori gradient estimate for the minimal surface equation
by Bombieri, De Giorgi and Miranda \cite{BoDGMi69}, there exists a constant $C>0$
such that
\[
\|Dw\|_{L^\infty(B_{\sfrac{r}2}(x_0))} \leq C e^{Cr^{-1}\|w\|_{L^\infty(B_{r}(x_0))}}
= C e^{Cr^{-1}\|u\|_{L^\infty(\partial B_{r}(x_0))}} \leq 
C e^{CL}.
\]
In particular, the matrix field $\mathbb{B}$
is coercive and bounded in $B_{\sfrac{r}2}$, with bounds depending
only on the Lipschitz constant $L$
of the solution $u$ to the thin obstacle problem.
Thus, by De~Giorgi's theorem \cite{DeG57} we have that 
$\de_iw\in C^{0,\beta}_{\loc}(B_r^+)$ with $\beta=\beta(n,L)$ and
\begin{equation}\label{e:dgw}
\int_{B_\rho^+(x_0)} |\de_i w-(\de_i w)_{B_\rho^+(x_0)}|^2\,\d x\leq C\Big(\frac{\rho}{r}\Big)^{n+2\beta}
 \int_{B_r^+(x_0)} |\de_i w-(\de_i w)_{B_r^+(x_0)}|^2\,\d x\,
 \quad \text{$\forall\;2\rho<r$.}
\end{equation}
In addition, being $\de_iw$ a solution of \eqref{e:pdedeiw}, it satisfies a Caccioppoli's inequality
\begin{equation}\label{e:caccioppoliw}
 \int_{B_{\rho}^+(x_0)} |\nabla(\de_i w)|^2\,\d x\leq
 \frac{C}{\rho^2}
 \int_{B_{2\rho}^+(x_0)} |\de_i w-(\de_i w)_{B_{2\rho}^+(x_0)}|^2\,\d x\,,
\end{equation}
with $4\rho<r$. 
Using the equation we can bound $\de_{n+1}^2w$ with the other derivatives (cf. \eqref{e.derivata normale} and \eqref{e.stima n+1 W22-2}) as follows
 \begin{equation}\label{e:nabladuew}
 \int_{B_\rho^+(x_0)}|\nabla(\de_{n+1}w)|^2\,\d x 
 \leq C \sum_{i=1}^n
 \int_{B_\rho^+(x_0)}|\nabla (\de_i w)|^2\,\d x\,,
 \end{equation}
with $C=C(n,L)>0$. 
Then, Poincare's inequality together with \eqref{e:dgw}, \eqref{e:caccioppoliw} and \eqref{e:nabladuew} give
\[
 \int_{B_\rho^+(x_0)} |\de_{n+1} w-(\de_{n+1} w)_{B_r^+}|^2\,\d x\leq  C\Big(\frac{\rho}{r}\Big)^{n+2\beta}\sum_{i=1}^n
 \int_{B_r^+(x_0)}|\de_i w-(\de_i w)_{B_r^+(x_0)}|^2\,\d x\,.
\]
Estimate \eqref{e:decayubdry} then follows at once from \eqref{e:campanato}, \eqref{e:nablaumenow}, \eqref{e:dgw}
and the latter inequality.

In addition, if $x_0\in B_1^+$ and $r\leq\dist(x_0,B_1')$, then
$B_r(x_0)\subset B_1^+$. 
Hence, by the standard regularity theory for uniformly elliptic equations  we have for all $\rho<r$
\begin{equation*}
\Phi(x_0,\rho)\leq C\Big(\frac{\rho}{r}\Big)^{n+2\beta}
\Phi(x_0,r),
\end{equation*}
$C=C(n,L)>0$ (cf. \cite{Giusti03}). 

From what we have proven, we deduce that  
there exists
$C=C(n,L,\alpha,\beta)>0$ such that 
for all $x_0\in B^+_{\sfrac 34}$ and $\rho<\sfrac 14$ 
\begin{equation*}
\Phi(x_0,\rho)\leq C\rho^{n+2\beta}\,,
\end{equation*}
from which the conclusion $u\in C^{1,\beta}\big(B^+_{\sfrac 34} \cup B_{\sfrac 34}'\big)$
readily follows by Campanato's theorem \cite{Giusti03}.
\end{proof}

%
%
\section{Optimal $C^{1,\sfrac12}$-regularity}\label{s:optimal}
In this section we deduce the optimal $C^{1,\sfrac12}$-regularity
of the solutions $u$ to the thin obstacle problem
from results by Simon and Wickramasekera \cite{SiWi16} on stationary
graphs of two-valued functions.
We give few preliminaries on the topic.
We consider pairs of real valued Lipschitz functions $U=\{u_1, u_2\}$
with the components $u_i$ defined on an open subset 
$\Omega\subset\R^N$. The union of the graphs of $u_1, u_2$,
namely
\[
\gr_U := \big\{(x,u_i(x)): x\in \Omega,\;i=1,2\big\}
\]
naturally inherits the structure of rectifiable varifold, 
which by a slight abuse of notation we keep denoting $\gr_U$.
Note that, $\gr_U= \gr_{u_1}+\gr_{u_2}$ as varifolds, where
$\gr_{u_i}$ denotes the varifolds associated to the graphs of the
real valued functions $u_i$.
Following \cite{SiWi16} we say that
$u$ is a two-valued minimal graph if $\gr_U$ is stationary
for the area functional, {\ie}
\[
\int_{\gr_U} \div_{\gr_U} Y \,\d\cH^{n+1} = 0
\quad \forall\; Y\in C^\infty_c(\Omega\times \R),
\]
where $\div_{\gr_U} Y$ denotes the tangential divergence of $Y$
in the direction of the tangent to $\gr_U$.
Clearly, if $u_1$ and $u_2$ are both solutions to the minimal
surface equation, then $u$ is a two-valued minimal graph,
but the vice-versa does not hold.
For more on multiple valued graphs we refer to 
\cite{DS-memo, DS-annali}.
In particular, we recall the definition of the metric for two-points:
$U=\{u_1, u_2\}$ and $V=\{v_1, v_2\}$,
\begin{align*}
\mathcal{G}(U, V) := \min \Big\{
\sqrt{|u_1 - v_1|^2 + |u_2 - v_2|^2},
\sqrt{|u_1 - v_2|^2 + |u_2 - v_1|^2}\Big\}.
\end{align*}
For two-valued functions the usual notion
of continuity and H\"older continuity can be accordingly introduced.
Moreover, a two-valued function $U$ is $C^1$
if there exists a continuous two-valued function
$DU = \{Du_1, Du_2\}$ with $Du_i\in \R^n$ such that,
setting $V_x(y) = \{u_1(x) + Du_1(x)(y-x),u_2(x) + Du_2(x)(y-x)\}$,
we have
\[
\lim_{y\to x}\frac{\mathcal{G}(U(y),V_x(y))}{|x-y|} =0.
\]
Finally, we say that $U$ is $C^{1,\alpha}$ if
$DU$ is H\"older continuous with exponent $\alpha$.

The link between the thin obstacle problem for the area functional
and the two-valued minimal graphs is given in the next proposition.

\begin{proposition}\label{p:ob vs 2-val}
Let $u$ be a solution to the thin
obstacle problem \eqref{e:ob prob}.
Then, the multiple-valued map $U= \{u, -u\}$
is a minimal two-valued graph.
\end{proposition}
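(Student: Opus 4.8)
The plan is to show that the varifold $\gr_U$ associated to $U=\{u,-u\}$ is stationary for the area functional by splitting test vector fields into their even and odd parts in $x_{n+1}$ and using the two pieces of the Euler--Lagrange system \eqref{e.signorini bc}. First I would record the structure of $\gr_U$: away from the free boundary, i.e. over $B_1'\setminus\Gamma(u)$, the set $\Lambda(u)$ carries both sheets coinciding on $\{x_{n+1}=0\}$ while on $B_1'\setminus\Lambda(u)$ we have $\partial_{n+1}u=0$, so each sheet is a smooth minimal graph meeting the hyperplane orthogonally; hence $\gr_U$ is a smooth (multiplicity-one, or multiplicity-two over $\Lambda(u)$) minimal surface there and the first variation vanishes automatically by the classical first variation formula and the Neumann/Dirichlet conditions. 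The only place stationarity could fail is along the lower-dimensional set $\Gamma(u)\subset B_1'$.

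The main step is therefore to check that no singular contribution to the first variation is produced along $\Gamma(u)$. For this I would use the regularity already established: by Theorem~\ref{t.main}(i) (or even just the $C^{1,\beta}$ regularity from the previous section) $u\in C^1(B_1^+\cup B_1')$ with $\partial_{n+1}u(x',0)=0$ on all of $B_1'$ in the one-sided sense, so in fact the two sheets of $U$ glue in a $C^1$ fashion across $\{x_{n+1}=0\}$ and $\gr_U$ is a $C^1$ embedded hypersurface except over $\Gamma(u)$, where it is still $C^1$ as a two-valued graph. Given a test field $Y\in C^\infty_c(\Omega\times\R)$, decompose $Y=Y^{\mathrm{ev}}+Y^{\mathrm{odd}}$ according to the symmetry $x_{n+1}\mapsto -x_{n+1}$; by the even symmetry of $u$ (hence of $\gr_U$) the odd part integrates to zero, so it suffices to treat even $Y$. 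For even $Y$, writing the tangential divergence integral over $\gr_U$ as twice the integral over the upper sheet $\gr_u\cap\{x_{n+1}>0\}$ and integrating by parts (the boundary term on $B_1'$ involves the co-normal, which is $\partial_{n+1}u/\jac{u}=0$ there by \eqref{e.signorini bc}), one reduces exactly to the weak minimal surface equation $\mc{u}=0$ in $B_1^+$ tested against the tangential component of $Y$ restricted to the graph. Since $u\in W^{2,2}_{\loc}(B_1^+\cup B_1')$ by Proposition~\ref{p.H2} and $\Gamma(u)$ has zero $\cH^{n+1}$-measure, there is no concentrated term and the integral vanishes.

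The hard part will be handling the behaviour precisely along $\Gamma(u)$: one must verify that the integration by parts producing the co-normal boundary term is legitimate up to $B_1'$ including its intersection with $\Gamma(u)$, i.e. that the set $\Gamma(u)$ is $\cH^{n+1}$-negligible as a subset of $\gr_U$ and carries no distributional first variation — this is where the one-sided $C^1$ regularity of $\partial_{n+1}u$ from Proposition~\ref{p.C1}, the $W^{2,2}$ estimate of Proposition~\ref{p.H2}, and the area formula for Lipschitz graphs are all needed. A clean way to organize this is: (a) prove stationarity on the open set $\Omega\times\R$ minus the graph over $\Gamma(u)$ by the above decomposition argument; (b) observe that $\gr_U$ has locally finite $\cH^{n+1}$-measure and that the graph over $\Gamma(u)$ has $\cH^{n+1}$-measure zero; (c) invoke a removability statement for stationary varifolds (a set of vanishing $\cH^{n+1}$-measure cannot support first variation of a varifold with locally bounded mass and vanishing first variation off that set), concluding that $\gr_U$ is stationary in all of $\Omega\times\R$. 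The remaining verifications are routine once the symmetry reduction and the co-normal computation are in place.
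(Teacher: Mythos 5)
Your argument has a genuine gap at its central step. After the symmetry reduction and integration by parts on the upper half, you claim the resulting boundary integral over $B_1'$ vanishes because ``the co-normal is $\partial_{n+1}u/\jac{u}=0$ by \eqref{e.signorini bc}.'' That is false on the coincidence set: the Signorini conditions give $u\,\partial_{n+1}u=0$ and $\partial_{n+1}u\leq 0$, so $\partial_{n+1}u=0$ holds on $B_1'\setminus\Lambda(u)$, while on the interior of $\Lambda(u)$ the normal derivative is in general strictly negative. This is precisely where the two-valued structure is needed: the graph of $u$ alone folds along $\Lambda(u)\times\{0\}$ and by itself produces a nonzero distributional first variation there, which is cancelled only by the matching fold of the graph of $-u$. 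The paper's proof makes this explicit by splitting $\gr_U$ into the four $C^1$ graphs of $\pm u$ over $\{x_{n+1}\geq 0\}$ and $\{x_{n+1}\leq 0\}$, applying Stokes on each, computing the boundary co-normals $\eta_i$, and then pairing: on $B_1'\setminus\Lambda(u)$ the upper and lower halves of each single-valued graph cancel ($\eta_1=-\eta_2$, $\eta_3=-\eta_4$), whereas on $\Lambda(u)$ the $+u$ and $-u$ sheets pair across the diagonal ($\eta_1+\eta_4=0$, $\eta_2+\eta_3=0$), using only the sign structure of $\partial_{n+1}u$, not its vanishing. Your even/odd decomposition in $x_{n+1}$ could be made to work, but through a mechanism you did not identify: a $Y$ that is even under the base reflection satisfies $Y_{n+1}=0$ on $\{x_{n+1}=0\}$, and then the $(n+2)$-components of the co-normals of the $u$- and $-u$-sheets over the upper half (proportional to $\mp\partial_{n+1}u$) cancel each other. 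You must keep both sheets over $\{x_{n+1}>0\}$, not just ``$\gr_u$,'' and nothing makes $\partial_{n+1}u$ itself vanish.

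Separately, the removability step at the end of your outline is both unnecessary and insufficient as stated. Once the boundary cancellation is carried out correctly, the first variation vanishes directly with no exceptional set left over. And removability of a closed set from the support of an $(n+1)$-dimensional stationary varifold with locally bounded mass requires control of its $\cH^{n}$-measure (or a suitable capacity bound), not merely $\cH^{n+1}$-nullity; moreover, the bound $\cH^{n}(\Gamma(u))=0$ is established only later in the paper via the optimal regularity that this very proposition is used to prove, so invoking it here would be circular.
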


\begin{proof}
According to the definition
of minimal two-valued graphs, we need to show that
\begin{equation}\label{e.stazionario}
\int_{\gr_U} \div_{\gr_U} Y \,\d\cH^{n+1} = 0
\quad \forall\; Y\in C^1_c(B_1\times \R).
\end{equation}
To this aim, we set
\[
\gr_1 = \gr_{u\vert_{\{x_{n+1}\geq 0 \}}},\quad
\gr_2 = \gr_{u\vert_{\{x_{n+1}\leq 0 \}}},\quad
\gr_3 = \gr_{-u\vert_{\{x_{n+1}\geq 0 \}}},\quad\text{and}\quad
\gr_4 = \gr_{-u\vert_{\{x_{n+1}\leq 0 \}}}.
\]
Clearly, we have that
\begin{equation}\label{e.somma}
\int_{\gr_U} \div_{\gr_U} Y \,\d\cH^{n+1} =
\sum_{i=1}^4\int_{\gr_i} \div_{\gr_i} Y \,\d\cH^{n+1}
\quad \forall\; Y\in C^\infty_c(B_1\times \R).
\end{equation}
Note that $u\vert_{\{x_{n+1}\geq 0 \}}$ and 
$u\vert_{\{x_{n+1}\leq 0 \}}$ are $C^1$ functions (cf. Proposition~\ref{p.C1}), 
therefore, $\gr_1$, $\gr_2$, $\gr_3$, $\gr_4$ are $C^1$-smooth submanifolds with 
boundary.
Let $\eta_i\in\R^{n+2}$ be the external
co-normal to $\de\gr_i$
({\ie} $|\eta_i|=1$, $\eta_i$ is normal to $\de \gr_i$
and tangent to $\gr_i$, pointing outward with
respect to $\gr_i$).
For instance, regarding $\eta_1$ we have that for every point
$(x,u(x))\in \de \gr_1 \cap \{x_{n+1}=0\}$
\begin{gather*}
\eta_1(x,u(x))\cdot e_{n+1}<0 ,
\quad
\eta_1(x,u(x)) \cdot \big(-\nabla u(x), 1\big)= 0\\
\text{and}\quad
\eta_1(x,u(x)) \cdot \big(e_i+\de_i u(x)\, e_{n+2}\big) = 0
\quad\forall\;i=1,\ldots, n.
\end{gather*}
Therefore, by taking into account that $\de_iu\cdot\de_{n+1}u=0$
on $B_1'$ for $i\in\{1,\ldots,n\}$, in view of \eqref{e.signorini bc} 
and Proposition~\ref{p.C1}, simple algebra yields that for every 
$x=(x',0)$ we have
\[
\eta_1(x,u(x)) = 
\left(0, -\frac{1}{\sqrt{1+|\de_{n+1} u(x)|^2}},
-\frac{\de_{n+1} u(x)}{\sqrt{1+|\de_{n+1} u(x)|^2}}\right).
\]
Similarly, we have 
\[
\eta_2(x,u(x)) = 
\left(0, 
\frac{1}{\sqrt{1+|\de_{n+1} u(x)|^2}},
\frac{-\de_{n+1} u(x)}{\sqrt{1+|\de_{n+1} u(x)|^2}}\right),
\]
\[
\eta_3(x,-u(x)) = 
\left(0,
-\frac{1}{\sqrt{1+|\de_{n+1}u(x)|^2}},
\frac{\de_{n+1} u(x)}{\sqrt{1+|\de_{n+1} u(x)|^2}}\right),
\]
and
\[
\eta_4(x,-u(x)) = 
\left(0, 
\frac{1}{\sqrt{1+|\de_{n+1} u(x)|^2}},
\frac{\de_{n+1} u(x)}{\sqrt{1+|\de_{n+1} u(x)|^2}}\right),
\]
where $\de_{n+1} u(x',0) = \lim_{t\downarrow 0}\frac{u(x',t)}{t}$
for every $(x',0)\in \Lambda(u)$.
Hence, using Stokes' theorem we infer that
\begin{align*}
\sum_{i=1}^4\int_{\gr_{i}} 
\div_{\gr_{i}} Y \,\d \cH^{n+1}&
= \sum_{i=1}^4\int_{\de \gr_i} Y \cdot \eta_i \, \d\cH^{n}\\
& = \sum_{i=1}^4\int_{\de \gr_i\setminus (\Lambda(u) \times \R)} Y
\cdot \eta_i \, \d\cH^{n}
+
\sum_{i=1}^4\int_{\de \gr_i \cap (\Lambda(u) \times \R)} Y \cdot 
\eta_i \, \d\cH^{n}.
\end{align*}
We couple the different terms as follows:
\begin{align*}
&\int_{\de \gr_1\setminus (\Lambda(u) \times \R)} Y \cdot 
\eta_1 \, \d\cH^{n}
+ \int_{\de \gr_2\setminus (\Lambda(u) \times \R)} Y \cdot 
\eta_2 \, \d\cH^{n}\\
&= 
\int_{B_1'\setminus \Lambda(u)} \Big[\Big(Y(x',0, u(x',0))\cdot 
\eta_1\big(x',0,u(x',0)\big) + Y(x',0, u\big(x',0)\big)\cdot 
\eta_2\big(x',0,u(x',0)\big) \Big) \cdot\\
&\qquad \qquad\qquad\qquad\cdot\sqrt{1+|\nabla' u 
(x',0)|^2}\,\Big]\,\d x'= 0,
\end{align*}
because $\eta_1(x',0,u(x',0))=-\eta_2(x',0,u(x',0)) = -e_{n+1}$
for every  $(x',0)\in B_1'\setminus \Lambda(u)$.
For the same reasons
\begin{align*}
\int_{\de \gr_3\setminus (\Lambda(u) 
\times \R)} Y \cdot \eta_3 \, \d\cH^{n}
+ \int_{\de \gr_4\setminus (\Lambda(u) \times \R)} Y \cdot 
\eta_4 \, \d\cH^{n}= 0.
\end{align*}
Next we pair
\begin{align*}
&\int_{\de \gr_1\cap (\Lambda(u) \times \R)} Y \cdot 
\eta_1 \, \d\cH^{n}
+ \int_{\de \gr_4\cap (\Lambda(u) \times \R)} Y \cdot 
\eta_4 \, \d\cH^{n}\\
&= 
\int_{\Lambda(u)} \Big(Y\big(x',0, 0\big)\cdot 
\eta_1\big(x',0,0\big) + \\
&\qquad \qquad\qquad\qquad Y\big(x',0, 0\big)\cdot 
\eta_4\big(x',0,0\big) \Big) 
\,\sqrt{1+|\nabla' u 
(x',0)|^2}\,\d x'=0,
\end{align*}
where we used that
$\eta_1\big(x',0,0\big) + 
\eta_4\big(x',0,0\big)=0$ for all $(x',0)\in B_1'$.
With a similar argument, we also have 
\begin{align*}
\int_{\de \gr_2\cap (\Lambda(u) \times \R)} Y 
\cdot 
\eta_2 \, \d\cH^{n}
+ \int_{\de \gr_3\cap (\Lambda(u) \times \R)} Y \cdot 
\eta_3 \, \d\cH^{n} = 0.
\end{align*}
Collecting the estimates above we conclude the proposition.
\end{proof}

Finally, Proposition~\ref{p.C1alfa}, Proposition~\ref{p:ob vs 2-val} 
and \cite[Theorem 7.1]{SiWi16} imply the optimal regularity 
for the solution to the thin obstacle problem.

\begin{theorem}\label{t.opt reg}
Let $g\in C^2(\R^{n+1})$ be even symmetric with respect to $x_{n+1}$
with $g\vert_{\{x_{n+1}=0\}}\geq 0$, and let 
$u\in W^{1, \infty}(B_1)$ be the solution to the thin obstacle problem
\eqref{e:ob prob}.
Then, $u \in C_{\textup{loc}}^{1, \sfrac12}(B_1^+\cup B_1')$.
\end{theorem}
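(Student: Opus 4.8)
The plan is to combine the $C^{1,\alpha}$ one-sided regularity obtained in Section~\ref{s:C1alpha} with the identification of $U=\{u,-u\}$ as a stationary two-valued graph (Proposition~\ref{p:ob vs 2-val}), and then to upgrade the H\"older exponent of $DU$ to $\sfrac12$ by quoting the regularity theorem of Simon and Wickramasekera \cite[Theorem~7.1]{SiWi16} for $C^{1,\alpha}$ two-valued minimal graphs. So the argument is essentially a matter of assembling ingredients that are already in place.

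First I would record that, by the Corollary following Proposition~\ref{p.C1alfa}, $u\in C^{1,\alpha}_{\loc}(B_1^+\cup B_1')$ for some $\alpha\in(0,1)$; by the even symmetry of $u$ in $x_{n+1}$ the same holds on $B_1^-\cup B_1'$, with one-sided $C^{1,\alpha}$ estimates on $\overline{B_1^+}$ and on $\overline{B_1^-}$, the gradient $\nabla u$ being allowed to jump across $B_1'$ only in its $x_{n+1}$-component by the boundary conditions \eqref{e.signorini bc}. Moreover $\nabla'u\equiv0$ on $\Lambda(u)$, since at each point of $\Lambda(u)$ the $C^1$ trace $u|_{B_1'}$ attains its minimum value $0$, and $\de_{n+1}u(x',0^+)=0$ at each point of $\Gamma(u)$ by \eqref{e.derivata normale zero}.

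The main step is then to verify that the two-valued map $U=\{u,-u\}$ belongs to $C^{1,\alpha}_{\loc}(B_1)$ in the sense recalled above, i.e.\ that its two branches glue across $B_1'$ with the right $\cG$-modulus for $DU$. In $B_1^\pm$ this is immediate since $u$ is analytic there. Near a point $(x_0',0)\in B_1'\setminus\Lambda(u)$ one has $u(x_0')>0$, so the two sheets of $U$ are separated, and by the Neumann condition $\de_{n+1}u=0$ in \eqref{e.signorini bc} the even reflection of $u$ solves the minimal surface equation across $B_1'$ (exactly as in the proof of Proposition~\ref{p.C1}), whence $U$ is smooth there. Near a point $(x_0',0)\in\Lambda(u)$ one has $u(x_0')=0$ and $\nabla'u(x_0')=0$, so both branches of $U$ pass through $(x_0',0,0)$; for $x,y$ on opposite sides of $B_1'$, writing $\bar y=(y',-y_{n+1})$ and using $u(y)=u(\bar y)$ together with the elementary inequality $|x-\bar y|\le|x-y|$ (valid whenever $x_{n+1}y_{n+1}\le0$) and $\nabla'u(x_0')=0$, the pairing in $\cG(DU(x),DU(y))$ that matches the $u$-branch at $x$ with the $(-u)$-branch at $y$ is controlled by $C|x-y|^\alpha$; for $x,y$ on the same side it is simply bounded by $\sqrt2\,|\nabla u(x)-\nabla u(y)|\le C|x-y|^\alpha$. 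Collecting these cases gives $U\in C^{1,\alpha}_{\loc}(B_1)$.

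Finally, by Proposition~\ref{p:ob vs 2-val} the varifold $\gr_U$ is stationary for the area functional, so $U$ is a $C^{1,\alpha}$ two-valued minimal graph; \cite[Theorem~7.1]{SiWi16} then yields $DU\in C^{0,\sfrac12}_{\loc}(B_1)$, and restricting to $\overline{B_1^+}$ this is precisely $u\in C^{1,\sfrac12}_{\loc}(B_1^+\cup B_1')$. I expect the only non-routine point to be the gluing in the previous paragraph---checking that $\{u,-u\}$ assembles into a genuinely $C^{1,\alpha}$ two-valued function across the thin obstacle, for which the vanishing of $\nabla'u$ on $\Lambda(u)$ and the reflection inequality $|x-\bar y|\le|x-y|$ are the crucial points---while the improvement from $C^{1,\alpha}$ to $C^{1,\sfrac12}$ is a direct citation of \cite{SiWi16}.
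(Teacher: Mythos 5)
Your argument is precisely the paper's own: establish one-sided $C^{1,\alpha}$ regularity, show $U=\{u,-u\}$ is a $C^{1,\alpha}$ two-valued function, use Proposition~\ref{p:ob vs 2-val} to see $\gr_U$ is a stationary two-valued minimal graph, and invoke \cite[Theorem 7.1]{SiWi16}. Your case analysis of the $\cG$-modulus of $DU$ across $B_1'$ (exploiting $\nabla'u=0$ on $\Lambda(u)$, $\de_{n+1}u=0$ on $B_1'\setminus\Lambda(u)$, and $|x-\bar y|\le |x-y|$) correctly unpacks what the paper compresses into the single assertion $[DU]_{C^{0,\alpha}(B_1)}\le[Du]_{C^{0,\alpha}(B_1^+)}$.
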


\begin{proof}
By Proposition \ref{p.C1alfa} we have that 
there exists $\alpha\in (0,1)$ such that
the two valued function 
$U=\{-u,u\} \in C_{\textup{loc}}^{1,\alpha}(B_1)$,
because
$[DU]_{C^{0,\alpha}(B_1)} \leq [Du]_{C^{0,\alpha}(B_1^+)}$ 
(here $[\cdot]_{C^{0,\alpha}(E)}$ denotes the H\"older seminorm 
of the relevant function on the set $E$).
By Proposition \ref{p:ob vs 2-val} we have that the 
graph of $U$ induces a two-valued minimal graph; we are in the
position to apply \cite[Theorem 7.1]{SiWi16} and conclude that
$U \in C_{\textup{loc}}^{1,\sfrac12}(B_1)$, or equivalently 
$u \in C_{\textup{loc}}^{1, \sfrac12}(B_1^+\cup B_1')$.
\end{proof}

%
%
\section{The structure of the free boundary}\label{s:freebdry}
In this section we provide a detailed analysis of the free boundary
points for the thin obstacle problem for the area functional.
As mentioned in the introduction we prove more refined 
conclusions than those contained in Theorem~\ref{t.main},
recovering the analogous results shown for the Dirichlet energy
in \cite{AtCaSa08, FoSp17}.

To state the result we need to introduce three classes of functions $\Phi_m$,
$\Psi_m$ and $\Pi_m$ for $m\in \N\setminus\{0\}$, that are 
explicitly defined as follows:
\begin{gather}
\Phi_m(x_1,x_2):=\mathrm{Re}\Big[(x_1+i|x_2|)^{2m}\Big],
\label{e:polyn b}\\
\Psi_m(x_1,x_2):=\mathrm{Re}\Big[(x_1+i|x_2|)^{2m-\sfrac12}\Big],
\label{e:hy-geo1b}
\\
\Pi_m(x_1,x_2):=\mathrm{Im}\Big[(x_1+i|x_2|)^{2m+1}\Big].
\label{e:PI}
\end{gather}
Such families of functions exhaust the homogeneous solutions to
the thin obstacle problem with null obstacle for the Dirichlet energy
having top dimensional subspaces of invariances (cp. \cite[Appendix~A]{FoSp17}).

Moreover, we recall that $I_u(x_0,\cdot)$, $x_0\in B_1'$, denotes the frequency function defined in 
\eqref{e:freq intro} that shall be studied in the next subsection. In particular,
we shall prove that there exists finite its limit value in $0^+$ denoted in
what follows by $I_u(x_0,0^+)$ for all $x_0\in\Gamma(u)$.

The following is the main theorem.

\begin{theorem}\label{t:main precise}
Let $u$ be a solution to the thin obstacle problem 
\eqref{e:ob prob}. Then, 
\begin{itemize}
\item[(i)] $\Gamma(u)$ has locally finite $(n-1)$-dimensional
Minkowski content, \ie~for every $K \subset \subset B_1'$ there 
exists a constant $C(K)>0$ such that
\begin{equation}\label{e:misura1}
\cL^{n+1}\big(\cT_r(\Gamma(u)) \cap K \big) \leq C(K)\,r^2
\quad \forall\; r \in (0,1),
\end{equation}
where $\cT_r(E) := \{ x \in \R^{n+1}: \dist(x,E) <r\}$;

\item[(ii)] $\Gamma(u)$
is $\cH^{n-1}$-rectifiable, \ie~there exist at most countably many 
$C^1$-regular submanifolds $M_i \subset \R^n$ of dimension 
$n-1$ such that 
\begin{equation}\label{e:rect1}
\cH^{n-1}\big(\Gamma(u)) 
\setminus \cup_{i\in\N} M_i\big) = 0\,;
\end{equation}

\item[(iii)] $\Gamma_{\sfrac32}(u):=
\{x_0\in \Gamma(u):\,I_u(x_0,0^+)=\sfrac32\}$ 
is locally a $C^{1,\alpha}$ regular submanifold 
of dimension $n-1$ for some dimensional constant $\alpha>0$.
\end{itemize}
Moreover, there exists a subset $\Sigma(u)\subset \Gamma(u)$ 
with Hausdorff dimension at most $n-2$ such that
\[
I_u(x_0,0^+)\in \{2m, 2m-\sfrac 12, 2m+1\}_{m\in\N\setminus\{0\}}
\quad \forall\; x_0\in 
\Gamma(u)\setminus \Sigma(u).
\]
\end{theorem}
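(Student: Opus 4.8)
The plan is to follow the by-now standard blow-up and stratification scheme for the thin obstacle problem, transplanted to the nonparametric area setting via the frequency function $I_u$ of \eqref{e:freq intro}. First I would establish the quasi-monotonicity of $r\mapsto I_u(x_0,r)$ (announced for the next subsection), which guarantees that the limit $I_u(x_0,0^+)$ exists and is finite for every $x_0\in\Gamma(u)$; combined with the optimal $C^{1,\sfrac12}$ regularity of Theorem~\ref{t.opt reg} one gets the lower bound $I_u(x_0,0^+)\geq\sfrac32$ at every free boundary point. The appropriate rescalings $u_{x_0,r}(x):=u(x_0+rx)\big/\|u(x_0+r\,\cdot\,)\|_{L^2(\partial B_1)}$ are then shown, using the $W^{2,2}$ estimates of Proposition~\ref{p.H2} and the frequency bound, to be compact in $W^{1,2}_{\loc}$, and any blow-up limit is a homogeneous global solution of the \emph{linearised} (Dirichlet-energy) Signorini problem of degree $\lambda=I_u(x_0,0^+)$ — the nonlinear area operator contributes only lower-order terms that vanish in the rescaled limit since $\nabla u(x_0)$ is determined and the curvature of the graph scales away.

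Next I would run the stratification: by Almgren's dimension-reduction/Federer argument applied to the upper-semicontinuous frequency map $x_0\mapsto I_u(x_0,0^+)$, decompose $\Gamma(u)$ according to the dimension of the maximal translation-invariance subspace of the blow-up. The top stratum, where the invariant subspace has dimension $n-1$, forces the blow-up to be (a rotation of) one of the one-dimensional model profiles, which are exactly the functions $\Phi_m$, $\Psi_m$, $\Pi_m$ listed in \eqref{e:polyn b}–\eqref{e:PI}, with frequencies $2m$, $2m-\sfrac12$, $2m+1$; the complement $\Sigma(u)$ of this stratum has Hausdorff dimension at most $n-2$, giving the final displayed assertion. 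For (iii), at points of $\Gamma_{\sfrac32}(u)$ the blow-up is unique up to rotation (the profile $\Psi_1=\mathrm{Re}[(x_1+i|x_2|)^{\sfrac32}]$ is the lowest-frequency one and is an isolated, non-degenerate critical point of the relevant functional on the sphere), and an epiperimetric-type inequality together with the quasi-monotonicity yields a rate of convergence of the rescalings, from which a $C^{1,\alpha}$ parametrisation of $\Gamma_{\sfrac32}(u)$ near each such point follows exactly as in \cite{GaPe09,FoSp16,FoSp18}.

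For parts (i) and (ii) I would follow the Naber–Valtorta/Focardi–Spadaro quantitative-stratification machinery as in \cite{FoSp17,FoSp18}. The key input is a bound on the symmetry defect of the rescalings in terms of the frequency drop $I_u(x_0,r)-I_u(x_0,0^+)$, plus the monotonicity; this lets one cover $\Gamma(u)$ efficiently by balls on which the function is quantitatively close to an $(n-1)$-invariant profile, producing the Minkowski content estimate \eqref{e:misura1} with the scaling exponent $r^2$ (i.e. codimension two in $\R^{n+1}$, equivalently $\cH^{n-1}$ on the $n$-dimensional slice $B_1'$), and simultaneously the $\cH^{n-1}$-rectifiability \eqref{e:rect1} via the rectifiable-Reifenberg theorem.

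The main obstacle I anticipate is transferring the linear-theory tools — the frequency monotonicity, the compactness of blow-ups, and above all the epiperimetric inequality — to the nonlinear area functional: one must carefully track the error terms coming from $\jac{u}=\sqrt{1+|\nabla u|^2}$ and from the fact that the coincidence set deforms the domain, showing that after recentering at $x_0\in\Gamma(u)$ (where $\nabla u(x_0)$ exists by the $C^{1,\sfrac12}$ regularity and necessarily $\nabla' u(x_0)=0$, $\partial_{n+1}u(x_0)=0$) these contributions are genuinely subcritical in the relevant scaling. Once the monotonicity formula of the next section is in hand, this reduces to estimates of the type already carried out for the fractional obstacle problem in \cite{FoSp18}, so the remaining work is technical rather than conceptual.
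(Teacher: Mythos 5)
Your roadmap matches the paper's architecture almost step for step: quasi-monotonicity of the frequency $I_u$, lower bound at free boundary points, compactness and blow-up to homogeneous solutions of the \emph{linear} Signorini problem, classification of top-stratum profiles, and then the Naber--Valtorta/Azzam--Tolsa quantitative stratification machinery for (i)--(ii) together with the epiperimetric route for (iii), all transplanted from the earlier Focardi--Spadaro papers. But there is a genuine gap precisely at the point you flag as the ``main obstacle.'' You propose to ``carefully track the error terms coming from $\jac{u}$'' and to argue that these contributions are subcritical after rescaling. The paper does something cleaner and, crucially, \emph{exact}: it observes (Subsection~\ref{s:quadratic}, Lemma~\ref{l:thetalip}) that $u$ is simultaneously a minimizer of the Signorini problem for the \emph{fixed-coefficient} quadratic energy $\mathcal{Q}(v)=\tfrac12\int_{B_1}\vartheta\,|\nabla v|^2\,\d x$ with $\vartheta=(1+|\nabla u|^2)^{-\sfrac12}$, and that $\vartheta$ is Lipschitz --- this last fact being itself a consequence of the optimal $C^{1,\sfrac12}$ regularity proven in Section~\ref{s:optimal}. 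There are no error terms to track: the reduction places the problem verbatim in the framework of monotonicity formulas with Lipschitz coefficients (the setting of \cite{FoGeSp15}), from which the differentiation identities \eqref{e:Dprime}, \eqref{e:Hprime}, \eqref{e:D} and hence the quasi-monotonicity of Proposition~\ref{p:monotonia+lower} follow, after which the free-boundary analysis of \cite{FoSp17} applies with only cosmetic modifications (via the spatial-oscillation estimate of Proposition~\ref{p:D_x frequency} and the mean-flatness bound). Without this reduction it is not clear that your error-tracking plan would close: the frequency with the weight $(1+|\nabla u|^2)^{-\sfrac12}$ built into both numerator and denominator as in \eqref{e:freq intro} \emph{is} the frequency for the weighted quadratic energy, and its quasi-monotonicity with an exponentially controlled remainder rests specifically on the Lipschitz regularity of that weight, not on a generic smallness of lower-order perturbations of the Laplacian. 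A minor imprecision aside: Lemma~\ref{l:freq lower} only yields a rough universal lower bound for $I_u(x_0,0^+)$ at this stage of the argument; the sharp threshold $\sfrac32$ emerges a posteriori from the classification of blow-up profiles, not directly from the $C^{1,\sfrac12}$ estimate.
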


Theorem \ref{t:main precise} generalizes to the nonlinear
setting of minimal surfaces the known results for the regularity
of the free boundary shown for the fractional obstacle problem.
The conclusion in (iii) extends the analysis of the regular part 
of the free boundary
done in \cite{AtCaSa08} and its proof 
follows from \cite{GaSm14} as a consequence of the 
epiperimetric inequality established in \cite{FoSp16, GaSm14}.
While for the rest, 
the statements are modelled on our results in \cite{FoSp17} and 
the proof
is accomplished by the same arguments exploited for 
the Dirichlet energy in 
\cite{FoSp17}; for the sake of completeness, 
in the following we provide
the readers with the details of the needed changes.


\subsection{Obstacle problems for Lipschitz quadratic energies}\label{s:quadratic}
Given a solution 
$u$ to \eqref{e:ob prob}, it follows from
\eqref{e.signorini bc} 
that $u$ minimizes 
the following thin obstacle problem for a 
specific quadratic energy:
\begin{equation}\label{e:Aenrg}
\mathcal{Q}:\mathcal{A}_g\ni v\longmapsto\frac12\int_{B_1}\vartheta(x)
|\nabla v(x)|^2\, dx, 
\qquad \text{with }\;
\vartheta(x):=\big(1+|\nabla u(x)|^2\big)^{-\sfrac12}.
\end{equation}
Note that the above functional is coercive because
\begin{equation}\label{e:Acoercive}
0<\big(1+L^2\big)^{-\sfrac12}\leq \vartheta(x)\leq  1\,,
\end{equation}
where as usual $L=\Lip(u)$.
Actually, $\vartheta(x)=1$ if $x\in \Gamma(u)$.
Moreover, we have that $\vartheta$ is a Lipschitz function, 
as proven in the following lemma.

\begin{lemma}\label{l:thetalip}
Let $u$ be a solution to the thin obstacle problem 
\eqref{e:ob prob}, then $\vartheta \in W^{1,\infty}(B_1)$.
\end{lemma}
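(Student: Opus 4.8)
The plan is to show that $w:=|\nabla u|^2$ is locally Lipschitz in $B_1$; since $t\mapsto(1+t)^{-\sfrac12}$ is smooth on $[0,L^2]$ and $\vartheta=(1+w)^{-\sfrac12}\in L^\infty(B_1)$ by \eqref{e:Acoercive}, this will give at once $\vartheta\in W^{1,\infty}(B_1)$ (with $\|\nabla\vartheta\|_\infty\le\tfrac12\|\nabla w\|_\infty$). First I would record three facts. (a) $w$ is \emph{continuous on $B_1$}: the tangential gradient $\nabla'u$ is continuous on $B_1^+\cup B_1'$ by Frehse's Theorem~\ref{t.frehse} and, being even in $x_{n+1}$, is continuous across $B_1'$; the normal derivative $\de_{n+1}u$ is continuous on $B_1^+\cup B_1'$ by Proposition~\ref{p.C1} and, being odd in $x_{n+1}$, has square $(\de_{n+1}u)^2$ continuous across $B_1'$; hence $w=|\nabla'u|^2+(\de_{n+1}u)^2$ is continuous on $B_1$. (b) $\nabla u$ \emph{vanishes on $\Gamma(u)$}: $u|_{B_1'}\in C^1(B_1')$ attains its minimum at every point of $\Lambda(u)$, so $\nabla'u\equiv 0$ on $\Lambda(u)\supseteq\Gamma(u)$; while $\de_{n+1}u\equiv 0$ on $B_1'\setminus\Lambda(u)$ by the complementarity condition in \eqref{e.signorini bc}, hence $\de_{n+1}u\equiv 0$ on $\overline{B_1'\setminus\Lambda(u)}\supseteq\Gamma(u)$ by continuity. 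Combining (b) with the \emph{optimal} $C^{1,\sfrac12}$ regularity of Theorem~\ref{t.opt reg} gives the decay $|\nabla u(x)|\le C\,\dist(x,\Gamma(u))^{\sfrac12}$ for $x$ near $\Gamma(u)$ in $\overline{B_1^{\pm}}$. (c) Away from $\Gamma(u)$, $w$ is smooth, hence locally Lipschitz: inside $B_1^{\pm}$ by interior elliptic regularity for the minimal surface equation, while near a point of $B_1'\setminus\Lambda(u)$ the (even) function $u$ already solves it across $B_1'$ in the weak sense (zero Neumann data), and near a point of $\mathrm{int}\,\Lambda(u)$ the odd reflection $v:=\sgn(x_{n+1})u$ solves it across $B_1'$ (the minimal surface operator being invariant under that reflection), with $|\nabla v|=|\nabla u|$.

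It then remains to bound $|\nabla w|$ in a neighbourhood of an arbitrary $x_0\in\Gamma(u)$, and by the symmetry of $u$ it suffices to treat $x\in B_1^+$ close to $x_0$. Set $d:=\dist(x,\Gamma(u))$, so $0<x_{n+1}\le d$. I would run a routine dyadic alternative — using the interior ball $B_{x_{n+1}/2}(x)\subset B_1^+$ when $x_{n+1}$ is comparable to $d$, and the ball centred at the projection $(x',0)$ of $x$ onto $B_1'$, of radius $\asymp d$, when $x_{n+1}\ll d$ — to produce a ball $B$ of radius $\rho\asymp d$ with $B\cap\Gamma(u)=\emptyset$ and $x$ in its concentric half-ball. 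Since $B\cap B_1'$ is connected and disjoint from $\Gamma(u)=\de\Lambda(u)$, it lies either in $B_1'\setminus\Lambda(u)$ or in $\mathrm{int}\,\Lambda(u)$; accordingly, as in (c), the minimal surface equation is solved on all of $B$ either by $u$ or by its odd reflection, which I denote $\tilde u$, with $|\nabla\tilde u|=|\nabla u|$ on $B$. Rescaling $B$ to the unit ball and using $\|\nabla\tilde u\|_{L^\infty(B)}\le L$, interior estimates for the minimal surface equation (a De~Giorgi and Schauder bootstrap for the derivatives $\de_k\tilde u$, which solve a uniformly elliptic divergence-form equation with constants depending only on $n,L$) give $|\nabla^2\tilde u(x)|\le\frac{C}{\rho}\sup_B|\nabla\tilde u|$; hence, with $C=C(n,L)>0$,
\[
|\nabla^2\tilde u(x)|\ \le\ \frac{C}{\rho}\sup_B|\nabla\tilde u|\ \le\ \frac{C}{\rho}\cdot C\rho^{\sfrac12}\ =\ C\,\rho^{-\sfrac12},
\]
where the middle inequality uses the decay in (b) together with $\sup_B\dist(\cdot,\Gamma(u))\le C d\asymp C\rho$. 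Since also $|\nabla\tilde u(x)|=|\nabla u(x)|\le C\rho^{\sfrac12}$, it follows that
\[
|\nabla w(x)|\ =\ 2\,\big|\nabla^2\tilde u(x)\,\nabla\tilde u(x)\big|\ \le\ 2\,C\rho^{-\sfrac12}\cdot C\rho^{\sfrac12}\ =\ C(n,L).
\]
Thus $|\nabla w|\le C$ on $B_{r_0}^{\pm}(x_0)$ for some $r_0>0$; since by (a) $w$ is continuous on $B_{r_0}(x_0)$ and Lipschitz on each closed half-ball with matching boundary values on $B_{r_0}'(x_0)$, it is Lipschitz on $B_{r_0}(x_0)$. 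Together with (c) this yields $w\in W^{1,\infty}_{\loc}(B_1)$, and hence $\vartheta\in W^{1,\infty}(B_1)$.

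The hard part is the \emph{borderline} cancellation at the very end: the scaled interior estimate produces $|\nabla^2 u|\lesssim\dist(\cdot,\Gamma(u))^{-\sfrac12}$ precisely because $\nabla u$ decays like $\dist(\cdot,\Gamma(u))^{\sfrac12}$, and that same decay of $\nabla u$ then exactly compensates this blow-up in $\nabla w=2\,\nabla^2u\,\nabla u$. This is why the lemma genuinely uses the \emph{optimal} exponent of Theorem~\ref{t.opt reg}: with only the $C^{1,\alpha}$ regularity of the earlier section one would get $|\nabla w|\lesssim\dist(\cdot,\Gamma(u))^{2\alpha-1}$, unbounded for $\alpha<\sfrac12$. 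The remaining ingredients — validity of the even/odd reflections across $B_1'$, the dyadic choice of $B$, and the uniformity (in $n$ and $L$ only) of the interior estimate — are routine.
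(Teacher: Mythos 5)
Your proposal is correct and follows the same approach as the paper's own argument: the paper states the decay $|u|\lesssim d^{\sfrac32}$, $|\nabla u|\lesssim d^{\sfrac12}$, $|\nabla^2u|\lesssim d^{-\sfrac12}$ (with $d=\dist(\cdot,\Gamma(u))$) as a direct consequence of Theorem~\ref{t.opt reg} and ``classical Schauder estimates,'' and then closes exactly as you do by the cancellation $|\nabla\vartheta|\le(1+|\nabla u|^2)^{-\sfrac32}|\nabla^2u\,\nabla u|\le C$. You have simply unpacked the implicit steps (vanishing of $\nabla u$ on $\Gamma(u)$, the even/odd reflection needed to apply interior estimates across $B_1'$ away from $\Gamma(u)$, the choice of a ball of radius $\asymp d$ disjoint from $\Gamma(u)$, and the scaling of the Schauder/De~Giorgi estimate), and your closing remark correctly isolates where the optimal exponent $\sfrac12$ is genuinely used.
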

\begin{proof}
Setting $d(x) := \dist(x, \Gamma(u))$, $x\in B_1$,
by the regularity result in Theorem~\ref{t.opt reg}
and the classical Schauder estimates we deduce that 
\[
|u(x)| \leq C\,d^{\sfrac32}(x), \quad
|\nabla u(x)| \leq C\,d^{\sfrac12}(x) \quad
\text{and}\quad |\nabla^2 u(x)| \leq C\,d^{-\sfrac12}(x),
\]
for some constant $C>0$, and therefore
\[
|\nabla \vartheta| = \big(1+|\nabla u|^2\big)^{-\sfrac32}
\, |\nabla^2u\, \nabla u| \leq C.\qedhere
\]
\end{proof}

The basic idea of exploiting the regularity of $u$ itself to reduce the problem to quadratic 
energies with Lipschitz coefficients has been recently considered in the literature 
for the classical obstacle problem (see, {\eg}, \cite{FoGeSp15, MoSp17, FoGerSp18}).

\subsection{Frequency function}\label{s:frequency}
Given the Lipschitz continuity of $\vartheta$ we can 
prove monotonicity of the following frequency type function at a point $x_0 \in B_1'$ 
defined by
\[
I_u(x_0,t) := \frac{r D_u(x_0,t)}{H_u(x_0,t)}
\quad  \forall\; r<1-|x_0|,
\]
where
\[
D_u(x_0,t) := \int \phi\big(\textstyle{\frac{|x-x_0|}{t}}\big)\vartheta(x)\,|\nabla u(x)|^2\d x,
\]
and
\[
H_u(x_0,t) := - \int \phi'\Big(\textstyle{\frac{|x-x_0|}{t}}\Big)
\,\vartheta(x)\frac{u^2(x)}{|x-x_0|}\,\d x.
\]
(see \cite{DS-memo} for the first use of this variation of Almgren's frequency function).
Here, $\phi:[0,+\infty) \to [0,+\infty)$ is the function given by
\[
\phi(t) := 
\begin{cases}
1 & \text{for } \; 0\leq t \leq \frac{1}{2},\\
2\,(1-t) & \text{for } \; \frac{1}{2} < t \leq 1,\\
0 & \text{for } \; 1 < t .\\
\end{cases}
\]
It is also useful to introduce
\[
E_u(x_0,t):=
-\int \phi'\Big(\textstyle{\frac{|x-x_0|}{t}}\Big)
\vartheta(x)\frac{|x-x_0|}{t^2}
\Big(\nabla u(x) \cdot\frac{x-x_0}{|x-x_0|}\Big)^2\, \d x.
\]
In what follows, we shall not highlight the dependence on the 
base point $x_0$ in the quantities above if it coincides with 
the origin.

By exploiting the integration by parts formulas
used in \cite{FoGeSp15}, we show the following
variant of the monotonicity formula for the frequency.

\begin{proposition}\label{p:monotonia+lower}
Let $u$ be a solution to the thin obstacle problem 
\eqref{e:ob prob} in $B_1$. 
Then, there exists a nonnegative constant
$C_{\ref{p:monotonia+lower}}$ 
depending on $\Lip(u)$, such that for all $x_0 \in B_1'$, 
and for $\mathcal{L}^1$ a.e. $t\in(0,1-|x_0|)$
\begin{equation}\label{e:Iprime}
I_u'(x_0,t)=\frac{2t}{H_u^2(x_0,t)}
\big(H_u(x_0,t)E_u(x_0,t)-D_u^2(x_0,t)\big)+R_u(x_0,t),
\end{equation}
with $|R_u(x_0,t)|\leq C_{\ref{p:monotonia+lower}} I_u(x_0,t)$.
In particular, the function $(0,1-|x_0|)\ni t\mapsto
e^{C_{\ref{p:monotonia+lower}}t}I_u(x_0,t)$ is nondecreasing and
\begin{equation}\label{e:monotonia freq}
e^{C_{\ref{p:monotonia+lower}}r_1}I_u(x_0,r_1) -
e^{C_{\ref{p:monotonia+lower}}r_0}I_u(x_0,r_0)  \geq
\int_{r_0}^{r_1}\frac{2\,t\,
e^{C_{\ref{p:monotonia+lower}}t}}{H_u^2(x_0,t)}
\Big(H_u(x_0,t) \; E_u(x_0,t) - D_u^2(x_0,t) \Big)\,\d t,
\end{equation} 
for $0 < r_0 < r_1 < 1 - |x_0|$,
and the limit 
$I_u(x_0,0^+)=\lim_{t\downarrow 0}I_u(x_0,t)$ exists finite.
\end{proposition}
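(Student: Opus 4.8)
The plan is to follow the by-now classical scheme for Almgren-type frequency functions in obstacle problems, adapted to the Lipschitz-coefficient quadratic energy $\mathcal{Q}$ from \eqref{e:Aenrg}. The starting point is that $u$ minimizes $\mathcal{Q}$ over $\mathcal{A}_g$, hence satisfies the Euler--Lagrange inequality $\int_{B_1}\vartheta\,\nabla u\cdot\nabla(v-u)\,dx\geq 0$ for all admissible $v$, together with the complementarity conditions $\div(\vartheta\nabla u)=0$ in $B_1^+$ and the Signorini conditions on $B_1'$ (the $n+1$-st component of $\vartheta\nabla u$ has a sign on $B_1'$ and vanishes where $u>0$). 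The first step is to differentiate $D_u$, $H_u$ in $t$ and to run the integration-by-parts (domain-variation and outer-variation) identities. Because the weight $\vartheta$ is not constant, these identities pick up error terms involving $\nabla\vartheta$; crucially $\vartheta\in W^{1,\infty}$ by Lemma~\ref{l:thetalip}, so these errors are controlled by $\Lip(\vartheta)$ times the natural quantities. The Signorini boundary terms on $B_1'$ drop out thanks to the structure of the conditions in \eqref{e.signorini bc} (the product $u\,\partial_{n+1}u\equiv 0$ and the even symmetry kill the would-be boundary contributions), exactly as in the Dirichlet case; this is the point where one uses the integration-by-parts scheme from \cite{FoGeSp15}.

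The key computations are: (a) $H_u'(x_0,t)=\frac{n}{t}H_u(x_0,t)+2D_u(x_0,t)+O(1)\cdot H_u(x_0,t)$, where the $O(1)$ error comes from $\nabla\vartheta$ and is bounded by $C\,\Lip(\vartheta)$; (b) $D_u'(x_0,t)=\frac{n-1}{t}D_u(x_0,t)+2E_u(x_0,t)+O(1)\cdot D_u(x_0,t)$, with the analogous error term. Here $E_u$ is precisely the ``radial Rayleigh quotient'' quantity defined just before the proposition. Combining these into the logarithmic derivative of $I_u=tD_u/H_u$ gives
\[
I_u'(x_0,t)=\frac{I_u(x_0,t)}{t}+\frac{t\,D_u'(x_0,t)}{H_u(x_0,t)}-\frac{t\,D_u(x_0,t)H_u'(x_0,t)}{H_u^2(x_0,t)},
\]
and substituting (a), (b) the singular $\frac1t$-terms cancel, leaving exactly the main term $\frac{2t}{H_u^2}(H_uE_u-D_u^2)$ plus a remainder $R_u$. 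The Cauchy--Schwarz inequality $D_u^2\leq H_uE_u$ (applied to the measure $-\phi'(\tfrac{|x-x_0|}{t})\vartheta\,\tfrac{dx}{|x-x_0|}$, pairing $u$ against $\nabla u\cdot\tfrac{x-x_0}{|x-x_0|}$) shows the main term is nonnegative, and collecting the $\nabla\vartheta$ errors shows $|R_u(x_0,t)|\leq C_{\ref{p:monotonia+lower}} I_u(x_0,t)$ with $C_{\ref{p:monotonia+lower}}$ depending only on $\Lip(u)$ (through $\Lip(\vartheta)$ and the ellipticity bounds \eqref{e:Acoercive}). From $I_u'\geq -C_{\ref{p:monotonia+lower}}I_u$ one gets that $e^{C_{\ref{p:monotonia+lower}}t}I_u(x_0,t)$ is nondecreasing, hence \eqref{e:monotonia freq} follows by integrating \eqref{e:Iprime} and the monotone limit $I_u(x_0,0^+)$ exists; finiteness follows because $e^{C_{\ref{p:monotonia+lower}}t}I_u(x_0,t)$ is bounded above by its value at any fixed small $t_0>0$, which is finite (here one must know $H_u(x_0,t)>0$ for $t$ small, which holds at free boundary points since $u$ does not vanish identically near $x_0$ — this uses a unique continuation / non-degeneracy remark).

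The main obstacle is bookkeeping the $\nabla\vartheta$-error terms cleanly and verifying they are all absorbable into $C_{\ref{p:monotonia+lower}}I_u$: one has to be careful that errors appearing in $D_u'$ are of the form $\int|\nabla\vartheta||\nabla u|^2\cdot(\ldots)$, which must be rewritten in terms of $D_u$ (and not, say, in terms of a quantity controlling $E_u$ from above), and similarly for the $H_u'$ error; the combination then has to land as $R_u=O(1)I_u$ rather than merely $O(1)\sqrt{I_u}$ or worse. The secondary subtlety is ensuring the differentiation under the integral sign and the boundary-term cancellations on $B_1'$ are justified despite $u$ being only $C^{1,\sfrac12}$ up to $B_1'$ and $W^{2,2}_{\loc}$ in $B_1^+$ — this is handled by working with the a.e.-defined derivative in $t$ (as in the statement, the identity holds for $\mathcal{L}^1$-a.e.\ $t$) and using the $W^{2,2}$ estimate of Proposition~\ref{p.H2} together with the vanishing of the Signorini boundary integrand, exactly as in \cite{FoSp17}. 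Everything else is a routine adaptation of the constant-coefficient argument.
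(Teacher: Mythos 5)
Your proposal is correct and follows essentially the same route as the paper: differentiate $D_u$ and $H_u$ via the integration‑by‑parts/domain‑variation identities of \cite{FoGeSp15} (with errors controlled by $\Lip(\vartheta)$ from Lemma~\ref{l:thetalip}), combine into the logarithmic derivative of $I_u$ so the $1/t$ terms cancel, use the representation \eqref{e:D} plus Cauchy--Schwarz to show $H_uE_u - D_u^2 \ge 0$, and integrate the resulting differential inequality. Your bookkeeping of the remainder as $|R_u|\le C I_u$ and the deduction of \eqref{e:monotonia freq} and of the finite limit $I_u(x_0,0^+)$ match the paper's argument (which defers the variation identities to its Appendix).
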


\begin{proof}
We need to estimate the derivatives of $D_u$ and $H_u$:
by exploiting the integration by parts formulas
used in \cite{FoGeSp15} one can show that
for every $x_0\in B_1'$ and for $\cL^1$ a.e.
$r\in(0,1-|x_0|)$,
\begin{align}\label{e:Dprime}
D_u^\prime(x_0,r) = {} &
\frac{n-1}r D_u(x_0,r)+ 2E_u(x_0,r)+\eps_D(x_0,r),
\end{align} 
and
\begin{equation}\label{e:Hprime}
H_u^\prime(x_0,r)=\frac nr H_u(x_0,r)
+2D_u(x_0,r)+\eps_H(x_0,r),
\end{equation} 
with $|\eps_D(x_0,r)| \leq C\,D_u(x_0,r)$ and
$|\eps_H(x_0,r)|\leq CH_u(x_0,r)$
for some constant $C>0$ depending on $\Lip(u)$.
Moreover, for all  $0<r<1-|x_0|$,
\begin{equation}\label{e:D}
D_u(x_0,r)=-\textstyle{\frac1r}
\int \phi'\big(\textstyle{\frac{|x-x_0|}{r}}\big)\,\vartheta(x)u(x)
\nabla u(x)\cdot\frac{x-x_0}{|x-x_0|}\,\d x.
\end{equation}
The details of \eqref{e:Dprime}, \eqref{e:Hprime} and
\eqref{e:D} are postponed to the appendix.

For the sake of simplicity assume $x_0=\underline{0}$ (recall that 
in this case we drop the dependence on the base point in the relevant quantities).
By \eqref{e:Hprime} and \eqref{e:Dprime}, 
we compute the derivative of $\log I_u(t)$ as follows: 
\begin{equation*}
\frac{I_u'(t)}{I_u(t)}=\frac 1t+\frac{D_u'(t)}{D_u(t)}-
\frac{H_u'(t)}{H_u(t)}
 =2\,\frac{E_u(t)}{D_u(t)}-2\frac{D_u(t)}{H_u(t)}
 +\frac{\eps_D(t)}{D_u(t)}-\frac{\eps_H(t)}{H_u(t)}.
\end{equation*}
Hence, being $|\eps_D(t)|\leq C D_u(t)$ and 
$|\eps_H(t)|\leq CH_u(t)$, \eqref{e:Iprime} readily follows.
In addition, 
\[
I_u'(t)+C_{\ref{p:monotonia+lower}} I_u(t)\geq\,\frac{2t}{H_u^2(t)}
\big(H_u(t)E_u(t)-D_u^2(t)\big),
\]
thus leading to inequality \eqref{e:monotonia freq}
by multiplying with $e^{C_{\ref{p:monotonia+lower}}t}$ and integrating.
Finally, by \eqref{e:D} and the Cauchy--Schwarz inequality, 
the map $t\mapsto e^{C_{\ref{p:monotonia+lower}}t}I_u(t)$ is non-decreasing. 
\end{proof}

We also derive additive quasi-monotonicity formula for the frequency.
\begin{corollary}\label{c:monotonia add}
Let $u$ be a solution to the thin obstacle problem 
\eqref{e:ob prob} in $B_1$. 
For every $A>0$ there exists $C_{\ref{c:monotonia add}} =
C_{\ref{c:monotonia add}}(\Lip(u),A)>0$ such that 
for all $x_0 \in B_1'$ with $I_u(x_0,r)\leq A$,
$r\in(0,1-|x_0|)$, then
\begin{equation}\label{e:monotonia add}
(0,r]\ni t\longmapsto I_u(x_0,t)+C_{\ref{c:monotonia add}}t
\quad\textrm{is nondecreasing.}
\end{equation}
\end{corollary}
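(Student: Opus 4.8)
The plan is to obtain the additive estimate \eqref{e:monotonia add} as an immediate consequence of the multiplicative quasi-monotonicity of Proposition~\ref{p:monotonia+lower}, using the a priori bound $I_u(x_0,r)\le A$ to control the frequency uniformly on the whole interval $(0,r]$. Throughout, write $C:=C_{\ref{p:monotonia+lower}}$, which depends only on $\Lip(u)$, and set $g(t):=e^{Ct}\,I_u(x_0,t)$, so that $g$ is nondecreasing on $(0,1-|x_0|)$ by Proposition~\ref{p:monotonia+lower}.

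First I would record the uniform bound on the frequency: since $g$ is nondecreasing and $r<1-|x_0|<1$, for every $t\in(0,r]$ one has $g(t)\le g(r)=e^{Cr}\,I_u(x_0,r)\le e^{C}A$, and hence also $I_u(x_0,t)=e^{-Ct}g(t)\le e^{C}A$. Next, for $0<s\le t\le r$ I would estimate, using $g(t)\ge g(s)$, $e^{-Ct}>0$, the identity $e^{-Ct}-e^{-Cs}=-\int_s^t C\,e^{-C\tau}\,\d\tau$ with $e^{-C\tau}\le1$, and finally $g(s)\le e^{C}A$,
\[
I_u(x_0,t)-I_u(x_0,s)=e^{-Ct}g(t)-e^{-Cs}g(s)\ge g(s)\big(e^{-Ct}-e^{-Cs}\big)\ge -C\,g(s)\,(t-s)\ge -C\,e^{C}A\,(t-s).
\]
Rearranging, this says precisely that $t\mapsto I_u(x_0,t)+C_{\ref{c:monotonia add}}\,t$ is nondecreasing on $(0,r]$ with $C_{\ref{c:monotonia add}}:=C_{\ref{p:monotonia+lower}}\,e^{C_{\ref{p:monotonia+lower}}}A$, which is the claim. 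Equivalently, the same conclusion follows by integrating the differential inequality $I_u'(x_0,t)\ge -C\,I_u(x_0,t)\ge -C\,e^{C}A$, valid for a.e.\ $t\in(0,r]$ by \eqref{e:Iprime} together with the nonnegativity of its leading term — a consequence of the Cauchy--Schwarz inequality applied to \eqref{e:D}; the multiplicative-to-additive passage above is preferable in that it avoids any discussion of the regularity of $t\mapsto I_u(x_0,t)$.

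This argument is essentially routine, so there is no genuine obstacle beyond bookkeeping of constants; the one point demanding a little care is that $C_{\ref{c:monotonia add}}$ must be independent of the base point $x_0$ and of the radius $r$. This is guaranteed by the two structural facts used above: that $C_{\ref{p:monotonia+lower}}$ depends only on $\Lip(u)$, and that $r<1$, which is what allows the monotonicity of $g$ to upgrade the hypothesis $I_u(x_0,r)\le A$ into the $r$-independent bound $I_u(x_0,\cdot)\le e^{C_{\ref{p:monotonia+lower}}}A$ on all of $(0,r]$.
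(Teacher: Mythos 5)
Your proof is correct and rests on the same two ingredients as the paper's: the multiplicative quasi-monotonicity of $e^{C_{\ref{p:monotonia+lower}}t}I_u(x_0,t)$ together with $r<1$, which upgrades the hypothesis $I_u(x_0,r)\le A$ to the uniform bound $I_u(x_0,t)\le e^{C_{\ref{p:monotonia+lower}}}A$ on $(0,r]$, and then the observation that the rest $R_u$ in \eqref{e:Iprime} is therefore bounded by an absolute constant. Your ``alternative'' (integrating $I_u'\ge R_u\ge -C_{\ref{p:monotonia+lower}}e^{C_{\ref{p:monotonia+lower}}}A$ a.e., using the Cauchy--Schwarz nonnegativity of the leading term) is exactly the paper's argument; your primary version is a finite-difference rewrapping of the same inequality that works directly on the nondecreasing function $g(t)=e^{C_{\ref{p:monotonia+lower}}t}I_u(x_0,t)$ and thereby bypasses any discussion of the a.e.\ differentiability of $t\mapsto I_u(x_0,t)$ — a mild but genuine simplification at no cost. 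One cosmetic note: you arrive at $C_{\ref{c:monotonia add}}=C_{\ref{p:monotonia+lower}}\,e^{C_{\ref{p:monotonia+lower}}}A$, whereas the paper writes $C_{\ref{p:monotonia+lower}}\,e^{C_{\ref{p:monotonia+lower}}A}$; your form is the one that falls out of the computation (the $A$ enters linearly through the bound $I_u\le e^{C_{\ref{p:monotonia+lower}}}A$, not in the exponent), and in any case both are admissible constants depending only on $\Lip(u)$ and $A$.
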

\begin{proof}
Proposition \ref{p:monotonia+lower} implies that 
$I_u(x_0,t)\leq e^{C_{\ref{p:monotonia+lower}}}A$ for 
all $t\in(0,r]$.
Therefore, from inequality \eqref{e:Iprime} and the estimate on the rest $R_u(x_0,t)$, 
we deduce the conclusion with $C_{\ref{c:monotonia add}}:=C_{\ref{p:monotonia+lower}}\,
e^{C_{\ref{p:monotonia+lower}}\,A}$. 
\end{proof}

\subsection{Lower bound on the frequency and compactness}
The frequency of a solution to \eqref{e:ob prob} 
at free boundary points is bounded from below by a universal constant.
A preliminary lemma is needed.
\begin{lemma}\label{l:Poinc}
Let $u$ be a solution to the thin obstacle problem 
\eqref{e:ob prob} in $B_1$. Then, there exists a constant 
$C=C(n,[\nabla u]_{C^{0,\sfrac12}(B_{\sfrac34}^+)})>0$ such that for every 
$x_0\in \Gamma(u)\cap B_{\sfrac14}$ and for every $0<r<\sfrac12$
\begin{equation}\label{e:Poinc2}
\int_{\partial B_r(x_0)}|u(x)|^2\d x\leq 
C r \int_{B_r(x_0)}|\nabla u(x)|^2\d x+C r^{n+3}.
\end{equation}
\end{lemma}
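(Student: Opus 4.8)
The plan is to reduce \eqref{e:Poinc2} to a Poincaré--Wirtinger type inequality on spheres, exploiting that $u$ vanishes on a large subset of $\partial B_r(x_0)$ near a free boundary point. First I would fix $x_0 \in \Gamma(u) \cap B_{\sfrac14}$ and $0 < r < \sfrac12$, and work on the sphere $\partial B_r(x_0)$. Since $x_0 \in \Gamma(u)$, by Lemma~\ref{l.blowup} (or rather its proof, which gives $u(x_0 + r\,\cdot\,)/r \to 0$ uniformly) together with the optimal regularity $u \in C^{1,\sfrac12}_{\loc}$ from Theorem~\ref{t.opt reg}, we control $u$ on $B_r(x_0)$; but the cleaner route is to use the constraint $u|_{B_1'} \geq 0$ and the fact that $x_0$ is on the boundary of $\Lambda(u)$, so that $u(x_0) = 0$. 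In fact, $u \in C^1$ with $\nabla u(x_0)$ having vanishing tangential part (since $x_0 \in \Gamma(u) \subset \Lambda(u)$ forces $\nabla' u(x_0) = 0$) and vanishing normal part (by \eqref{e.derivata normale zero} in the proof of Proposition~\ref{p.C1}); hence $\nabla u(x_0) = 0$ and $u(x_0) = 0$.

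The key step is then the following elementary estimate: for any $w \in C^1(\overline{B_r(x_0)})$ with $w(x_0) = 0$ and $\nabla w(x_0) = 0$, one has on each sphere, after a standard slicing/averaging argument,
\begin{equation*}
\int_{\partial B_r(x_0)} |w|^2 \, d\cH^{n} \leq C\, r \int_{B_r(x_0)} |\nabla w|^2 \, dx + C\,r^{n+3}.
\end{equation*}
To see this, write $u(x) = u(x) - u(x_0) - \nabla u(x_0)\cdot(x-x_0)$ and use that along the segment from $x_0$ to $x \in \partial B_r(x_0)$ one has $u(x) = \int_0^r \big(\nabla u(x_0 + s\nu) - \nabla u(x_0)\big) \cdot \nu \, ds$ where $\nu = (x-x_0)/|x-x_0|$. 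By the $C^{0,\sfrac12}$ regularity of $\nabla u$ on $B^+_{\sfrac34}$ (valid on both sides by even symmetry), $|\nabla u(x_0 + s\nu) - \nabla u(x_0)| \leq [\nabla u]_{C^{0,\sfrac12}(B_{\sfrac34}^+)}\, s^{\sfrac12}$, so $|u(x)| \leq C\, r^{\sfrac32}$ pointwise, which already yields $\int_{\partial B_r(x_0)} |u|^2 \leq C\, r^{n+3}$ with $C$ depending on $n$ and $[\nabla u]_{C^{0,\sfrac12}}$. This crude bound alone suffices to prove \eqref{e:Poinc2} with the second term on the right-hand side absorbing everything.

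Alternatively, if one wants the Dirichlet term genuinely present (which is the natural form for bounding the frequency from below), I would argue by a normalized contradiction/compactness argument: suppose \eqref{e:Poinc2} fails along a sequence $u_k$ of solutions (with uniform Hölder bounds on $\nabla u_k$) and points $x_k \in \Gamma(u_k)$, rescale to $v_k = u_k(x_k + r_k\,\cdot\,)/\sqrt{\fint_{\partial B_1} |u_k(x_k + r_k\,\cdot\,)|^2}$, extract a limit $v_\infty$ which is a nonzero solution of the (constant-coefficient, after passing to the limit $\vartheta \to 1$ on $\Gamma$) Signorini problem with $0 \in \Gamma(v_\infty)$, and $\int_{B_1} |\nabla v_\infty|^2 = 0$, forcing $v_\infty$ constant, hence zero by $v_\infty(0) = 0$, contradicting $\fint_{\partial B_1} |v_\infty|^2 = 1$. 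The main obstacle is handling the $r^{n+3}$ error term in the compactness scheme: one must distinguish the regime where the $L^2$ norm on the sphere dominates $r^{n+3}$ (where the contradiction argument runs) from the regime where it does not (where the conclusion is trivial). In practice, given that the optimal regularity is already available from Theorem~\ref{t.opt reg}, the direct pointwise estimate in the second paragraph is the shortest honest path, and the Dirichlet term is then added for free since it is nonnegative; I expect the write-up to follow that line, invoking only $u(x_0) = 0$, $\nabla u(x_0) = 0$, and the $C^{0,\sfrac12}$ bound on $\nabla u$.
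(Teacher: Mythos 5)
Your proposal is correct, and the direct pointwise estimate in your second paragraph is a valid proof of \eqref{e:Poinc2}, but it takes a genuinely different and in fact more elementary route than the paper. Both arguments rest on the same three ingredients: $u(x_0)=0$, $\nabla u(x_0)=0$ (which indeed follows from $\nabla' u(x_0)=0$ because $x_0$ is an interior minimum of $u|_{B_1'}\geq 0$, and $\partial_{n+1}u(x_0)=0$ from \eqref{e.derivata normale zero}), and the one-sided $C^{0,\sfrac12}$ estimate on $\nabla u$, which applies along the segment from $x_0\in B_1'$ to any $x\in\partial B_r(x_0)$ because that segment stays in one closed half-ball, where Theorem~\ref{t.opt reg} and even symmetry give the seminorm bound. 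You then Taylor-expand to get $|u(x)|\leq C|x-x_0|^{\sfrac32}$, integrate over the sphere, and observe that the Dirichlet term is added for free; this in fact yields the stronger bound $\int_{\partial B_r(x_0)}|u|^2\,\d\cH^n\leq Cr^{n+3}$. The paper proceeds instead via a Poincar\'e--Wirtinger inequality on the sphere plus a separate estimate of the spherical mean: it differentiates $r\mapsto\fint_{B_r(x_0)}u$, bounds the derivative by $Cr^{\sfrac12}$ using the same H\"older seminorm, integrates to get $|\fint_{B_r(x_0)}u|\leq Cr^{\sfrac32}$, and passes to the spherical average via the identity $\frac{\d}{\d r}\fint_{B_r}u=\frac{n+1}{r}(\fint_{\partial B_r}u-\fint_{B_r}u)$. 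Your pointwise route is shorter and honest, and you correctly identify it as such; what the paper's route buys is that it keeps the Dirichlet term genuinely ``alive'' in the Poincar\'e structure, which matches how Lemma~\ref{l:freq lower} then uses \eqref{e:Poinc2} to compare $H_u$ with $rD_u$, even though — as your argument shows — the stronger $r^{n+3}$ bound would also suffice there. Your third-paragraph compactness alternative would also work but is, as you say yourself, not the most economical option and needs care with the $r^{n+3}$ error in the normalization; the direct Taylor estimate is the preferable write-up.
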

\begin{proof}
By Poincar\'e-Wirtinger inequality we have 
\begin{equation}\label{e:pw}
 \int_{\partial B_r(x_0)}|u(x)|^2\d \mathcal{H}^n\leq C r \int_{B_r(x_0)}|\nabla u(x)|^2\d x
 +Cr^n\Big(\fint_{\partial B_r(x_0)}u(x)\,\d \mathcal{H}^n\Big)^2,
\end{equation}
for some dimensional constant $C>0$.
To estimate the mean value of $u$ we argue as follows.
By direct calculation
 \[
  \frac{\d}{\d r}\Big(\fint_{B_r(x_0)}u(x)\,\d x\Big)=
  \fint_{B_r(x_0)}\langle\nabla u(x),\frac{x}{r}\rangle\d x.
 \]
Therefore, recalling that $\nabla u(x_0)=\underline{0}$ since $x_0\in\Gamma(u)$, 
by one-sided $C^{1,\sfrac12}$ regularity we find that
\begin{align}\label{e:primo termine}
\left| \frac{\d}{\d r}\Big(\fint_{B_r(x_0)}u(x)\,\d x\Big)\right|
\leq\fint_{B_r(x_0)}|\nabla u(x)|\d x\leq C r^{\sfrac12}
\end{align}
with $C=C(n,[\nabla u]_{C^{0,\sfrac12}(B_{\sfrac34}^+)})>0$. 
Hence, recalling that $u(x_0)=0$, by integration we infer that 
\begin{align}\label{e:secondo termine}
 \left|\fint_{B_r(x_0)}u(x)\,\d x\right|\leq C\,{r^{\sfrac32}}.
\end{align}
Finally, noting that
\[
  \frac{\d}{\d r}\Big(\fint_{B_r(x_0)}u(x)\,\d x\Big)=
 \frac{n+1}{r}\Big(\fint_{\partial B_r(x_0)}u(x)\,\d \mathcal{H}^n
 -\fint_{B_r(x_0)}u(x)\,\d x\Big)
 \]
we conclude from \eqref{e:primo termine} and \eqref{e:secondo termine} that
for some $C=C(n,[\nabla u]_{C^{0,\sfrac12}(B_{\sfrac34}^+)})>0$ we have
 \[
 \left|\fint_{\partial B_r(x_0)}u(x)\,\d \mathcal{H}^n\right|
 \leq Cr^{\sfrac32}.
 \]
 In turn, the latter inequality and \eqref{e:pw} yield \eqref{e:Poinc2}.
\end{proof}

A first rough bound from below on the frequency then easily follows.
\begin{lemma}\label{l:freq lower}
Let $u$ be a solution to the thin obstacle problem 
\eqref{e:ob prob} in $B_1$.  
There exist a constant $C_{\ref{l:freq lower}}=C_{\ref{l:freq lower}}
(n,L, [\nabla u]_{C^{0,\sfrac12}(B_{\sfrac34}^+)})>0$ and a radius
$r_{\ref{l:freq lower}}=r_{\ref{l:freq lower}}(n,L,[\nabla u]_{C^{0,\sfrac12}(B_{\sfrac34}^+)})\in(0,\sfrac12)$
such that, for every $x_0 \in \Gamma(u)\cap B_{\sfrac14}$ we have for all $r\in (0, r_{\ref{l:freq lower}})$
\begin{equation}\label{e:freq lower}
I_u(x_0,r) \geq C_{\ref{l:freq lower}}.
\end{equation}
\end{lemma}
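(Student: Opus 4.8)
The plan is to first reduce the lemma to a nondegeneracy statement by means of Lemma~\ref{l:Poinc}, and then to prove that nondegeneracy by an Almgren-type blow-up based on the quasi-monotonicity of the frequency (Proposition~\ref{p:monotonia+lower}).

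For the reduction, fix $x_0\in\Gamma(u)\cap B_{\sfrac14}$ and $0<r<\sfrac12$. Since $\phi'=-2$ on $(\sfrac12,1)$, the coarea formula and $\vartheta\le1$ give $H_u(x_0,r)\le\frac4r\int_{\sfrac r2}^r\big(\int_{\de B_\rho(x_0)}u^2\big)\,\d\rho$; inserting Lemma~\ref{l:Poinc} and then using $|\nabla u(x)|\le C|x-x_0|^{\sfrac12}$ on $\{\sfrac r2<|x-x_0|<r\}$ (a consequence of Theorem~\ref{t.opt reg}, as in the proof of Lemma~\ref{l:thetalip}) together with $D_u(x_0,r)\ge(1+L^2)^{-\sfrac12}\int_{B_{\sfrac r2}(x_0)}|\nabla u|^2$, one obtains $H_u(x_0,r)\le C\,r\,D_u(x_0,r)+C\,r^{n+3}$ with $C=C(n,L,[\nabla u]_{C^{0,\sfrac12}(B_{\sfrac34}^+)})$. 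The same pointwise gradient bound gives $D_u(x_0,r)\le C\,r^{n+2}$, whence
\begin{equation*}
I_u(x_0,r)=\frac{r\,D_u(x_0,r)}{H_u(x_0,r)}\ \ge\ \frac{D_u(x_0,r)}{C\,D_u(x_0,r)+C\,r^{n+2}}\,.
\end{equation*}
Hence it suffices to prove the nondegeneracy estimate: there are $c_0>0$ and $\rho_0\in(0,\sfrac12)$ such that $D_u(x_0,r)\ge c_0\,r^{n+2}$ for every $x_0\in\Gamma(u)\cap B_{\sfrac14}$ and $r<\rho_0$.

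Suppose this fails; then there are $x_j\in\Gamma(u)\cap B_{\sfrac14}$ and $r_j\downarrow0$ with $D_u(x_j,r_j)=o(r_j^{n+2})$, and from the bound $H_u(x_j,r_j)\le C\,r_j^{n+3}$ just used this forces $I_u(x_j,r_j)\to0$. I would then consider the rescalings $u_j(y):=\kappa_j^{-1}u(x_j+r_j y)$ with $\kappa_j:=(r_j^{-n}H_u(x_j,\sfrac{r_j}2))^{\sfrac12}$: by \eqref{e:Aenrg} each $u_j$ solves the thin obstacle problem for $\div(\vartheta_j\nabla\cdot)=0$ with $\vartheta_j(y):=\vartheta(x_j+r_j y)$, which by Lemma~\ref{l:thetalip} and $\vartheta(x_j)=1$ are uniformly elliptic with $\vartheta_j\to1$ locally uniformly, and $u_j(\underline0)=0$. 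Two consequences of Proposition~\ref{p:monotonia+lower} enter: $\sup_{0<s\le r_j}I_u(x_j,s)\le e^{C_{\ref{p:monotonia+lower}}r_j}I_u(x_j,r_j)\to0$, and — integrating $\tfrac{\d}{\d t}\log H_u(x_j,t)=\tfrac{n+2I_u(x_j,t)}t+O(1)$ (from \eqref{e:Hprime}) on $[\sfrac{r_j}2,r_j]$ and using the previous bound — the doubling estimate $H_u(x_j,r_j)\le C\,H_u(x_j,\sfrac{r_j}2)$ for $j$ large. Writing $\tilde D_j,\tilde H_j,\tilde I_j$ for the quantities of Section~\ref{s:frequency} relative to $\vartheta_j$, the scaling of $D_u$ and $H_u$ yields $\tilde H_j(\underline0,\sfrac12)=1$, $\tilde I_j(\underline0,\lambda)=I_u(x_j,\lambda r_j)$, and $\tilde H_j(\underline0,1)=H_u(x_j,r_j)/H_u(x_j,\sfrac{r_j}2)\le C$.

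Now $\tilde D_j(\underline0,1)=\tilde I_j(\underline0,1)\,\tilde H_j(\underline0,1)\le C\,I_u(x_j,r_j)\to0$, and since $\phi\ge\sfrac12$ on $B_{\sfrac34}$ this gives $\int_{B_{\sfrac34}}|\nabla u_j|^2\to0$; on the other hand $\tilde H_j(\underline0,\sfrac12)=1$ forces $\int_{\{\sfrac14<|y|<\sfrac12\}}u_j^2\le C$. As the annulus $\{\sfrac14<|y|<\sfrac12\}$ is compactly contained in $B_{\sfrac34}$, the Poincaré--Wirtinger inequality on $B_{\sfrac34}$ with mean over it also bounds $\int_{B_{\sfrac34}}u_j^2$; hence, along a subsequence, $u_j\rightharpoonup u_\infty$ in $W^{1,2}(B_{\sfrac34})$ with $\nabla u_\infty\equiv0$, and by the uniform interior and boundary H\"older estimates for uniformly elliptic thin obstacle problems we may also assume $u_j\to u_\infty$ locally uniformly, so $u_\infty\equiv u_\infty(\underline0)=\lim_j u_j(\underline0)=0$ on $B_{\sfrac34}$. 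Then strong $L^2$ convergence on $\{\sfrac14<|y|<\sfrac12\}\subset\subset B_{\sfrac34}$ and $\vartheta_j\le1$ give $1=\tilde H_j(\underline0,\sfrac12)=-\int\phi'(2|y|)\,\vartheta_j\,u_j^2/|y|\,\d y\to0$, a contradiction; this proves the nondegeneracy, hence the lemma. I expect the delicate step to be this compactness: the frequency of $u$ at $x_j$ is only controlled at scales $\le r_j$, so the normalisation annulus must, after rescaling by $r_j$, still lie strictly inside the ball $B_{\sfrac34}$ on which $\tilde D_j(\underline0,1)\to0$ forces $\nabla u_j\to0$, and the doubling estimate from \eqref{e:Hprime} is exactly what bounds $\tilde H_j$ at the scale $1$ at which $\tilde D_j$ dominates $\int_{B_{\sfrac34}}|\nabla u_j|^2$. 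With the annulus compactly inside that ball, no unique continuation or passage to the limit in the Euler--Lagrange equation is needed — a plain strong $L^2$ convergence closes the argument.
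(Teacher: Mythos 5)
Your reduction to the Poincar\'e--type inequality $H_u(x_0,r)\leq CrD_u(x_0,r)+Cr^{n+3}$ is the same starting point as the paper, and your blow-up machinery at the end would in fact deliver the lemma if launched from the right place, but the intermediate nondegeneracy statement you reduce to is false and the implication you use to reach it does not hold.

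The statement ``$D_u(x_0,r)\geq c_0\,r^{n+2}$ for all $x_0\in\Gamma(u)\cap B_{\sfrac14}$ and all small $r$'' is not true in general. If $x_0$ is a free boundary point with $I_u(x_0,0^+)=\nu$, then the scaling built into the frequency (cf.\ \eqref{e:H1}--\eqref{e:H2} and \eqref{e:rappr D}) gives $D_u(x_0,r)\sim r^{n+2\nu-1}$ and $H_u(x_0,r)\sim r^{n+2\nu}$. The free boundary carries points with $\nu>\sfrac32$ (e.g.\ $\nu=2$, corresponding to the blow-up profile $\Phi_1$; more generally Theorem~\ref{t:main precise} allows $\nu\in\{2m,2m-\sfrac12,2m+1\}$). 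At such points $D_u(x_0,r)/r^{n+2}\sim r^{2\nu-3}\to0$, so the nondegeneracy fails. Consequently, the sequence $x_j,r_j$ with $D_u(x_j,r_j)=o(r_j^{n+2})$ that you produce is not automatically pathological, and there is no contradiction to be reached from it.

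Moreover, the step ``$D_u(x_j,r_j)=o(r_j^{n+2})$ together with $H_u(x_j,r_j)\leq Cr_j^{n+3}$ forces $I_u(x_j,r_j)\to0$'' is a non sequitur. Since $I_u=rD_u/H_u$ and you only control $H_u$ from above, smallness of $D_u$ gives no upper bound on $I_u$: if both $D_u$ and $H_u$ decay with the same higher-order exponent (as at a frequency-$\nu$ point with $\nu>\sfrac32$), $I_u$ stays put at $\nu$. The lower bound on $H_u$ that would be needed here (of order $r^{n+3}$, i.e.\ corresponding to frequency $\leq\sfrac32$) is essentially what one is trying to prove, so the reasoning is circular.

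Your compactness argument is sound as such, and it would close the proof if you negate the conclusion of the lemma directly — taking $x_j\in\Gamma(u)\cap B_{\sfrac14}$ and $r_j\downarrow0$ with $I_u(x_j,r_j)\to0$ — since quasi-monotonicity then gives $\sup_{s\leq r_j}I_u(x_j,s)\to0$, the doubling of $H_u$ from \eqref{e:Hprime} bounds $\tilde H_j(\underline0,1)$, and $\tilde D_j(\underline0,1)\leq CI_u(x_j,r_j)\to0$ forces the limit to be a constant, contradicting the normalisation $\tilde H_j(\underline0,\sfrac12)=1$. The paper instead avoids compactness entirely: it combines $H_u\leq CrD_u+Cr^{n+3}$ with the dichotomy, at each fixed $x_0$, between $e^{C_{\ref{p:monotonia+lower}}r}I_u(x_0,r)\geq1$ for all small $r$, and $I_u(x_0,r)<1$ below some $r_{x_0}$; in the second case the quasi-monotonicity of $H_u$ (Corollary~\ref{c:H}) gives $H_u(x_0,r)\gtrsim r^{n+1}$, so that $r^{n+3}/H_u\to0$ and dividing the Poincar\'e inequality by $H_u$ yields a lower bound for $I_u$ directly; quasi-monotonicity then propagates it to all $r<r_{\ref{l:freq lower}}$. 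Either route works, but you should drop the detour through nondegeneracy of $D_u$.
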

\begin{proof}
The co-area formula and an integration by parts give 
\begin{align}\label{e:rappr H}
H_u(x_0,r) = 2\int_{\frac{r}{2}}^r \frac{dt}{t}
\int_{\de B_t(x_0)} \vartheta(x)|u(x)|^2\d \cH^{n}(x),
\end{align}
and
\begin{align}\label{e:rappr D}
D_u(x_0,r) = \frac{2}{r} \int_{\frac{r}{2}}^r dt\int_{B_t(x_0)} 
\vartheta(x)|\nabla u(x)|^2\,\d x
\end{align}
(cf. \cite[Lemma 2.9]{FoSp17}). Moreover, recalling that $\vartheta\in\Lip(B_1)$ with 
$\vartheta(x_0)=1$ as $x_0\in\Gamma(u)$ and $0<\vartheta(x)\leq 1$ for all $x\in B_1'$, 
we conclude from \eqref{e:rappr H} that 
\begin{align*}
&\left|H_u(x_0,r) - 2\int_{\frac{r}{2}}^r \frac{dt}{t}
\int_{\de B_t(x_0)} |u(x)|^2\d \cH^{n}(x)\right|
\leq 2L\int_{\frac{r}{2}}^r dt\int_{\de B_t(x_0)} |u(x)|^2\d \cH^{n}(x)\notag\\
&=2L\int_{B_r(x_0)\setminus B_{\sfrac r2}(x_0)}|u(x)|^2\,\d x 
\stackrel{\eqref{e:L2 vs H}}{\leq}
2L\sqrt{1+L^2}\,e^{C_{\ref{p:hprime}}r}r\,H_u(x_0,r).
\end{align*}
Hence, we find that 
\begin{equation}\label{e:stima H}
 H_u(x_0,r) \leq 4\int_{\frac{r}{2}}^r \frac{dt}{t}
\int_{\de B_t(x_0)} |u(x)|^2\d \cH^{n}(x)
\end{equation}
provided that $0<r\leq r_{\ref{l:freq lower}}\leq
(4L\sqrt{1+L^2}\,e^{C_{\ref{p:hprime}}})^{-1}$.

Analogously, by taking into account \eqref{e:rappr D} we have that
\begin{align*}
&\left|D_u(x_0,r) - \frac2r\int_{\frac{r}{2}}^r dt
\int_{B_t(x_0)} |\nabla u(x)|^2\d \cH^{n}(x)\right|
\leq 2L\int_{\frac{r}{2}}^r dt\int_{B_t(x_0)} |\nabla u(x)|^2\d \cH^{n}(x),
\end{align*}
from which we deduce that 
\begin{equation}\label{e:stima D}
\frac2r\int_{\frac{r}{2}}^r dt\int_{B_t(x_0)} |\nabla u(x)|^2\d \cH^{n}(x)
\leq 4D_u(x_0,r)
\end{equation}
as $0<r\leq r_{\ref{l:freq lower}}\leq\frac 1{2L}$.

In particular, from the Poincar\'e inequality \eqref{e:Poinc2} and estimates 
\eqref{e:stima H}, \eqref{e:stima D} we get for some constant $C=C(n,[\nabla u]_{C^{0,\sfrac12}(B_{\sfrac34}^+)})>0$
\[
H_u(x_0,r) \leq C\,r D_u(x_0,r) + C\, r^{n+3}\,,
\]
for all $r\in(0,r)$ provided that $r_{\ref{l:freq lower}}\leq 
(4L\sqrt{1+L^2}\,e^{C_{\ref{p:hprime}}})^{-1}\wedge\sfrac12$.
Then, either $e^{C_{\ref{p:monotonia+lower}}\,r}I_u(x_0,r)\geq 1$ for every 
$r\in (0, r_{\ref{l:freq lower}})$, from which we infer 
$I_u(x_0,r)\geq e^{-C_{\ref{p:monotonia+lower}}}$ for all $r\in (0, r_{\ref{l:freq lower}})$; 
or $e^{C_{\ref{p:monotonia+lower}}\,r}I_u(x_0,r)<1$ for all $r\in (0,r_{x_0})$, 
$r_{x_0}< r_{\ref{l:freq lower}}$, by Proposition~\ref{p:monotonia+lower}.
In the last instance, $I_u(x_0,r)\geq e^{-C_{\ref{p:monotonia+lower}}}$ 
for all $r\in [r_{x_0}, r_{\ref{l:freq lower}})$, and $I_u(x_0,r)<1$ for every $r\in (0,r_{x_0})$. 
Thus, 
we have that $H_u(x_0,r)\geq e^{-C_{\ref{p:hprime}}} H_u(x_0,r_{x_0})\,\Big(\frac r{r_{x_0}}\Big)^{n+1}$ 
for all radii $r\in(0,r_{x_0})$ (cf. \eqref{e:H1} in the appendix). In particular, for such radii 
we conclude that
\[
I_u(x_0,r)\geq \frac1C-e^{C_{\ref{p:hprime}}}\frac{r_{x_0}^{n+1}}{H_u(x_0,r_{x_0})}\,r^2\,,
\]
and thus there exists $\rho_{x_0}\leq r_{x_0}$ such that
\[
I_u(x_0,r)\geq \frac1{2C}
\]
for all $r\in(0,\rho_{x_0})$. In turn, this and the quasi-monotonicity of the frequency 
in Proposition~\ref{p:monotonia+lower} yield that for all $r\in(0, r_{\ref{l:freq lower}})$
\[
I_u(x_0,r)\geq \frac{e^{-C_{\ref{p:monotonia+lower}}}}{2C}\,.\qedhere
\]
\end{proof}

\subsection{Blowup profiles}
An important consequence of the quasi-monotonicity of the frequency
in Proposition~\ref{p:monotonia+lower} and of the universal lower bound
for the frequency in Lemma~\ref{l:freq lower} is the existence of 
nontrivial blowup profiles.
For $u:B_1\to\R$ solution of \eqref{e:ob prob} we introduce the rescalings
\begin{equation}\label{e:rescaling0}
u_{x_0,r} (y) := 
\frac{r^{\sfrac{n}{2}}\,u(x_0+ry)}{H^{\sfrac12}(x_0,r)}
\quad \forall \; r \in(0,1 - |x_0|), \;\forall\; y \in 
B_{\frac{1-|x_0|}{r}}.
\end{equation}
By the same arguments exploited in the blowup analysis in \cite[Section~2.5]{FoSp17},
for every $x_0 \in \Gamma(u)$ and for every sequence
of numbers $(r_j)_{j\in\N} \subset (0,1-|x_0|)$ with
$r_j \downarrow 0$, there exist a subsequence
$(r_{j_k})_{k\in \N}$ and a function $u_0 \in W^{1,2}_{\loc}(\R^{n+1})$ such that
\begin{equation}\label{e:blow-up}
u_{x_0,r_{j_k}} \to u_0 \quad \text{in }\;W^{1,2}_{\loc}(\R^{n+1}).
\end{equation}
Moreover, $u_0$ is the solution to the Signorini problem for the Dirichlet energy  
on $\R^{n+1}$, \ie ~satisfying
\begin{gather}\label{e:Dirichlet bc}
\begin{cases}
\triangle u_0 = 0 & \text{in }\; \{x_{n+1}>0\},\\
\de_{n+1} u_0 \leq 0 \quad\text{and}\quad
u_0\,\de_{n+1} u_0 = 0 & \text{on }\; \{x_{n+1}=0\},
\end{cases}
\end{gather}
and $u_0$ is $I_u(x_0,0^+)$-homogeneous, 
because by rescaling $I_{u_0}(\underline{0},r) = I_u(x_0,0^+)$ for every $r>0$.

In particular, the classification of the blowup profiles 
is the same as for the Dirichlet energy, and consists in the functions
$\Phi_m$, $\Psi_m$, $\Pi_m$ in \eqref{e:polyn b}, \eqref{e:hy-geo1b}
and \eqref{e:PI} in case the subspace of invariant directions has maximal 
dimension.

\subsection{Spatial oscillation for the frequency}
Next we recall the basic estimate on the spatial 
oscillation of the frequency which is at the heart of the analysis
in \cite{FoSp17}.
We introduce the notation: for a point 
$x \in B_1'$ and a radius $0<\rho<r$, we set
\[
\Delta^r_{\rho}(x) := I_u(x,r)+C_{\ref{c:monotonia add}}r
- I_u(x,\rho)-C_{\ref{c:monotonia add}}\rho.
\]
Note that $\Delta^r_{\rho}(x)\geq 0$ in view of Corollary~\ref{c:monotonia add}.

The following proposition is a straightforward extension of
\cite[Proposition~3.3]{FoSp17}.

\begin{proposition}\label{p:D_x frequency}
For every $A>0$ there exists 
$C_{\ref{p:D_x frequency}}(n,\Lip(u),A)>0$ such that,
if $\rho>0$, $R>9$
and $u : B_{4R\rho}(x_0) \to \R$ is a solution to the thin
obstacle problem \eqref{e:ob prob} 
in $B_{4R\rho}(x_0)$, with $x_0\in\Gamma(u)$ and 
$I_u(x_0,4R\rho) \leq A$, then
\begin{equation}\label{e:D_x frequency}
\big\vert I_u\big(x_1, R\rho\big) - I_u\big(x_2, R\rho\big)\big\vert
\leq C_{\ref{p:D_x frequency}} \,
\left[\Big(\Delta^{2(R+2)\rho}_{\sfrac{(R-4) \rho}{2}}(x_1)\Big)^{\sfrac{1}{2}} 
+ 
\Big(\Delta^{2(R+2)\rho}_{\sfrac{(R-4)\rho}{2}}(x_2)\Big)^{\sfrac{1}{2}}\right]+C_{\ref{p:D_x frequency}}R\rho ,
\end{equation}
for every $x_1, x_2 \in B'_{\rho}$.
\end{proposition}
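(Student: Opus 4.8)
The plan is to reproduce, in the present nonlinear framework, the argument of \cite[Proposition~3.3]{FoSp17}; the only substantial difference is that here the coefficient $\vartheta$ is merely Lipschitz (Lemma~\ref{l:thetalip}) instead of constant, so the monotonicity formula of Proposition~\ref{p:monotonia+lower} is inexact and produces error terms of size $O(t)$ at scale $t$, which is precisely the origin of the additive $C_{\ref{p:D_x frequency}}R\rho$ on the right-hand side of \eqref{e:D_x frequency}.

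First I would record that the monotonicity defect controls the radial non-homogeneity of $u$. Integrating \eqref{e:monotonia freq} and using that $I_u(x,t)\le e^{C_{\ref{p:monotonia+lower}}}A$ for $t$ in the relevant range, one obtains for $0<\rho'<r'<1-|x|$
\[
\int_{\rho'}^{r'} \frac{2\,t}{H_u^2(x,t)}\Big(H_u(x,t)\,E_u(x,t) - D_u^2(x,t)\Big)\,\d t \;\le\; C\,\Delta^{r'}_{\rho'}(x) + C\,r'.
\]
By the Cauchy--Schwarz inequality applied to the representation \eqref{e:D} of $D_u$ with respect to the nonnegative weight $-\phi'(|x-y|/t)\,\vartheta(y)\,\d y$, the integrand $H_uE_u-D_u^2\ge 0$ is a variance-type quantity measuring the failure of $u$ to be radially homogeneous about $x$ at scale $t$; hence the displayed bound controls, up to dilations, the $W^{1,2}$-distance between the rescalings $u_{x,\rho'}$ and $u_{x,r'}$ of \eqref{e:rescaling0} by $C(\Delta^{r'}_{\rho'}(x))^{\sfrac12}+C(r')^{\sfrac12}$.

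Next I would differentiate $D_u(\cdot,r)$ and $H_u(\cdot,r)$ in the base point: since the $x$-dependence enters only through $\phi(|x-y|/r)$ and through $\vartheta$, the same integration-by-parts identities that yield \eqref{e:Dprime}, \eqref{e:Hprime}, \eqref{e:D} --- now carried out in the base point --- produce expressions for $\nabla_x\log D_u(x,r)$ and $\nabla_x\log H_u(x,r)$ whose leading $O(\tfrac1r)$ terms coincide, the difference being bounded via Cauchy--Schwarz by $\tfrac{C}{r}(H_uE_u-D_u^2)^{\sfrac12}H_u^{-1}$, while the contribution of $\nabla\vartheta$ is $O(1)$ by Lemma~\ref{l:thetalip}. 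This gives the Naber--Valtorta-type pointwise bound
\[
\big|\nabla_x I_u(x,r)\big| \;\le\; \frac{C}{r}\,\frac{\big(H_u(x,r)\,E_u(x,r)-D_u^2(x,r)\big)^{\sfrac12}}{H_u(x,r)}\,I_u(x,r) \;+\; C\,I_u(x,r).
\]
Integrating this along the segment from $x_1$ to $x_2$ (of length $\le 2\rho\le \tfrac{Cr}{R}$) for $r$ in the range $\tfrac{(R-4)\rho}{2}\le r\le 2(R+2)\rho$, using a covering argument to compare the monotonicity defect along the segment with those at the endpoints, and applying Cauchy--Schwarz in the radial variable together with the first step, one arrives at
\[
\big|I_u(x_1,r)-I_u(x_2,r)\big|\;\le\; C\Big[\big(\Delta^{2(R+2)\rho}_{\sfrac{(R-4)\rho}{2}}(x_1)\big)^{\sfrac12}+\big(\Delta^{2(R+2)\rho}_{\sfrac{(R-4)\rho}{2}}(x_2)\big)^{\sfrac12}\Big]+C\,r,
\]
which for $r=R\rho$ is exactly \eqref{e:D_x frequency}.

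The main obstacle is the cancellation in the previous step: one must show that the two $O(\tfrac1r)$ leading terms in $\nabla_x\log D_u$ and $\nabla_x\log H_u$ cancel, leaving a remainder controllable by the square root of the monotonicity defect --- this amounts to re-deriving the appendix identities differentiating in the base point rather than in the radius, and to a careful accounting of the $O(1)$ remainders coming from $\nabla\vartheta$ and from the inexactness of the monotonicity formula, which together account for the $C_{\ref{p:D_x frequency}}R\rho$ term (absent in the purely quadratic case of \cite{FoSp17}) and for the $O(\rho)$ contribution of the shift $|x_1-x_2|$. As an alternative, one could argue by blow-up and compactness as in \cite[Section~2.5]{FoSp17}, using \eqref{e:blow-up} and the homogeneity of the blow-up limits; this, however, would not produce the explicit dependence $C_{\ref{p:D_x frequency}}(n,\Lip(u),A)$ of the constant.
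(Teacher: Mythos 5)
Your plan follows essentially the same route as the paper: compute $\partial_e I_u(x_t,R\rho)$ along the segment $x_t=tx_1+(1-t)x_2$, isolate a leading variance-type term controlled by the monotonicity defect, and collect the $O(1)$ error from $\nabla\vartheta$ into the additive $C_{\ref{p:D_x frequency}}R\rho$ term. The one place where you are more vague than the actual argument is the cancellation mechanism you call the ``main obstacle'': in the paper this is achieved concretely by inserting a free scalar $\lambda$ into the formulas \eqref{e:dH} and \eqref{e:dD} (replacing $\partial_e u$ by $\partial_e u-\lambda u$ at no cost, using the radial identity \eqref{e:D}), so that after adding $\tfrac{\partial_e D_u}{D_u}-\tfrac{\partial_e H_u}{H_u}$ the $\lambda$-terms cancel and the remaining integrand is $(\partial_e u-\lambda u)\big(\nabla u\cdot(z-x_t)-I_u u\big)$, which Cauchy--Schwarz turns into the $\sqrt{H_uE_u-D_u^2}$ factor; your phrase ``re-deriving the appendix identities differentiating in the base point'' gestures at this but does not identify the actual device, and the pointwise bound you write for $\nabla_x I_u$ is only the shape of the intermediate estimate rather than what one proves. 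These are imprecisions of exposition rather than errors, and the rest of your sketch (the role of the Lipschitz coefficient, the range of radii $\sfrac{(R-4)\rho}{2}$ to $2(R+2)\rho$ needed to pass from $x_t$ to the endpoints, the Cauchy--Schwarz in the radial variable) matches the paper's proof.
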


\begin{proof}
The proof is a variant of \cite[Proposition~3.3]{FoSp17}.
For readers' convenience, we repeat some of the arguments with
the necessary changes.

Without loss of generality, we consider $x_0= \underline{0}$.
With fixed $x_1, x_2 \in B'_{\rho}$, let 
$x_t:=tx_1+(1-t)x_2$, $t\in[0,1]$, and 
consider the map $t\mapsto I_u(x_t,R \rho)$.
Set $e:=x_1-x_2$, then $e \cdot e_{n+1} = 0$.
Since the functions $x\mapsto H_u(x,R \rho)$ 
and $x\mapsto D_u(x,R \rho)$ are differentiable, 
we get
\begin{equation}\label{e:Iint}
I_u(x_1,R \rho) - I_u(x_2,R \rho)
=\int_0^1\partial_e I_u(x_t,R\rho)\,\d t.
\end{equation}
To compute the last integrand, we start off with noting that 
for all $\lambda\in\R$
\begin{align}\label{e:dH}
\de_e H_u(x_t,R\rho) = &-  \, \int \phi'\big(\textstyle{\frac{|y|}{R\rho}}\big)\,\frac{1}{|y|} 
\big(2\vartheta(y+x_t)\,u(y+x_t)\, \de_e u(y+x_t)+\de_e \vartheta(y+x_t)\,u^2(y+x_t)\big)\,\d y\notag\\
=&- 2\int \phi'\big(\textstyle{\frac{|y|}{R\rho}}\big)\,\frac{1}{|y|} 
\vartheta(y+x_t)\,u(y+x_t)\big(\de_e u(y+x_t)-\lambda u(y+x_t)\big)\,\d y\notag\\
&+2\lambda H_u(x_t,R\rho)-\int \phi'\big(\textstyle{\frac{|y|}{R\rho}}\big)\,\de_e \vartheta(y+x_t)\,\frac{u^2(y+x_t)}{|y|}
\,\d y,
\end{align}
and by Proposition~\ref{p.H2}
\begin{align}\label{e:dD}
 \de_e D_u(x_t,R\rho) & = 
 \int \phi\big(\textstyle{\frac{|y|}{R\rho}}\big)\big(2\vartheta(y+x_t)
\nabla u(y+x_t)  \cdot \nabla (\de_e u)(y+x)\, +\de_e\vartheta(y+x_t)|\nabla u(y+x_t) |^2\big)
\,\d y\notag\\
& = - \frac{2}{R\rho}\int \phi'\big(\textstyle{\frac{|y|}{R\rho}}\big)\,
\vartheta(y+x_t)\de_e u(y+x_t)\, \nabla u(y+x_t) \cdot\frac{y}{|y|}\,\d y\notag\\
& + \int \phi\big(\textstyle{\frac{|y|}{R\rho}}\big)\de_e\vartheta(y+x_t)|\nabla u(y+x_t) |^2
\,\d y\notag\\
& \stackrel{\eqref{e:D}}{=} - \frac{2}{R\rho}
 \int \phi'\big(\textstyle{\frac{|y|}{R\rho}}\big)\,
\vartheta(y+x_t)\big(\de_e u(y+x_t)-\lambda u(y+x_t)\big)
 \nabla u(y+x_t) \cdot\frac{y}{|y|}\,\d y\notag\\
& +2\lambda D_u(x_t,R\rho)
+\int \phi\big(\textstyle{\frac{|y|}{R\rho}}\big)
\de_e\vartheta(y+x_t)|\nabla u(y+x_t) |^2\,\d y.
\end{align}
To deduce the second equality we have applied the 
divergence theorem to the vector field 
$V(y):= \phi\big(\textstyle{\frac{|y|}{R\rho}}\big)\,
\vartheta(x+y)\,\de_e u(y+x)\, \nabla u(y+x)$
(note that $V \in C^\infty(B_{R\rho}\setminus B_{R\rho}')$, 
$V$ has zero trace on $\partial B_{R\rho}$ and the 
divergence of $V$ does not concentrate on $B_1'$).

Then, by formulas \eqref{e:dH} and \eqref{e:dD}, 
we have that
\begin{align}\label{e:dI}
 \partial_e & I_u(x_t,R \rho)  = I_u(x_t,R \rho)
 \left(\frac{\partial_eD_u(x_t,R \rho)}{D_u(x_t,R \rho)}-
 \frac{\partial_eH_u(x_t,R \rho)}{H_u(x_t,R \rho)}\right)\notag\\
&=\textstyle{\frac{2}{H_u(x_t,R \rho)}}
 \int \phi'\big(\textstyle{\frac{|z-x_t|}{R\rho}}\big)\,\frac{\vartheta(z)}{|z-x_t|}
\big(\de_e u(z)-\lambda u(z)\big)
\big(\nabla u(z) \cdot (z-x_t)- I_u(x_t,R \rho) u(z)\big)\,\d z\notag\\
&+\textstyle{\frac{I_u(x_t,R \rho) }{D_u(x_t,R\rho)}}
\int \phi\big(\textstyle{\frac{|z-x_t|}{R\rho}}\big)
\de_e\vartheta(z)|\nabla u(z)|^2\,\d z
-\textstyle{\frac{I_u(x_t,R \rho) }{H_u(x_t,R\rho)}}
\int \phi'\big(\textstyle{\frac{|z-x_t|}{R\rho}}\big)\,\de_e \vartheta(z)\frac{u^2(z)}{|z-x_t|}\,\d z\notag\\
&=: J_t^{(1)}+J_t^{(2)}+J_t^{(3)}.
\end{align}
The estimate of $J_t^{(1)}$ is at all analogous to the estimate
in \cite[Proposition~3.3]{FoSp17} and yields
\begin{align}\label{e:J1t}
J_t^{(1)} \leq&
C\big(\triangle_{\sfrac{R \rho}2-2\rho}^{2(R+2)\rho}(x_1)\big)^{\sfrac 12}
+C \big(\triangle_{\sfrac{R \rho}2-2\rho}^{2(R+2)\rho}(x_2)\big)^{\sfrac 12}
\end{align}
Recalling that $\vartheta$ is 
Lipschitz continuous, for $J_t^{(2)}$ and $J_t^{(3)}$
we get that there exists a constant $C=C(\Lip(u),A)>0$ such that
\begin{equation}\label{e:J2t}
|J_t^{(2)}+J_t^{(3)}|\leq C|x_1-x_2|.
\end{equation}
By collecting \eqref{e:Iint}, \eqref{e:dI}, \eqref{e:J2t} and \eqref{e:J1t} 
we conclude.
\end{proof}

\subsection{Proof of Theorem~\ref{t:main precise}}\label{s:misura}
For the proof of the main theorem is now a straightforward
adaptation of the arguments in \cite{FoSp17}.
We omit it the details and only recall the main steps of the proof.

\subsubsection{Mean-flatness}\label{s:mean-flatness}
Using the estimate on the spatial oscillation of the frequency
in Proposition \ref{p:D_x frequency}, one can easily
prove the analog of \cite[Proposition~4.2]{FoSp17}:
for every $A>0$ and $R>6$ there exists a constant 
$C>0$ such that if $u$ is a solution to
\eqref{e:ob prob} in $B_{(4R+10)r}(x_0)$, 
with $x_0 \in \Gamma(u)$ and with 
$I_u(x_0,(4R+10)r) \leq A$, then for every 
$\mu$ finite Borel measure with $\spt(\mu)\subseteq\Gamma(u)$ and
for all $p \in \Gamma(u)\cap B_{r}'(x_0)$ we have
\begin{equation}\label{e:mean-flatness vs freq}
\beta_{\mu}^2 (p,r) \leq 
\frac{C}{r^{n-1}}
\Big(\int_{B_{r}(p)} 
\Delta_{(R-5)\,\sfrac{r}{2}}^{(2R+4)\,r}(x)\,\d\mu(x)+
r^{2}\mu(B_{r}(p))\Big),
\end{equation}
where the \emph{mean flatness} of $\mu$ is defined by 
%
\begin{equation}\label{e:beta}
\beta_\mu(x,r) := \inf_{\cL} \Big(
r^{-n-1} \int_{B_r(x)} \dist^2(y,\cL)\d\mu(y)\Big)^{\sfrac{1}{2}},
\end{equation}
the infimum being taken among all affine $(n-1)$-dimensional planes $\cL \subset \R^{n+1}$.

\subsubsection{Rigidity of homogeneous solutions}\label{s:rigidity}
We set for $x_0\in B_1'$ and $t<1-|x_0|$
\[
J_u (x_0,t) := e^{C_{\ref{p:monotonia+lower}}t}I_u(x_0,t)
\]
and given $\eta, r>0$, $4r<1-|x_0|$, we say that a
solution $u:B_{4r}(x_0)\to\R$, $x_0\in\{x_{n+1}=0\}$, 
to the thin obstacle problem \eqref{e:ob prob} is
\textit{$\eta$-almost homogeneous in $B_{4r}(x_0)$} if
\[
J_u(x_0,r) - J_u\big(x_0,\sfrac{r}2\big)\leq \eta.
\]
Then, by the compactness argument in
\cite[Proposition~5.6]{FoSp17}, the following 
rigidity property holds:
for every $\tau,A>0$ there exist $\eta>0$ and 
$r_0>0$ such that, 
if $r<r_0$ and $u:B_{4r}(x_0)\to\R$, with $x_0\in\{x_{n+1}=0\}$, 
is a $\eta$-almost homogeneous solution 
in $B_{4r}(x_0)$ of the thin obstacle problem
\eqref{e:ob prob} with $x_0\in \Gamma(u)$ and 
$J_u(x_0,4r)\leq A$, then
\begin{itemize}
\item[(i)] either for every point $x\in \Gamma(u)\cap
B_{2r}(x_0)$ we have 
\begin{align}\label{e:rigidity1}
\left\vert J_{u}(x,r) - J_{u}(x_0,r)\right\vert\leq\tau,
\end{align}
\item[(ii)] or there exists a linear subspace $V\subset\R^{n}\times\{0\}$
of dimension $n-2$ such that
\begin{align}\label{e:rigidity2}
\begin{cases}
y\in \Gamma(u)\cap B_{2r}(x_0),\\
J_{u}(y,r) - J_{u}(y,\sfrac{r}2)\leq \eta
\end{cases}
\quad\Longrightarrow\quad \dist(y,V)<\tau r.
\end{align}
\end{itemize}

\subsubsection{Proof of Theorem~\ref{t:main precise}}
Finally, the main results can be obtained by following
verbatim \cite[Sections~6--8]{FoSp17} (see also \cite{FoSp17bis}). Indeed, 
\cite[Proposition~6.1]{FoSp17}, that leads to the local finiteness 
of the Minkowskii content in item (i) of Theorem~\ref{t:main precise},
is based on a covering argument that exploits 
the lower bound on the frequency in  Lemma~\ref{l:freq lower}, 
the rigidity of almost homogeneous solutions in Subsection~\ref{s:rigidity},
the control of the mean oscillation via the frequency in 
Subsection \ref{s:mean-flatness} and the discrete 
Reifenberg theorem by Naber \& Valtorta \cite[Theorem~3.4]{NaVa1}.
 
Similarly, the $\cH^{n-1}$-rectifiability of $\Gamma(u)$ in 
Theorem~\ref{t:main precise}  (ii)
is a consequence of the rectifiability criterion by
Azzam \& Tolsa \cite[Theorem~1.1]{AzTo15} and Naber \& Valtorta
\cite[Theorem~3.4]{NaVa1} together with the estimate in
Subsection~\ref{s:mean-flatness} and 
item (i) of Theorem~\ref{t:main precise} itself.

The $C^{1,\alpha}$-regularity of $\Gamma_{\sfrac 32}(u)$
follows from the approach via an epiperimetric inequality
\cite{GaSm14} being $\vartheta$ Lipschitz continuous (see also
\cite{FoSp16} for the proof of the epiperimetric inequality).

Finally, the classification of blow-up limits 
is exactly that stated in \cite[Theorem~1.3]{FoSp17}, and proved in 
\cite[Section~8]{FoSp17} (see also \cite{FoSp17bis}).

\appendix
\section{Variation formulas}
In this section we show the computations for the monotonicity 
of the frequency based on the integration formulas exploited in 
\cite{FoGeSp15} for the classical obstacle problem.


\begin{proposition}\label{p:eprime}
Let $u$ be a solution to the thin obstacle problem 
\eqref{e:ob prob} in $B_1$.
There exists a non negative constant $C_{\ref{p:eprime}}$ depending on $\Lip(u)$, 
such that for every $x_0\in B_1'$ and for $\cL^1$ a.e. $r\in(0,1-|x_0|)$,
\begin{align}\label{e:eprime-A}
D_u^\prime(x_0,r) = {} &
\frac{n-1}r D_u(x_0,r)+ 2E_u(x_0,r)+\eps_D(x_0,r),
\end{align} 
with $|\eps_D(x_0,r)| \leq C_{\ref{p:eprime}}\,D_u(x_0,r)$.

Moreover, for all  $0<r<1-|x_0|$,
\begin{equation}\label{e:D-A}
D_u(x_0,r)=-\textstyle{\frac1r}
\int \phi'\big(\textstyle{\frac{|x-x_0|}{r}}\big)\,\vartheta(x)u(x)
\nabla u(x)\cdot\frac{x-x_0}{|x-x_0|}\,\d x.
\end{equation}
\end{proposition}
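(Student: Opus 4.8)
The plan is to derive both identities from the weak Euler--Lagrange formulation of the quadratic problem \eqref{e:Aenrg}, namely from $\div(\vartheta\nabla u)=0$ in $B_1^+$ together with the Signorini conditions $\vartheta\,\de_{n+1}u\leq 0$ and $u\,\de_{n+1}u=0$ on $B_1'$ (which follow from \eqref{e.signorini bc} since $\vartheta>0$), and then to double the resulting integral identities by the even symmetry across $\{x_{n+1}=0\}$. Throughout one may assume $x_0=\underline 0$, and the key point that makes the computations legitimate is that, by Proposition~\ref{p.H2}, $u\in W^{2,2}(B_s^+)$ for every $B_s^+\subset\subset B_1^+\cup B_1'$, so $\nabla u$ has an $L^2$ trace on $B_1'$; moreover $\vartheta$ is even and Lipschitz by Lemma~\ref{l:thetalip}, whence $|\nabla u|^2\in W^{1,1}(B_s^+)$ with $\nabla(|\nabla u|^2)\in L^1$ by Cauchy--Schwarz.

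For \eqref{e:D-A} I would test the equation with $\psi:=\phi\big(\tfrac{|x|}{r}\big)\,u$, which is admissible, being compactly supported away from $(\de B_1)^+$. Integrating by parts on $B_1^+$, the boundary term on $B_1'$ is $-\int_{B_1'}\vartheta\,\de_{n+1}u\,\phi\big(\tfrac{|x'|}{r}\big)u\,\d x'=0$ by the complementarity condition $u\,\de_{n+1}u=0$; expanding $\nabla\psi=\phi\big(\tfrac{|x|}{r}\big)\nabla u+\tfrac1r\,u\,\phi'\big(\tfrac{|x|}{r}\big)\tfrac{x}{|x|}$ and using that all integrands involved are even in $x_{n+1}$ (so that $\int_{B_1^+}=\tfrac12\int_{B_1}$) yields \eqref{e:D-A} at once.

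For \eqref{e:eprime-A} the idea is to use the Pohozaev-type domain-variation identity obtained by testing the equation with $\psi:=\nabla u\cdot X$, where $X:=\phi\big(\tfrac{|x|}{r}\big)\,x$; note $X_{n+1}=0$ on $B_1'$ and $X\equiv0$ near $\de B_1$, and $\psi\in W^{1,2}(B_s^+)$ by Proposition~\ref{p.H2}. The boundary term on $B_1'$ is then $-\int_{B_1'}\vartheta\,\de_{n+1}u\,(\nabla' u\cdot X')\,\d x'$, and it vanishes since $\nabla'u=0$ $\cL^n$-a.e.\ on $\Lambda(u)$ (as the trace $u|_{B_1'}\in W^{1,2}(B_1')$ vanishes there) while $\de_{n+1}u=0$ on $B_1'\setminus\Lambda(u)$. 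One then rewrites $\vartheta\,\nabla u\cdot\nabla(\nabla u\cdot X)=\tfrac12\vartheta\,X\cdot\nabla(|\nabla u|^2)+\vartheta\,\nabla u\cdot\nabla X\cdot\nabla u$, integrates the first summand by parts (the boundary contribution vanishes because $X\cdot\nu=-X_{n+1}=0$ on $B_1'$), producing $-\tfrac12\int|\nabla u|^2\div(\vartheta X)=-\tfrac12\int|\nabla u|^2\big(\vartheta\,\div X+\nabla\vartheta\cdot X\big)$, and inserts $\div X=(n+1)\phi\big(\tfrac{|x|}{r}\big)+\tfrac{|x|}{r}\phi'\big(\tfrac{|x|}{r}\big)$ and $\nabla u\cdot\nabla X\cdot\nabla u=\phi\big(\tfrac{|x|}{r}\big)|\nabla u|^2+\tfrac{|x|}{r}\phi'\big(\tfrac{|x|}{r}\big)\big(\nabla u\cdot\tfrac{x}{|x|}\big)^2$. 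Using the elementary identities $\int\vartheta|\nabla u|^2\tfrac{|x|}{r}\phi'\big(\tfrac{|x|}{r}\big)\,\d x=-r\,D_u'(r)$ and $\int\vartheta\,\tfrac{|x|}{r}\phi'\big(\tfrac{|x|}{r}\big)\big(\nabla u\cdot\tfrac{x}{|x|}\big)^2\,\d x=-r\,E_u(r)$, collecting terms gives $D_u'(r)=\tfrac{n-1}{r}D_u(r)+2E_u(r)+\eps_D(r)$ with $\eps_D(r)=\tfrac1r\int\phi\big(\tfrac{|x|}{r}\big)|\nabla u|^2\,\nabla\vartheta\cdot x\,\d x$; then $|\eps_D(r)|\leq C_{\ref{p:eprime}}D_u(r)$ follows from $|\nabla\vartheta\cdot x|\leq\|\nabla\vartheta\|_{L^\infty}|x|\leq\|\nabla\vartheta\|_{L^\infty}r$ on $\spt\phi\big(\tfrac{|\cdot|}{r}\big)$ together with the lower bound $\vartheta\geq(1+L^2)^{-\sfrac12}$ in \eqref{e:Acoercive}, with $C_{\ref{p:eprime}}$ controlled by $\Lip(u)$ via Lemma~\ref{l:thetalip}.

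The routine parts are the verification that every integrand appearing above is even in $x_{n+1}$ and the differentiation under the integral sign giving $D_u'(r)=-\int\phi'\big(\tfrac{|x|}{r}\big)\tfrac{|x|}{r^2}\vartheta|\nabla u|^2\,\d x$ for $\cL^1$-a.e.\ $r$, which is justified by dominated convergence since $\phi$ is Lipschitz with $\phi'$ piecewise constant. The main obstacle is making the two integrations by parts rigorous in the presence of the merely Lipschitz coefficient $\vartheta$ and of a solution $u$ that is only $W^{2,2}$ up to $B_1'$ — in particular the manipulation of $\nabla u\cdot\nabla(\nabla u\cdot X)$ and the identification of the boundary integrals on $B_1'$ — which is precisely where Proposition~\ref{p.H2} and the one-sided continuity of $\nabla u$ in Proposition~\ref{p.C1} enter, along the lines of the integration-by-parts scheme of \cite{FoGeSp15}.
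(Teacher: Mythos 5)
Your proof is correct and follows essentially the same route as the paper's: you test the weak form of $\div(\vartheta\nabla u)=0$ with $\psi=\phi\big(\tfrac{|x|}{r}\big)u$ and $\psi=\nabla u\cdot X$, use the Signorini complementarity conditions together with $\nabla'u=0$ $\cL^n$-a.e.\ on $\Lambda(u)$ to kill the $B_1'$ boundary terms, and absorb the extra $\nabla\vartheta$ term into $\eps_D$ using Lemma~\ref{l:thetalip} and \eqref{e:Acoercive}. The only cosmetic difference is that the paper packages the two integrations by parts into a single application of the divergence theorem to the vector fields $V=\phi\vartheta u\nabla u$ and $W=\tfrac12\vartheta|\nabla u|^2 X-\vartheta(\nabla u\cdot X)\nabla u$, which is exactly what your ``test, rewrite, integrate by parts once more'' produces; your $\eps_D(r)=\tfrac1r\int\phi\big(\tfrac{|x|}{r}\big)|\nabla u|^2\,(\nabla\vartheta\cdot x)\,\d x$ coincides with the paper's error term.
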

\begin{proof}
Without loss of generality we may assume $x_0=\underline{0}$.
By direct differentiation we have
\begin{equation}\label{e:Dprime-A}
D_u'(r)=-\int \phi'\Big(\textstyle{\frac{|x|}{r}}\Big)
\frac{|x|}{r^2}\vartheta(x)\,|\nabla u(x)|^2\,\d x.
\end{equation}
Consider the vector field $W \in C^\infty(B_r\setminus B_r',\R^{n+1})$ defined by
\[
W(x) := \phi\Big(\textstyle{\frac{|x|}{r}}\Big)\vartheta(x)
\left(\frac{|\nabla u|^2}{2}x-(\nabla u\cdot x)\nabla u\right),
\]
and note that $W\in C^{0,\sfrac12}_{\mathrm{loc}}\cap W^{1,2}_{\mathrm{loc}}(B_1^\pm\cup B_1',\R^{n+1})$
by the regularity of $u$ (cf. Proposition~\ref{p.H2}, Theorem~\ref{t.opt reg} and Lemma~\ref{l:thetalip}).
Then, the distributional divergence of $W$ is a measure that might 
have a singular part concentrated on $B_r'$ by the trace theorem in $W^{1,2}$. On the other hand, 
recalling that $u\,\partial_{n+1}u=0$ on $B_1'$ we find
$W (x',0^\pm) \cdot e_{n+1} = 0$ for all $(x',0) \in B_r'$.
Therefore, since $u$ minimizes \eqref{e:Aenrg}, the distributional divergence of $W$ is the $L^1(B_r)$ 
function given by
\begin{align*}
\div\, W (x) & =  
\phi'\big(\textstyle{\frac{|x|}{r}}\big)\cdot
\,\frac{x}{r\,|x|} \vartheta(x)
\Big(\frac{|\nabla u|^2}{2}x-(\nabla u\cdot x)\nabla u\Big)
 + \phi\big(\textstyle{\frac{|x|}{r}}\big)
 \Big((n-1)\vartheta(x)+(\nabla \vartheta\cdot x)\Big)\,\frac{|\nabla u(x)|^2}{2}.
\end{align*}
Being $W$ with zero trace on $\partial B_r$ we conclude that
\begin{align}\label{e:D dive}
0=\int\div\,W(x)\,\d x =&\int \phi'\Big(\textstyle{\frac{|x|}{r}}\Big)
\frac{|x|}{2\,r}\vartheta(x)\,|\nabla u(x)|^2\,\d x \notag\\
&+rE_u(r) + \frac{n-1}{2}\,D_u(r)
+\int \phi\big(\textstyle{\frac{|x|}{r}}\big)(\nabla \vartheta\cdot x)\frac{|\nabla u(x)|^2}{2}\d x.
\end{align}
Equation \eqref{e:eprime-A} follows thanks to the equalities \eqref{e:Dprime-A}, \eqref{e:D dive}, 
and the Lipschitz continuity of $\vartheta$ (cf. Lemma~\ref{l:thetalip}).

Next, we establish \eqref{e:D-A} with a similar argument. 
To this aim, consider the vector field
$V(x) := \phi\big(\frac{|x|}{r}\big)\vartheta(x)\,u(x)\,\nabla u(x)$.
Clearly, $V \in C^{\infty}(B_1\setminus B_1',\R^{n+1})$, with
\[
V(x)\cdot e_{n+1}=\phi\big(\textstyle{\frac{|x|}{t}}\big)\,\vartheta(x)\,
u(x)\,\partial_{n+1} u(x).
\]
Note that, $V\in C^{0,\sfrac12}_{\mathrm{loc}}\cap W^{1,2}_{\mathrm{loc}}(B_1^\pm\cup B_1',\R^{n+1})$
by the regularity of $u$, so that $V(x',0)\cdot e_{n+1}=0$ on $B_1'$ recalling that $u\,\partial_{n+1}u=0$ on $B_1'$. 
Thus, by taking into account that $V$ has zero trace on $\partial B_r$ and that 
$u$ minimizes \eqref{e:Aenrg}, the distributional divergence of $V$ is the 
$L^1(B_1)$ function given by
\begin{align*}
\div\, V (x) & = 
\phi'\big(\textstyle{\frac{|x|}{r}}\big)\vartheta(x)\,u(x)
\nabla u(x)\cdot\frac{x}{r\,|x|}\,+
\phi\big(\textstyle{\frac{|x|}{r}}\big)\vartheta(x)|\nabla u(x)|^2.
\end{align*}
In conclusion, \eqref{e:D-A} follows at once from the divergence theorem. 
\end{proof}

Let us now deal with the derivative of $H_u$.

\begin{proposition}\label{p:hprime}
Let $u$ be a solution to the thin obstacle problem 
\eqref{e:ob prob} in $B_1$.
There exists a non negative constant $C_{\ref{p:hprime}}$ depending on $\Lip(u)$ such that for every $x_0\in B_1'$ and for 
$\cL^1$ a.e.~$r\in(0,1-|x_0|)$,
\begin{equation}\label{e:Hprime-A}
H_u^\prime(x_0,r)=\frac nr H_u(x_0,r)
+2D_u(x_0,r)+\eps_H(x_0,r),
\end{equation} 
where $|\eps_H(x_0,r)|\leq C_{\ref{p:hprime}}H_u(x_0,r)$.
\end{proposition}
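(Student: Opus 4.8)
The plan is to differentiate $H_u(x_0,r)$ in $r$ and then identify the three pieces that appear as $\tfrac nr H_u$, $2D_u$ and a Lipschitz remainder, in the same spirit as the proof of Proposition~\ref{p:eprime} for $D_u$. As usual we assume $x_0=\underline{0}$ and omit it from the notation.

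First I would rewrite $H_u$ by the coarea formula: since $\phi'(\,\cdot\,/r)\equiv-2$ exactly on $B_r\setminus B_{r/2}$,
\[
H_u(r)=2\int_{r/2}^r\frac{g(t)}{t}\,\d t,\qquad g(t):=\int_{\de B_t}\vartheta\,u^2\,\d\cH^n .
\]
Since $\vartheta\in W^{1,\infty}(B_1)$ (Lemma~\ref{l:thetalip}) and $u\in C^{1,\sfrac12}_\loc(B_1^+\cup B_1')$ (Theorem~\ref{t.opt reg}), the product $\vartheta u^2$ is Lipschitz, so $t\mapsto g(t)$ is absolutely continuous and, scaling in spherical coordinates, $g'(t)=\tfrac nt g(t)+\int_{\de B_t}\de_\nu(\vartheta u^2)\,\d\cH^n$ for a.e.\ $t$, where $\de_\nu:=\tfrac{x}{|x|}\cdot\nabla$. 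Differentiating the one–dimensional integral then gives, for a.e.\ $r$,
\[
H_u'(r)=\frac2r\big(g(r)-g(r/2)\big)=\frac2r\int_{r/2}^r g'(t)\,\d t
=\frac nr\,H_u(r)+\frac2r\int_{r/2}^r\!\int_{\de B_t}\de_\nu(\vartheta u^2)\,\d\cH^n\,\d t .
\]
Alternatively, one may bypass the scaling identity by applying the divergence theorem to the vector field $x\mapsto\vartheta(x)u^2(x)\tfrac{x}{|x|}$ on $B_r\setminus B_{r/2}$: it is Lipschitz up to $\de B_r$ and across the origin (the latter because $u$ vanishes there to order $\sfrac32$, as in the proof of Lemma~\ref{l:thetalip}), and its normal component vanishes on $B_1'$, so its divergence carries no singular part on the thin space.

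Next I would split $\de_\nu(\vartheta u^2)=2\vartheta\,u\,\de_\nu u+u^2\,\de_\nu\vartheta$. Using the already established formula \eqref{e:D-A} for $D_u$ together with the same coarea identity, the contribution of the first summand is exactly $2D_u(r)$, i.e.
\[
\frac2r\int_{r/2}^r\!\int_{\de B_t}2\vartheta\,u\,\de_\nu u\,\d\cH^n\,\d t=2D_u(r);
\]
hence \eqref{e:Hprime-A} holds with $\eps_H(r):=\tfrac2r\int_{r/2}^r\int_{\de B_t}u^2\,\de_\nu\vartheta\,\d\cH^n\,\d t$. Finally, since $|\de_\nu\vartheta|\le\Lip(\vartheta)$, since $1\le\sqrt{1+L^2}\,\vartheta$ on $B_1$ by \eqref{e:Acoercive}, and since $t\le r$ gives $\tfrac2r\int_{r/2}^r g(t)\,\d t\le2\int_{r/2}^r t^{-1}g(t)\,\d t=H_u(r)$, one obtains $|\eps_H(r)|\le\Lip(\vartheta)\sqrt{1+L^2}\,H_u(r)=:C_{\ref{p:hprime}}\,H_u(r)$, and $C_{\ref{p:hprime}}$ depends only on $\Lip(u)$ via Lemma~\ref{l:thetalip}.

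The step requiring most care is the justification of the differentiation and of the divergence identity near the free boundary $\Gamma(u)$ and on the thin space $B_1'$: this is precisely where the optimal $C^{1,\sfrac12}$ regularity (Theorem~\ref{t.opt reg}) and the Lipschitz bound on $\vartheta$ (Lemma~\ref{l:thetalip}) are used, ensuring both that $\vartheta u^2$ is Lipschitz (so that $g$ is absolutely continuous and the coarea splitting is licit) and that the vector field above has no singular divergence on $B_1'$; everything else is bookkeeping with the coarea formula and the identity \eqref{e:D-A}.
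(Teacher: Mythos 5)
Your argument is correct and is, in substance, the same proof as in the paper: you arrive at the intermediate identity $H_u'(r)=\frac{n}{r}H_u(r)+\frac{2}{r}\int_{r/2}^r\int_{\partial B_t}\partial_\nu(\vartheta u^2)\,\d\cH^n\d t$, split $\partial_\nu(\vartheta u^2)=2\vartheta u\,\partial_\nu u+u^2\,\partial_\nu\vartheta$, identify the first contribution as $2D_u(r)$ via \eqref{e:D-A}, and bound the second by $\Lip(\vartheta)\sqrt{1+L^2}\,H_u(r)$ using \eqref{e:Acoercive}. The only cosmetic difference is that the paper reaches the same identity by the change of variables $x=ry$ (pulling out the factor $r^n$ and differentiating under the integral), whereas you phrase it through the coarea representation \eqref{e:rappr H} and the spherical scaling formula for $g'(t)$; the extra paragraph you add justifying absolute continuity of $g$ via the Lipschitz regularity of $\vartheta u^2$ is sound and not in conflict with the paper's terser ``direct computation.''
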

\begin{proof}
As usual we assume $x_0=\underline{0}$.
Equality \eqref{e:Hprime-A} is a consequence of \eqref{e:D-A} 
and the direct computation
\begin{align*}
H_u'(r) &= \textstyle{\frac{\d}{\d r}} \left(-r^{n} \int \phi'(|y|)\, \vartheta(r\,y)\,
\textstyle{\frac{u^2(r\,y)}{|y|}} \,\d y\right)\\
& = \textstyle{\frac{n}{r}}\,H_u(r) - \,r^{n} \int \phi'(|y|)\,\Big( 
\nabla\vartheta(r\,y){u^2(r\,y)}+
2\vartheta(r\,y) u(r\,y)\,\nabla u(r\,y)\Big)\cdot\textstyle{\frac{y}{|y|}}\,\d y\\
& \stackrel{\eqref{e:D-A}}{=} \textstyle{\frac{n}{r}}\,H_u(r) 
+ 2\, D_u(r)+\eps_H(r).
\end{align*}
where $|\eps_H(r)| \leq C_{\ref{p:hprime}}\,H_u(r)$ in view of the Lipschitz continuity 
of $\vartheta$ and \eqref{e:Acoercive}.
\end{proof}
From Proposition \ref{p:hprime} we immediately deduce
a monotonicity formula for $H_u$.
\begin{corollary}\label{c:H}
Let $u$ be a solution to the thin obstacle problem 
\eqref{e:ob prob} in $B_1$. Then, for all $x_0 \in B_1'$ and 
$0 < r_0 < r_1 < 1 - |x_0|$, we have
\begin{equation}\label{e:monotonia H}
\frac{H_u(x_0,r_1)}{r_1^{n}} = 
\frac{H_u(x_0,r_0)}{r_0^{n}}\,
\exp\left(\int_{r_0}^{r_1} \Big(2\frac{I_u(x_0,t)}{t} +\frac{\varepsilon_H(x_0,t)}{H_u(x_0,t)}\Big)\d t\right).
\end{equation}
In particular, if $A_1 \leq I(x_0,t) \leq A_2$ 
for every $t \in (r_0, r_1)$, then 
\begin{gather}
(r_0, r_1)\ni r\mapsto
e^{-C_{\ref{p:hprime}}r}\frac{H_u(x_0,r)}{r^{n+ 2A_2}}
\quad\text{is monotone decreasing},\label{e:H1}\\
(r_0, r_1)\ni r\mapsto e^{C_{\ref{p:hprime}}r}
\frac{H_u(x_0,r)}{r^{n+ 2A_1}}
\quad\text{is monotone increasing}.\label{e:H2}
\end{gather}
Moreover,  for all $0 < r < 1 - |x_0|$
\begin{align}\label{e:L2 vs H}
\frac r4\,H_u(x_0,r)\leq \int_{B_r(x_0)}|u|^2\,\d x\leq 
2\sqrt{1+L^2}\,e^{C_{\ref{p:hprime}}r}r\,H_u(x_0,r).
\end{align}
\end{corollary}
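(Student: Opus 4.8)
The idea is to read off all four assertions from the single differential identity for $H_u$ proved in Proposition~\ref{p:hprime}. Fix $x_0\in B_1'$. If $H_u(x_0,\cdot)$ vanishes at some radius $r$, then $u\equiv0$ on the annulus $B_r(x_0)\setminus B_{r/2}(x_0)$ (since $\vartheta>0$); by analyticity of $u$ in $B_1^\pm$ together with the Signorini conditions \eqref{e.signorini bc} and the even symmetry, this forces $u\equiv0$ and all the statements are trivial. So we may assume $H_u(x_0,\cdot)>0$ on $(0,1-|x_0|)$. By Proposition~\ref{p:hprime} the function $r\mapsto H_u(x_0,r)$ is locally Lipschitz (hence absolutely continuous) and satisfies, for a.e.\ $r$, $H_u'(x_0,r)=\frac nr H_u(x_0,r)+2D_u(x_0,r)+\varepsilon_H(x_0,r)$ with $|\varepsilon_H(x_0,r)|\le C_{\ref{p:hprime}}H_u(x_0,r)$. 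Dividing by $H_u(x_0,r)$ and using $I_u(x_0,r)=rD_u(x_0,r)/H_u(x_0,r)$ gives, for a.e.\ $r$,
\[
\frac{\d}{\d r}\log\Big(\frac{H_u(x_0,r)}{r^{n}}\Big)=\frac{2I_u(x_0,r)}{r}+\frac{\varepsilon_H(x_0,r)}{H_u(x_0,r)}\,.
\]
Integrating this identity between $r_0$ and $r_1$ and exponentiating yields \eqref{e:monotonia H}.

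For \eqref{e:H1} and \eqref{e:H2} it then suffices to estimate the right-hand side of the last display. If $A_1\le I_u(x_0,t)\le A_2$ on $(r_0,r_1)$, then using $|\varepsilon_H/H_u|\le C_{\ref{p:hprime}}$ one gets $\frac{\d}{\d r}\log\big(e^{-C_{\ref{p:hprime}}r}H_u(x_0,r)\,r^{-n-2A_2}\big)\le0$ and $\frac{\d}{\d r}\log\big(e^{C_{\ref{p:hprime}}r}H_u(x_0,r)\,r^{-n-2A_1}\big)\ge0$, which are precisely \eqref{e:H1} and \eqref{e:H2}. Taking $A_1=0$ (which is always admissible, since $I_u=rD_u/H_u\ge0$ because $D_u\ge0$) the same computation shows unconditionally that $r\mapsto e^{C_{\ref{p:hprime}}r}H_u(x_0,r)/r^{n}$ is nondecreasing; this crude growth bound is what I will feed into the proof of \eqref{e:L2 vs H}.

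For \eqref{e:L2 vs H}, recall from the shape of $\phi$ that $H_u(x_0,r)=2\int_{B_r(x_0)\setminus B_{r/2}(x_0)}\vartheta(x)\,u^2(x)\,|x-x_0|^{-1}\,\d x$. For the lower bound, on this annulus $|x-x_0|\ge r/2$ and $\vartheta\le1$, so $\int_{B_r(x_0)}u^2\ge\int_{B_r(x_0)\setminus B_{r/2}(x_0)}u^2\ge\frac r2\int_{B_r(x_0)\setminus B_{r/2}(x_0)}\vartheta\,u^2|x-x_0|^{-1}\,\d x=\frac r4\,H_u(x_0,r)$. For the upper bound set $\psi(t):=\int_{\partial B_t(x_0)}\vartheta\,u^2\,\d\cH^{n}$; by the coarea formula and \eqref{e:Acoercive} one has $\int_{B_r(x_0)}u^2\le\sqrt{1+L^2}\int_0^r\psi(t)\,\d t$, while $H_u(x_0,s)=2\int_{s/2}^{s}\psi(t)\,t^{-1}\,\d t\ge\frac2s\int_{s/2}^{s}\psi(t)\,\d t$. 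Splitting $(0,r)$ into the dyadic rings $(r2^{-k-1},r2^{-k})$, applying the latter bound with $s=r2^{-k}$, and inserting $H_u(x_0,r2^{-k})\le 2^{-kn}e^{C_{\ref{p:hprime}}r}H_u(x_0,r)$ (the case $A_1=0$ above), one obtains
\[
\int_0^r\psi(t)\,\d t\le\sum_{k\ge0}\frac{r2^{-k}}{2}\,H_u(x_0,r2^{-k})\le\frac r2\,e^{C_{\ref{p:hprime}}r}H_u(x_0,r)\sum_{k\ge0}2^{-k(n+1)}\le r\,e^{C_{\ref{p:hprime}}r}H_u(x_0,r)\,,
\]
where the last inequality uses $\sum_{k\ge0}2^{-k(n+1)}\le\frac43$ for $n\ge1$. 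Multiplying by $\sqrt{1+L^2}$ gives the upper bound in \eqref{e:L2 vs H}.

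The whole argument is essentially a one-variable ODE computation once Proposition~\ref{p:hprime} is available, so I do not expect real difficulties. The only step requiring a little bookkeeping is the upper bound in \eqref{e:L2 vs H}: the $L^2$-mass at small scales must be summed through the crude growth estimate for $H_u$ from the $A_1=0$ case, which is exactly what produces the exponential factor $e^{C_{\ref{p:hprime}}r}$. There one also uses the trivial bound $\psi(t)\le\|u\|_{L^\infty}^2\,\cH^{n}(\partial B_t(x_0))\le C\,t^{n}$, which legitimises the telescoping of $\int_0^r\psi$ over the dyadic rings, together with the positivity of $H_u(x_0,\cdot)$ fixed at the outset, needed for the divisions throughout.
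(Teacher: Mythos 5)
Your proof is correct and follows essentially the same route as the paper: integrate the ODE from Proposition~\ref{p:hprime} for \eqref{e:monotonia H}--\eqref{e:H2}, then for the upper bound in \eqref{e:L2 vs H} decompose $B_r(x_0)$ into dyadic annuli and feed in the crude growth bound for $H_u$ from \eqref{e:H2} with $A_1=0$. One cosmetic remark: the $L^\infty$ bound $\psi(t)\le C t^n$ is not what legitimises the telescoping --- the dyadic split is just a partition of $(0,r)$ and nonnegativity of $\psi$ makes the sum equal the integral; what makes the resulting series finite is the geometric factor $\sum_k 2^{-k(n+1)}$ coming from the growth estimate, exactly as in the paper.
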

\begin{proof}
The proof of \eqref{e:monotonia H}
(and hence of \eqref{e:H1} and \eqref{e:H2})
follows immediately from the differential equation \eqref{e:Hprime-A}.

The proof of the second inequality in \eqref{e:L2 vs H} is now a direct 
consequence of \eqref{e:Acoercive} as follows
\begin{align*}
\int_{B_r(x_0)}|u|^2\,\d x
&= \sum_{k\in\N} \int_{B_{\sfrac{r}{2^k}}
\setminus B_{\sfrac{r}{2^{k+1}}}(x_0)}|u|^2\,\d x\\
&\leq \sqrt{1+L^2}\sum_{k\in\N} \frac{r}{2^k}\,H_u\big(x_0,\sfrac{r}{2^k}\big)
\leq 2\sqrt{1+L^2}\, e^{C_{\ref{p:hprime}}r}r\,H_u(x_0,r),
\end{align*}
where in the last inequality we used that
$e^{C_{\ref{p:hprime}}s}H_u(x_0,s)\leq e^{C_{\ref{p:hprime}}r}H_u(x_0,r)$ for 
$s\leq r$ by \eqref{e:H2}. The opposite inequality is elementary
in view of \eqref{e:Acoercive}. 
\end{proof}

%
%

\bibliographystyle{plain}

\end{document}